\definecolor{blue}{RGB}{51,131,255}
\numberwithin{equation}{section}
\theoremstyle{plain}
	\newtheorem{Theo}{Theorem} 
	\newtheorem{Lemm}{Lemma}[section]
	\newtheorem{Prop}{Proposition}[section]
    \newtheorem{Variant}{Variant}[section]
\theoremstyle{definition}
	\newtheorem{Hypo}{Hypothesis}
\theoremstyle{remark}
	\newtheorem{Rema}{Remark}[section]
\renewcommand\emptyset{\varnothing}
\newcommand\N{{\mathbb N}}   
\newcommand\R{{\mathbb R}}
\newcommand\dd{\,{\textrm d}}
\renewcommand{\div}{\operatorname{div\,}}
\newcommand{\curl}{\operatorname{curl}}
\begin{document}
\title{Exact controllability of the vortex system \\ by means of a single vortex}
\date{\today}
\author[1]{Justine Dorsz \thanks{now at CIRED, AgroParisTechn Cirad, CNRS, EHESS, Ecole des Ponts Paristech, Université Paris-Saclay, Nogent-sur-Marne}}
\author[1]{Olivier Glass}
 
\affil[1]{CEREMADE, Université Paris Dauphine-PSL, Paris}

\maketitle
\begin{abstract}
In this paper, we investigate the controllability of the point vortex system by means of a single vortex. The point vortex system is a well-known simplified model for the incompressible Euler equation, where the vorticity is concentrated in a finite number of Dirac masses. We use one of the vortices as a control, and prove that by suitably choosing its trajectory, we can drive all other vortices to a given prescribed position in arbitrary time.
\end{abstract}
\tableofcontents
%
%
%
%
\section{Introduction}
\subsection{Presentation of the system}
In this paper, we investigate the point vortex system from the viewpoint of control theory.
The vortex system is a classical system in fluid dynamics, whose study originates back to as early as the XIXth century, in particular with works of Helmholtz, Kirchhoff, Kelvin and Poincar\'e. It is a system of ordinary differential equations that can been seen as a simplification of the 2D incompressible Euler (partial differential) equations:
\begin{align}
\partial_t u + (u\cdot\nabla)u + \nabla p & = 0, \label{e.continuity} \\
\text{div}\, u & = 0, \label{e.incomp}
\end{align}
where $u:\R^2 \rightarrow \R^2$ is the velocity field and $p:\R^2 \rightarrow \R$ is the pressure field, as the (scalar) vorticity
\begin{equation} \label{Vorticite}
\omega := \curl u,
\end{equation}
is concentrated on a finite number of points. Classical textbooks on the vortex system are for instance \cite{Lamb,MP,Newton}.
We recall that above, \eqref{e.continuity} represents the conservation of momentum while \eqref{e.incomp} represents the incompressibility constraint, and that the vorticity satisfies the transport equation
\begin{equation} \label{Eq:Vorticite}
\partial_{t} \omega + \div(u \omega) =0.
\end{equation}
To introduce the system, we recall that the velocity at a point $x$ in $\R^2$ generated by the vorticity field $\omega$ is given by the Biot-Savart law:
\begin{equation}\label{e.velo}
u(x) = \frac{1}{2\pi} \int_{\R^2} \frac{(x-y)^{\perp}}{|x-y|^2} \omega(y) \dd y,
\end{equation}
where $x^{\perp}$ stands for the image of $x$ by the rotation of angle $\displaystyle \frac{\pi}{2}$ in $\R^2$,
\begin{equation*}
 x^{\perp} = \begin{pmatrix} x^1 \\ x^2 \end{pmatrix} ^{\perp} = \begin{pmatrix} -x^2 \\ x^1 \end{pmatrix},
\end{equation*}
with $x^i$ the $i^{\textrm{th}}$ component of the vector $x$. 

Correspondingly, when the initial vorticity is concentrated on a finite number $N$ of distinct points of $\R^2$, $x_{1},\ldots,x_{N}$, called vortices, say
\begin{equation*}
\omega = \sum_{i=1}^N \gamma_i \delta_{x_{i}},
\end{equation*}
with $\delta_{x}$ the Dirac measure at the point $x$, and $\gamma_i \neq 0$ the intensity of the vortex located at $x_{i}$, we obtain the corresponding velocity field
\begin{equation*} 
u(x) = \frac{1}{2\pi} \sum_{i=1}^N \gamma_i \frac{(x-x_{i})^{\perp}}{|x-x_{i}|^2}.
\end{equation*}
The vortex system is obtained by neglecting self-interaction, that is, by assuming that each vortex moves under the influence of the velocity field generated by the other vortices only. Then \eqref{Eq:Vorticite} leads to the following system of ordinary differential equations: for $i$ in $\{1,\ldots,N\}$, the position of the $i$th vortex at time $t$ is given by
\begin{equation}\label{e.system}
\left\{ \begin{array}{rcl}
\displaystyle \frac{dx_i}{dt}(t)
&=& \displaystyle \sum_{j=1,\,j\ne i}^{N} \frac{\gamma_j}{2\pi} \frac{(x_i(t)-x_j(t))^{\perp}}{|x_i(t)-x_j(t)|^{2}}, \\
x_i(0) &=& x_{0,i},
\end{array} \right.
\end{equation}
where we have denoted by $x_{0,1}$, \dots, $x_{0,N}$ the initial positions of the vortices. \par
This system may be derived rigorously as the limit of the Euler equations \eqref{e.continuity} and \eqref{e.incomp} when the vorticity of the fluids concentrates on a finite number of points, see for instance \cite{MarchioroPulvirenti:1983,MarchioroPulvirenti:1993,Serfati}.
More recently, it has been proven that this system arises as the limit of an evolution system of small solids in an incompressible perfect fluid, see for instance \cite{GLS:2016} and \cite{GMS:2018}. \par

Let us now introduce the control problem that we investigate on this system. We wish to establish controllability properties of System \eqref{e.system} by means of one of the vortices, or equivalently when we add a vortex of intensity $\gamma^c \neq 0$, whose trajectory we completely control. The dynamics thus reads as follows, for $i$ in $\{1,\ldots,N\}$,

\begin{equation} \label{e.intromain}
\left\{ \begin{array}{rcl}
\displaystyle \frac{dx_i}{dt}(t) 
&=&  \displaystyle \sum_{j=1,\,j\ne i}^{N} \frac{\gamma_j}{2\pi} \frac{(x_i(t)-x_j(t))^{\perp}}{|x_i(t)-x_j(t)|^{2}} 
+ \frac{\gamma^c}{2\pi} \frac{(x_i(t)-y(t))^{\perp}}{|x_i(t)-y(t)|^{2}}, \\
x_i(0) &=& x_{0,i},
\end{array} \right.
\end{equation}
where $y(t)$ is taken as control parameter. This is somewhat reminiscent of the use of some coordinates of a system as a control (see e.g. \cite{Bressan:Impulsive} and references therein), however the rather simple structure of System~\eqref{e.intromain} do not bring us to follow this approach. \par
The question that we raise in this paper is the one of global controllability of System~\eqref{e.intromain}. More precisely, given an arbitrary time $T>0$, initial positions for the vortices $x_{10}$, \dots, $x_{N0}$ and final ones  $x_{1f}$, \dots, $x_{Nf}$, we investigate the possibility of choosing a relevant trajectory $y$ for the controlled vortex, such that the corresponding solution of \eqref{e.intromain} departing from $x_{10}$, \dots, $x_{N0}$ is globally defined in $[0,T]$ and reaches $x_{1f}$, \dots, $x_{Nf}$ at time $T$. \par
The motivation comes from control theory for fluid mechanics that has drawn a large literature in the last thirty years, in particular since the celebrated conference by J.-L. Lions \cite{Lions:AreThere}. Recent progress in the field can be found for instance in \cite{CoronMarbachSueur} where one can find many references concerning works in the subject. In most results in the field, the control takes the form either of an {\it interior} control or a {\it boundary} control. This leads in general to an infinite-dimensional control space (though there are also results using a finite-dimensional, not space-localized control space, such as in the recent work \cite{Nersesyan} --- see also the references therein). Here, by simplifying the model, we may use a much simpler and low-dimensional control to achieve our goal. \par
The motivation for using a single vortex as a control also comes from the theory of fluid-solid interactions. Vortex models can indeed also be obtained as the limit of the evolution of solids in a perfect fluid as the radius of these solids shrink to zero (see \cite{GLS:2016,GMS:2018}). This opens a perspective for the problem of controlling solids inside a fluid {\it by means of another solid}. This seems a quite natural way to apply a control to a fluid system. Recent results on the control of solids immersed in a perfect fluid by means of a boundary control are given in \cite{GKS:2020,GKS:2021}, see also \cite{BoulakiaGuerrero,ImanuvilovTakahashi} for viscous Newtonnian fluids. \par
We finally mention that vortex control was also considered in \cite{VainchteinMezic} (see also references therein): here the perspective is a bit different, since the control is an external, but small, field. \par
%
%
%
%
%
%

\subsection{Main result}
Our main result is as follows.
\begin{Theo} \label{Thm:Main}
System \eqref{e.ode1control} is exactly controllable in arbitrary time, that is to say: given $T>0$, given two $(N+1)$-tuples of distinct points in $\R^{2}$, say
$(x_{0,1}, \dots, x_{0,N},y_{0})$ and $(x_{f,1}, \dots, x_{f,N},y_{f})$, there exists $y \in C^\infty([0,T];\R^2)$ satisfying $y(0)=y_{0}$, $y(T)=y_{f}$, and such that the corresponding solution of \eqref{e.intromain} is defined in $[0,T]$ and satisfies
\begin{equation} \label{target}
(x_{1}(T), \dots, x_{N}(T)) = (x_{f,1}, \dots, x_{f,N}) .
\end{equation}
\end{Theo}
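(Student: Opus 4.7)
The plan is to prove global exact controllability via a path-following argument, relying on a local controllability result at each configuration of distinct vortices. I would organize the proof in three main steps.

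\textbf{Step 1: connecting path in configuration space.} The set $\mathcal{C}_N := \{(x_1, \dots, x_N) \in (\R^2)^N : x_i \neq x_j \text{ for all } i \neq j\}$ is an open, path-connected subset of $(\R^2)^N$. I would therefore start by choosing a smooth curve $\Gamma : [0,T] \to \mathcal{C}_N$ joining $(x_{0,1}, \dots, x_{0,N})$ to $(x_{f,1}, \dots, x_{f,N})$, together with a smooth auxiliary curve $\bar y : [0,T] \to \R^2$ from $y_{0}$ to $y_{f}$ whose image stays clear of the components of $\Gamma$.

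\textbf{Step 2: local controllability.} The heart of the proof is a local statement: given any reference configuration $X^* \in \mathcal{C}_N$ and any reference control position $y^* \notin \{x_1^*, \dots, x_N^*\}$, and for any sufficiently small $\tau > 0$, there is a neighborhood $V$ of $X^*$ in $\mathcal{C}_N$ such that any two states $X^{\mathrm{in}}, X^{\mathrm{out}} \in V$ can be connected in time $\tau$ by a smooth control $y(\cdot)$ satisfying $y(0) = y(\tau) = y^*$. To prove this I would fix a suitable reference trajectory $y_{\mathrm{ref}}(\cdot)$ on $[0,\tau]$ and consider a finite-dimensional family of perturbations concentrated in $N$ disjoint subintervals, during each of which $y$ performs a brief, well-chosen excursion near one specific vortex $x_i$. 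The analysis of \eqref{e.intromain} near a near-collision shows that such an excursion produces, at leading order, an (essentially) two-dimensional displacement of $x_i$ alone, the effect on the other vortices and on the endpoint $y(\tau)$ being of higher order. I would then show that the differential of the endpoint map with respect to this finite-dimensional family is surjective onto $\R^{2N}$ and conclude by the implicit function theorem.

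\textbf{Step 3: stitching and boundary conditions.} By compactness of $[0,T]$, the curve $\Gamma$ is covered by finitely many neighborhoods of the type provided by Step 2. Subdividing $[0,T]$ into $0 = t_0 < t_1 < \cdots < t_K = T$ fine enough that $\Gamma(t_{k-1}), \Gamma(t_k)$ lie in a common such neighborhood, I apply the local result on each $[t_{k-1}, t_k]$; concatenating the resulting local controls, and smoothing the junctions, yields a global smooth control realizing \eqref{target}. The boundary conditions $y(0) = y_{0}$ and $y(T) = y_{f}$ are handled by prepending a short piece during which $y$ is driven from $y_{0}$ to the reference position $y^*$ of the first block (and symmetrically at time $T$); the small perturbation that this induces on the $x_i$ dynamics is absorbed by a last application of Step 2.

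The main obstacle is clearly Step 2. The control enters the dynamics nonlinearly through the Biot--Savart kernel, and the influences on different vortices are coupled: extracting $2N$ independent infinitesimal directions from a two-dimensional control trajectory requires a careful quantitative analysis of the leading-order effect of close encounters between $y$ and an individual vortex, and a precise control of the error terms. An alternative would be a Chow-type argument on the extended system $(\dot X, \dot y) = (F(X,y), u)$ with $u$ the effective control, but establishing the Lie-algebra rank condition at every admissible state seems no less delicate than the direct approach outlined above.
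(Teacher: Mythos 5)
Your overall architecture (local controllability along a path in the configuration space, then compactness and stitching) is reasonable, but the entire mathematical content of the theorem is concentrated in your Step 2, which you leave as an unproven assertion. The claim that the differential of the endpoint map with respect to your finite-dimensional family of excursions is surjective onto $\R^{2N}$ is precisely the difficulty, and as stated it runs into a quantitative obstruction that your sketch does not acknowledge. The velocity imparted to $x_i$ by the control at distance $d$ is of size $|\gamma^c|/d$ (and is \emph{perpendicular} to $x_i-y$, so at a fixed control position you only ever generate one direction); hence an excursion of duration $s$ at distance $d$ displaces $x_i$ by roughly $|\gamma^c|\,s/d$. For the effect on the \emph{other} vortices to be of higher order you need $d\ll\min_{j\neq i}|x_i-x_j|$, but then the displacement of $x_i$ over your fixed subinterval of length $\sim\tau/N$ is bounded \emph{below} by $|\gamma^c|\tau/(N\min_j|x_i-x_j|)$; conversely, producing the arbitrarily small displacements required by the implicit function theorem forces $d$ to be large, in which regime the control influences all vortices comparably and the claimed decoupling fails. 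So the time step, the size of the neighborhood $V$, and the distance of the excursion must be tied together by a scaling relation (in the paper this appears as $\tau\sim D^2$ and control distance $\sim D$ for a displacement $D$), and without it neither the surjectivity nor the ``higher order'' error bounds can be established. You also never address global existence: during each excursion the control approaches a vortex whose speed then blows up like $1/d$, and you must verify that no collision between the $N+1$ points occurs — this is part of the statement being proved, not a formality.

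For comparison, the paper circumvents the linearization entirely. It first solves the problem with $N$ control vortices by \emph{exactly inverting} the map $\bm{y}\mapsto F_{\bm{x}}(\bm{y})$ (each control sits near its dedicated vortex and the singular self-pair term dominates, with the coupling handled by a perturbation of the inverse function theorem); it then replaces the $N$ controls by a single rapidly oscillating one à la Filippov, proving via a quantitative weak convergence estimate that the resulting trajectory is $O(1/n)$-close to the reference — this gives \emph{approximate} controllability; finally it upgrades to exact controllability not by an implicit function theorem but by a topological degree argument (Lemma~\ref{t.topo}) applied to the endpoint map as a function of the \emph{target}, which only requires continuity and the $O(1/n)$ bound. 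Your sequential excursions are close in spirit to the oscillation step, but the averaging analysis and the degree-theoretic closing argument, which are what make the scheme rigorous, are absent from your proposal.
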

As is classical, a solution of the vortex system is defined as long as vortices do not meet. It is hence a part of the statement of Theorem~\ref{Thm:Main} that the corresponding trajectories of the various vortices (including the one located at $y(t)$) do not cross. Recall that the vortex system can naturally blow up in finite time when the vortices do not have all the same sign (see e.g. \cite{MP}).
%
%
%
%
%
%

\subsection{Notations}
In order to simplify the notations, we omit the factor $1/2\pi$ of the evolution equation describing the dynamics of the system throughout this work, incorporating it in the intensity coefficients $\gamma_{i}$, $\gamma_{c}$. \par
For $x$ in $\R^2$ and for $\bm{x}$ in $(\R^2)^N$, $|x|$ and $\|\bm{x}\|$ denote respectively their Euclidean norm in $\R^2$ and in $(\R^2)^N$. \par
We introduce the following notations for sets of positions of the vortices. 
For $\bm{R} = (R_1,\ldots, R_N)$ in $(\R_+^*)^N$, we denote $D_{\bm{R}}(\bm{x})$ 
\begin{equation} \label{n.ball}
D_{\bm{R}}(\bm{x}):= \left[ B_{R_1}(x_1)\setminus \{x_1\} \right] \times \cdots \times \left[ B_{R_N}(x_N) \setminus \{ x_N\} \right].
\end{equation}
We extend this notation to $D_{\bm{\infty}}(\bm{x}) := \left( \R^2\setminus \{x_1\} \right) \times \cdots 
\times \left( \R^2\setminus \{x_N\} \right)$.
We also introduce 
\begin{equation} \label{not:va}
Y_{\bm{a}} := \left( \R^2\setminus \overline{B_{a_1}(0)} \right) \times \dots \times \left( \R^2\setminus \overline{B_{a_N}(0)} \right).
\end{equation}
Finally, we will denote by $K$ the Biot-Savart kernel:
\begin{equation} \label{Biot-Savart}
K: \left\{ \begin{array}{rcl}
  \R^2\setminus\{0\} & \longrightarrow & \R^2 \setminus \{0 \} \\
   x & \longmapsto & \displaystyle \frac{x^{\perp}}{|x|^2}.
\end{array} \right.
\end{equation}
%
%
%
%
%
%
\subsection{Organization of the paper}
The rest of the paper is devoted to the proof of Theorem~\ref{Thm:Main}. \par
This proof is split in two parts. In Section~\ref{s.onevortex}, we consider a simpler auxiliary problem, when the $N$ vortices are controlled by means of $N$ others.
To do so, we first consider the case $N=1$ in Subsection~\ref{Sub.SimplifiedCase}, and then the general one in Subsection~\ref{s.Nvortex}.
Hence we prove the exact controllability of the vortex system by means of $N$ control vortices, and, under more restrictive assumptions, the possibility to moreover localize the vortices and their control.

Next, Section~\ref{s.singlecontrol} treats the main result regarding the controllability of System \eqref{e.intromain} by means of a single control vortex.
Relying on the ideas of Filippov's convex integration, see \cite{Filippov:1967},
we establish that a single oscillating control playing the role of the $N$ vortices of Section~\ref{s.onevortex} suffices to get a result of exact controllability.
%
%
%
%
\section{Controllability of \texorpdfstring{$N$}{N} vortices by \texorpdfstring{$N$}{N} others}
\label{s.onevortex}
In this section, we consider a simpler problem by putting $N$ controls in the system instead of a single "control vortex".
With as many controls as vortices to control, the intuitive strategy is then to influence each vortex with one dedicated control.
We therefore consider the following control system: for $i$ in $\{1,\ldots,N\}$,
\begin{equation} \label{e.odeN}
\left\{
\begin{array}{rcl}
\displaystyle  \frac{dx_i}{dt}(t) &=&  \displaystyle \sum_{j=1,\,j\ne i}^{N} \gamma_j \frac{(x_i(t)-x_j(t))^{\perp}}{|x_i(t)-x_j(t)|^{2}}
+ \sum_{j=1}^{N} \gamma_j^c \frac{(x_i(t)-y_j(t))^{\perp}}{|x_i(t)-y_j(t)|^{2}},
\, \\
x_i(0) & = & x_{0,i},
\end{array}
\right.
\end{equation}
where the control is given by the $N$ trajectories $y_{1},\dots,y_{N}$. 
Above, $\gamma_1^c,\ldots,\gamma_N^c$, with $\gamma_i^c \neq 0$ for $i \in \{1,\ldots,N\}$, stand for the intensities of the vortices located at $y_1,\ldots, y_N$. 

Precisely, the goal of this section is to prove the following statement concerning the global controllability of $N$ vortices by means of $N$ others.
\begin{Theo} \label{t.NvortexNcontrols}
For all $\bm{x_{0}}= (x_{1,0}, \dots, x_{N,0})$ and $\bm{x_f}= (x_{f,1},\dots, x_{f,N}) $ in $(\R^{2})^N$, with $x_{0,i}\ne x_{0,j}$ and $x_{f,i}\ne x_{f,j}$ for $i,j$ in $\{1,\ldots,N\}$ and $i\ne j$, and for all $T>0$, it is possible to find a control $\bm{y} = (y_1,\ldots,y_N) \in C^{\infty}([0,T],\R^2)^{N}$ such that the corresponding solution of \eqref{e.odeN} is well-defined in $[0,T]$ and satisfies $\bm{x}(T)=\bm{x_f}$. \par
In particular the $2N$ points $x_{i}$, $i=1\ldots N$, and $y_{j}$, $j=1\ldots N$, avoid each other: for all $i, j$ in $\{1,\ldots,N\}$, for all $t$ in $[0,T]$, $x_i(t) \neq y_i(t)$, and for $j\neq i$ $x_i(t) \neq x_j(t)$ and $y_i(t) \neq y_i(t)$.
\end{Theo}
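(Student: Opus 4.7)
The strategy is to pair each vortex with a dedicated control: assign $y_i$ the task of forcing $x_i$ to follow a chosen reference trajectory $\eta_i$ from $x_{0,i}$ to $x_{f,i}$. If $x_i(t) = \eta_i(t)$ is to be a solution of \eqref{e.odeN}, then for each $i$ and $t$ one must have
\begin{equation*}
\gamma_i^c K(\eta_i(t) - y_i(t)) = w_i(t) - \sum_{j \neq i} \gamma_j^c K(\eta_i(t) - y_j(t)), \qquad w_i(t) := \dot\eta_i(t) - \sum_{j\neq i} \gamma_j K(\eta_i(t) - \eta_j(t)),
\end{equation*}
which I would regard as an implicit algebraic system for $(y_1(t), \ldots, y_N(t))$ at each $t$.

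First I would construct $\eta_i \in C^\infty([0,T], \R^2)$ satisfying $\eta_i(0) = x_{0,i}$, $\eta_i(T) = x_{f,i}$, with pairwise separation $|\eta_i(t) - \eta_j(t)| \geq d > 0$ for $i \neq j$, and a uniform speed bound $|\dot\eta_i(t)| \geq M$ for a parameter $M$ to be chosen large. Pairwise disjointness is easy since $\R^2$ is two-dimensional and the prescribed endpoints are already distinct; the large-speed property is then obtained by lengthening the curves through small disjoint detours that leave the endpoints and the pairwise separation unchanged. This generalizes the $N=1$ case (Subsection~\ref{Sub.SimplifiedCase}), where no cross-term appears and the implicit equation admits the explicit solution $y_1 = \eta_1 + \gamma_1^c \dot\eta_1^\perp / |\dot\eta_1|^2$.

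The implicit system is then solved by a contraction argument on $\prod_i \overline{B_\rho(\eta_i(t))}$ for some $\rho < d/2$. Using $|K(z)| = 1/|z|$ and the fact that $K: \R^2 \setminus \{0\} \to \R^2 \setminus \{0\}$ is a diffeomorphism, any candidate tuple $(y_j)_{j\neq i}$ in those balls makes the right-hand side of index $i$ of norm at least $M - C$, with $C$ depending only on $d$, $\rho$ and the intensities; inverting $K$ then produces a new $y_i$ at distance $|\gamma_i^c|/(M - C)$ from $\eta_i(t)$. For $M$ large enough this lies in $B_\rho(\eta_i(t))$ and the resulting map is a contraction, giving a unique fixed point which depends smoothly on $t$ by the implicit function theorem. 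The $y_i \in C^\infty([0,T], \R^2)$ so obtained remain pairwise distinct and distinct from the $\eta_j$'s, hence \eqref{e.odeN} is nonsingular on $[0,T]$ and $x_i = \eta_i$ is its solution by uniqueness.

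The main obstacle I expect is the joint calibration of parameters: building smooth reference curves $\eta_i$ that simultaneously meet the prescribed endpoint data, remain uniformly well-separated, and carry a uniformly large speed, while arranging that the speed $M$ dominates all bounded cross-coupling terms that appear in the implicit system. The large-speed condition is essential because it is exactly what forces each $y_i$ to live very close to its assigned target $\eta_i$, reducing the off-diagonal terms $K(\eta_i - y_j)$ for $j \neq i$ to small perturbations that the fixed-point argument can absorb.
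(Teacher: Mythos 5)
Your proposal is correct and follows essentially the same strategy as the paper: construct well-separated reference curves with uniformly large speed, and invert the map from control positions to vortex velocities by treating the cross-terms as a small perturbation of the explicitly invertible ``diagonal'' part (the paper applies the inverse function theorem to $\tilde{F}_{\bm{x}}^{-1}\circ F_{\bm{x}}$ where you run a contraction argument, but the mechanism is identical). The only point worth noting is that the paper's proof additionally arranges for the controls to start from arbitrarily prescribed initial positions $y_{0,i}$ via a preliminary fast transient phase; this is not demanded by the statement as written, so it is not a gap in your argument, but it is the feature that is actually exploited later in the reduction to a single control vortex.
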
 \par
In this section, we prove Theorem~\ref{t.NvortexNcontrols}, and give at the end a variant of it to localize the trajectories under restrictive assumptions.
The general idea of the proof is to place each control vortex $y_i$ very close to the vortex $x_i$ in order to ensure the control of the trajectory of $x_i$,
by relying on the fact the interaction between two close vortices leads to a dominant term in the dynamics of System~\eqref{e.odeN}.
The system in the case $N=1$ concerns only the interaction between a vortex and its control, which gives the dominant term in the case $N \geq 2$.
Therefore, we begin with the elementary case of a single vortex.
\subsection{A simple case: a single vortex controlled by another}
\label{Sub.SimplifiedCase}
We first establish preliminary results regarding the controllability of a single vortex under the action of one control, that is, we consider the case $N=1$. 
Hence we are reduced to the following system describing the evolution of the position of the vortex $x$, under the action of the control vortex $y$:
\begin{equation} \label{e.evolution1vortex}
\left\{
\begin{array}{rcl}
\displaystyle \frac{dx}{dt}(t) & =& \displaystyle  \gamma \frac{(x(t)-y(t))^{\perp}}{|x(t)-y(t)|^{2}}, \\
x(0) & =& x_0,
\end{array}
\right.
\end{equation}
with  $\gamma \neq 0$ and $x_0$ in $\R^2$. \par
In that case one can prove the following statement.
\begin{Prop} \label{t.onevortexdynamic}
For all $x_0$ and $x_f$ in $\R^2$, $T>0$ and $y_{0} \in \R^2 \setminus \{ x_{0} \}$, it is possible to find a control $y\in C^{\infty}([0,T],\R^2)$ satisfying $y(0)=y_{0}$, such that $x$ the corresponding solution of~\eqref{e.evolution1vortex} is well-defined (in particular $\displaystyle \min_{t \in [0,T]} |x(t) - y(t)| >0$) and satisfies $x(T)=x_f$.
\end{Prop}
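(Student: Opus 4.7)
The plan is to use a \emph{differential flatness} argument, inverting the dynamics directly. Observe that if $x:[0,T]\to\R^2$ is any smooth curve with $\dot x(t)\neq 0$ throughout, then the identity
\begin{equation*}
\dot x(t) \;=\; \gamma\,\frac{(x(t)-y(t))^\perp}{|x(t)-y(t)|^2}
\end{equation*}
can be solved pointwise for $y(t)$: taking Euclidean norms yields $|x(t)-y(t)|=|\gamma|/|\dot x(t)|$, and using $(v^\perp)^\perp=-v$ gives the explicit inversion formula
\begin{equation*}
y(t) \;=\; x(t) \,+\, \gamma\,\frac{\dot x(t)^\perp}{|\dot x(t)|^2}.
\end{equation*}
The proof therefore reduces to designing a suitable $C^\infty$ curve $x(\cdot)$; defining $y$ by the above formula then automatically produces a smooth control achieving the task.

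Before constructing $x$, I translate the initial condition $y(0)=y_0$ into a constraint on the initial velocity. Setting $d_0:=y_0-x_0\neq 0$, the inversion formula at $t=0$ forces $\dot x(0)=v_0:=-\gamma\,d_0^\perp/|d_0|^2$. It then remains to exhibit a curve $x\in C^\infty([0,T],\R^2)$ satisfying $x(0)=x_0$, $\dot x(0)=v_0$, $x(T)=x_f$, and $\dot x(t)\neq 0$ for every $t\in[0,T]$. This is straightforward in two space dimensions: for example, take a cubic Hermite interpolation matching position and velocity at $t=0$ and position (with any convenient nonzero velocity) at $t=T$, and, if needed, perturb it by an arbitrarily small smooth correction transverse to the current direction to ensure the derivative never vanishes.

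Once $x(\cdot)$ is fixed, define $y(\cdot)$ by the inversion formula. Since $\dot x$ is nowhere zero, $y$ inherits the $C^\infty$ regularity of $x$. By the choice of $v_0$, one has $y(0)=y_0$. Moreover, $|x(t)-y(t)|=|\gamma|/|\dot x(t)|>0$ for every $t\in[0,T]$, so the vortex and its control never collide, and by construction $x(\cdot)$ satisfies \eqref{e.evolution1vortex} with this control $y$. Cauchy--Lipschitz, applied away from the singularity of the Biot--Savart kernel, identifies this $x$ as the unique solution of the corresponding Cauchy problem, so $x(T)=x_f$ as required. The only point demanding attention is the elementary construction of the intermediate curve $x$ respecting the three constraints (two endpoints, one initial tangent) together with $\dot x\neq 0$; in $\R^2$ this is not a genuine obstacle, and the rest of the argument is a purely algebraic verification.
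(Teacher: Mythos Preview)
Your proposal is correct and follows essentially the same approach as the paper: prescribe a smooth curve $\Gamma$ (your $x$) with $\Gamma(0)=x_0$, $\Gamma(T)=x_f$, nonvanishing derivative, and initial velocity matching the constraint $y(0)=y_0$, then invert the Biot--Savart relation pointwise via $y(t)=\Gamma(t)+\gamma\,\dot\Gamma(t)^\perp/|\dot\Gamma(t)|^2$ (the paper packages this inversion as Lemma~\ref{t.onevortexstationnary}). The paper simply asserts that constructing such a $\Gamma$ is elementary, while you sketch a Hermite-interpolation-plus-perturbation route; otherwise the arguments coincide.
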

Before going into the proof of Proposition~\ref{t.onevortexdynamic}, we state an elementary lemma regarding the right-hand side of System~\eqref{e.evolution1vortex}.
\begin{Lemm} \label{t.onevortexstationnary}
Let $x$ in $\R^{2}$ and $\gamma$ in $\R^*$. The map $f_x: \R^2 \setminus \{ x \}  \longrightarrow \R^2 \setminus \{ 0\}$,
$y \longmapsto \gamma \frac{(x-y)^{\perp}}{|x-y|^2}$, is a $C^\infty$ diffeomorphism whose inverse is given by $f_x^{-1}:v \longmapsto x + \gamma \frac{v^{\perp}}{|v|^2}$.
\end{Lemm}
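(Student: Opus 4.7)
The statement is a purely algebraic verification, so the plan is simply to exhibit the inverse map and check compositions. The map $f_x$ is obviously $C^\infty$ on $\R^2 \setminus \{x\}$ since the denominator $|x-y|^2$ does not vanish there, and it takes values in $\R^2 \setminus \{0\}$ because the numerator $\gamma(x-y)^\perp$ is nonzero. Likewise, the candidate inverse $g: v \mapsto x + \gamma v^\perp/|v|^2$ is $C^\infty$ on $\R^2 \setminus \{0\}$ and takes values in $\R^2 \setminus \{x\}$.

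The whole proof then reduces to checking $g \circ f_x = \mathrm{Id}_{\R^2 \setminus \{x\}}$ and $f_x \circ g = \mathrm{Id}_{\R^2 \setminus \{0\}}$. The key identity behind both computations is that the rotation by $\pi/2$ squares to $-\mathrm{Id}$, i.e. $(z^\perp)^\perp = -z$ for every $z \in \R^2$. Concretely, given $y \in \R^2 \setminus \{x\}$, set $v := f_x(y) = \gamma(x-y)^\perp/|x-y|^2$. Then
\begin{equation*}
|v|^2 = \frac{\gamma^2}{|x-y|^2}, \qquad v^\perp = \frac{\gamma\bigl((x-y)^\perp\bigr)^\perp}{|x-y|^2} = -\frac{\gamma (x-y)}{|x-y|^2},
\end{equation*}
so that
\begin{equation*}
g(v) = x + \gamma\,\frac{v^\perp}{|v|^2} = x + \gamma \cdot \frac{-\gamma(x-y)/|x-y|^2}{\gamma^2/|x-y|^2} = x - (x-y) = y.
\end{equation*}

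The reverse composition $f_x \circ g = \mathrm{Id}$ is obtained by the same manipulation, since $g$ has exactly the same structure as $f_x$ up to the translation by $x$: given $v \in \R^2\setminus\{0\}$, set $y := g(v)$, so that $y-x = \gamma v^\perp/|v|^2$, and the identical computation yields $f_x(y) = v$. Since $f_x$ admits a $C^\infty$ two-sided inverse, it is a $C^\infty$ diffeomorphism. There is no real obstacle here; the only point worth stressing is that the formula for $f_x^{-1}$ is essentially $f_x$ itself (up to the translation by $x$ and the sign absorbed by $\perp\circ\perp = -\mathrm{Id}$), which reflects the well-known involutive nature of the Biot--Savart kernel $K$ defined in \eqref{Biot-Savart}, namely $K \circ K = -\mathrm{Id}$.
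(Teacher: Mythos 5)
Your proof is correct and complete; the paper itself dismisses this lemma with "The proof is straightforward," and your direct verification of both compositions via the identity $(z^\perp)^\perp = -z$ is exactly the intended argument. The closing observation that $K\circ K = -\mathrm{Id}$ is a nice (and accurate) way to package why the inverse has the same form as the map itself.
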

The proof is straightforward. 
We can now prove Proposition~\ref{t.onevortexdynamic}.
\begin{proof}[Proof of Proposition~\ref{t.onevortexdynamic}]
We first introduce a $C^{\infty}$ curve $\Gamma:[0,T] \rightarrow \R^2$ satisfying the conditions
\begin{equation}\label{Eq:CondGamma}
\begin{gathered} 
\Gamma(0) = x_{0} \ \text{ and } \ \Gamma(T)=x_{f}, \\
\forall t \in [0,T], \ \dot{\Gamma}(t) \neq 0, \\
\dot{\Gamma}(0) = \gamma \frac{(x_{0}-y_{0})^{\perp}}{|x_{0}-y_{0}|^{2}}.
\end{gathered}
\end{equation}
It is elementary to construct such a curve. Now we define the control according to Lemma~\ref{t.onevortexstationnary}:
\begin{equation} \label{e.yexplicit}
y(t):= f^{-1}_{\Gamma(t)} (\dot{\Gamma}(t)) =\Gamma(t) + \gamma \frac{\dot{\Gamma}(t)^\perp}{|\dot{\Gamma}(t)|^2} \ \text{ for } \ t \in [0,T]. 
\end{equation}
The previous conditions ensure that $y \in C^{\infty}(\left[0,T\right],\R^2)$ and $y(0)=y_{0}$.\\
Now replacing the control in \eqref{e.evolution1vortex} by the expression \eqref{e.yexplicit}, we see that $\Gamma$ satisfies System~\eqref{e.evolution1vortex}, and hence coincides with its unique solution $x$. This ensures in particular that $x(T) = x_f$. 
\end{proof}
\begin{Rema}
We actually prove a stronger statement, since we can actually make $x$ follow any trajectory satisfying the conditions \eqref{Eq:CondGamma}. 
\end{Rema}
We now state a variant of Proposition~\ref{t.onevortexdynamic} that will play an important role in the proof of Theorem~\ref{Thm:Main}. The idea is that, if we can moreover choose the starting point $y_{0}$ of the control $y$, then one can improve the description of the control trajectory $y$ and of the controlled vortex $x$.
\begin{Variant} \label{Var:onevortex}
Let $x_{0} \neq x_f$ in $\R^2$, $T>0$ be given.
Then one can find a control $y\in C^{\infty}([0,T],\R^2)$ such that the corresponding solution $x$ of~\eqref{e.evolution1vortex} and the control $y$ have disjoint straight-lined trajectories
in $\overline{B}(x_{0},|x_{f}-x_{0}|)$ and $\overline{B}(y(0),|x_{f}-x_{0}|)$, respectively. Moreover for $x_f$ sufficiently close to $x_0$ (depending on $T$), the balls $B(x_0, 4|x_f - x_0|)$ and $B(y(0), 4|x_f - x_0|)$ do not intersect.
\end{Variant}
\begin{proof}
We let $x$ follow the straight line from $x_0$ to $x_f$ at constant speed and correspondingly define: 
\begin{equation*}
\Gamma:t \mapsto x_0 + \frac{t}{T}(x_f-x_0).
\end{equation*}
Then as in Lemma~\ref{t.onevortexstationnary} we define $y$ by \eqref{e.yexplicit}, which gives a straight-line trajectory for $y$ with moreover
\begin{equation} \label{EsXY} 
|x(t) - y(t)| = \frac{\gamma T}{|x_{f} - x_{0}|} \ \text{ and } \ | y(t) - y_{0}| \leq |y(T) - y(0)| = |x_f-x_0|. 
\end{equation}
%
Moreover, if we have
\begin{equation} \label{x0xfproches}
|x_f - x_0|^2 \leq \frac{|\gamma|T}{8},
\end{equation} 
then $|x_0 - y(0)| = \frac{|\gamma| T}{|x_f - x_0|} \geq 8|x_f - x_0|$ and the disjunction of the balls $B(x_0, 4|x_f - x_0|)$ and $B(y(0), 4|x_f - x_0|)$ follows.
\end{proof}
\subsection{The general case of \texorpdfstring{$N$}{N} vortices controlled by \texorpdfstring{$N$}{N} other vortices}
\label{s.Nvortex}
In this subsection we extend the results of Subsection \ref{Sub.SimplifiedCase} to $N \geq 2$: we consider the general case of $N$ vortices $\bm{x} = (x_1,\ldots, x_N)$, controlled by $N$ others, $\bm{y} = (y_1,\ldots, y_N)$. The dynamics of the complete system is given by Equation~\eqref{e.odeN} \par
Let us rewrite System~\eqref{e.odeN} as follows: for $i$ in $\{1,\ldots,N\}$,
\begin{equation}\label{e.odeN2}
\left\{
\begin{array}{rcl}
\displaystyle \frac{dx_i}{dt}(t) &=& \displaystyle F_{\bm{x},i}(\bm{y}) (t),
\, \\
x_i(0) &=& x_{0,i},
\end{array}
\right.
\end{equation}
where the right-hand side
$F_{\bm{x}}= ( F_{\bm{x},1}(\bm{y}),\dots,F_{\bm{x},N}(\bm{y}) )$ 
is given for $i$ in $\{1,\ldots,N\}$ by
\begin{equation} \label{e.Fintro}
F_{\bm{x},i}(\bm{y}) := \sum_{j=1,\,j\neq i}^{N} \gamma_j \frac{(x_i-x_j)^{\perp}}{|x_i-x_j|^{2}} + \sum_{j=1}^{N} \gamma_j^c\frac{(x_i-y_j)^{\perp}}{|x_i-y_j|^{2}},
\end{equation}
for ${\bm x}, {\bm y} \in (\R^{2})^N$ such that $x_{j} \neq x_{k}$ for $j \neq k$ and $x_{j} \neq y_{k}$ for all $j$ and $k$. \par
\ \par
Before proving Theorem~\ref{t.NvortexNcontrols}, we first consider a time-independent problem reminiscent of Lemma~\ref{t.onevortexstationnary}. We first establish a regularity result regarding a simplified version $\tilde{F}_{\bm{x}}$ of $F_{\bm{x}}$, then we show how $\tilde{F}_{\bm{x}}$ approximates the right-hand side of \eqref{e.odeN2} when each control $y_i$ is very close to the vortex $x_i$ for $i$ in $\{1,\dots,N\}$. This allows to establish the surjectivity of the mapping $F_{\bm{x}}$ onto some subset of $(\R^{2})^N$. We recall the notations \eqref{NotBall1}, \eqref{n.ball}, and \eqref{not:va}.
\begin{Lemm} \label{t.yexplicit.dimN}
Let $(x_1,\ldots,x_N)\in (\R^{2})^N$, with $x_i\ne x_j$ for $i,j\in \{1,\ldots,N\}$ and $i\ne j$. Let
\begin{equation*}
\renewcommand{\arraystretch}{1.5}
\begin{array}{rcl}
\tilde{F}_{\bm{x}}:& D_{\infty}(\bm{x})  &\longrightarrow  D_{\infty}(\bm{0})  \\
 & (y_1,\ldots,y_N) & \longmapsto \left( \displaystyle  \gamma_1^c \frac{(x_1-y_1)^{\perp}}{|x_1-y_1|^2},\ldots,\gamma_N^c \frac{(x_N-y_N)^{\perp}}{|x_N-y_N|^2} \right).
\end{array}
\end{equation*}
Then $\tilde{F}_{\bm{x}}$ is a $C^\infty$-diffeomorphism. Moreover for $\bm{r}=(r_1,\cdots,r_N) \in (\R_+^*)^N$, setting $\bm{a}:= \left(\displaystyle |\gamma_1^c|\frac{1}{r_1},\cdots,\displaystyle |\gamma_N^c|\frac{1}{r_N}\right)$, we have 
\begin{equation*}
\tilde{F}_{\bm{x}}\left( D_{\bm{r}}(\bm{x})\right) = Y_{\bm{a}}.
\end{equation*}
\end{Lemm}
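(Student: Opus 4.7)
The plan is to observe that $\tilde{F}_{\bm{x}}$ decomposes as a Cartesian product of $N$ independent maps, each acting on a single coordinate $y_i$, and then to invoke Lemma~\ref{t.onevortexstationnary} componentwise. Indeed, writing $\tilde{F}_{\bm{x}} = (f_{x_1},\ldots,f_{x_N})$ where $f_{x_i}(y_i) := \gamma_i^c (x_i - y_i)^\perp / |x_i-y_i|^2$ (that is, the map of Lemma~\ref{t.onevortexstationnary} with parameter $\gamma_i^c$), the first part of the statement is immediate: each $f_{x_i}$ is a $C^\infty$-diffeomorphism from $\R^2 \setminus \{x_i\}$ to $\R^2 \setminus \{0\}$, and hence their product is a $C^\infty$-diffeomorphism from $D_{\infty}(\bm{x})$ onto $D_{\infty}(\bm{0})$.

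For the second part, I would reduce once again to a coordinatewise computation. The key observation is the elementary identity
\begin{equation*}
\bigl| f_{x_i}(y_i) \bigr| = \frac{|\gamma_i^c|}{|x_i - y_i|},
\end{equation*}
which follows directly from the definition and the fact that taking the orthogonal does not change the norm. Therefore $y_i \in B_{r_i}(x_i) \setminus \{x_i\}$ is equivalent to $|f_{x_i}(y_i)| > |\gamma_i^c|/r_i = a_i$, i.e., to $f_{x_i}(y_i) \in \R^2 \setminus \overline{B_{a_i}(0)}$. Taking the Cartesian product over $i \in \{1,\ldots,N\}$ yields $\tilde{F}_{\bm{x}}(D_{\bm{r}}(\bm{x})) = Y_{\bm{a}}$.

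There is essentially no obstacle here: the lemma is a purely structural consequence of the product form of $\tilde{F}_{\bm{x}}$ together with Lemma~\ref{t.onevortexstationnary}. The only thing to be careful about is the correct translation between radii in the $\bm{y}$-space (the $r_i$) and in the image (the $a_i = |\gamma_i^c|/r_i$), which inverts because $f_{x_i}$ sends a small punctured ball around $x_i$ to the complement of a large ball around $0$.
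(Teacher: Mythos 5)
Your proof is correct and follows essentially the same route as the paper: invert componentwise via Lemma~\ref{t.onevortexstationnary}, then use the identity $|f_{x_i}(y_i)| = |\gamma_i^c|/|x_i-y_i|$ to match $D_{\bm{r}}(\bm{x})$ with $Y_{\bm{a}}$. Your version is if anything slightly more explicit than the paper's, which simply asserts the two inclusions $\tilde{F}_{\bm{x}}(D_{\bm{r}}(\bm{x})) \subset Y_{\bm{a}}$ and $\tilde{F}_{\bm{x}}^{-1}(Y_{\bm{a}}) \subset D_{\bm{r}}(\bm{x})$.
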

\begin{proof}[Proof of Lemma~\ref{t.yexplicit.dimN}]
The mapping $\tilde{F}_{\bm{x}}$ clearly is of class $C^\infty$ and it is elementary to check with Lemma~\ref{t.onevortexstationnary} that it is invertible and that its inverse $\tilde{F}_{\bm{x}}^{-1}$ is given by:
\begin{equation} \label{eq.Ftildeinv}
\begin{array}{rcl}
\tilde{F}_{\bm{x}}^{-1}:&  D_{\infty}(\bm{0})  &\longrightarrow   D_{\infty}(\bm{x}) \\
 & (v_1,\ldots,v_N) & \longmapsto \left(x_1 + \gamma_1^c \displaystyle \frac{v_1^{\perp}}{|v_1|^2},\ldots,x_N + \gamma_N^c \displaystyle \frac{v_N^{\perp}}{|v_N|^2} \right).
\end{array}
\end{equation}
Moreover we have immediately $\tilde{F}_{\bm{x}}\left( D_{\bm{r}}(\bm{x})\right) \subset Y_{\bm{a}}$ and $\tilde{F}^{-1}_{\bm{x}}(Y_{\bm{a}}) \subset D_{\bm{r}}(\bm{x})$ with $\bm{a}$ defined above.
\end{proof}
We can now deduce the following proposition on the mapping $F_{\bm{x}}$ introduced in \eqref{e.Fintro}. \par 
\begin{Prop} \label{t.nvorticesautonomous}
Let $(x_1,\ldots,x_N)\in (\R^{2})^N$, with $x_i\ne x_j$ for $i,j \in \{1,\ldots,N\}$ and $i\ne j$.
There exists $\bm{a}$ in $(\R_+^*)^N$ depending only on $(\gamma_{i})_{i=1..N}$, $(\gamma_{i}^c)_{i=1..N}$ and $\min_{i \neq j} |x_{i} - x_{j}|$,
with $\bm{a}$ decreasing as $\min_{i \neq j} |x_{i} - x_{j}|$ increases, such that for all $\bm{v}$ in  $Y_{\bm{a}}$, there exists $\bm{y} \in \R^{2N}$ such that $F_{\bm{x}}(\bm{y})=\bm{v}$. 
Moreover $F_{\bm{x}}$ realizes a $C^1$ diffeomorphism from some neighborhood $\mathcal{V}_{\bm x}$ of $\bm x$ in $\R^{2N}$ to $Y_{\bm{a}}$, and there exists $K_a>0$ depending on $\bm{a}$ such that for all $\bm{v}$ in $Y_{\bm{a}}$ and $\bm{\tilde{y}}$ in $\mathcal{V}_{\bm x}$, 
\begin{equation} \label{eq.iter}
\left\| \bm{\tilde{y}} - F_{\bm{x}}^{-1}(\bm{v}) \right\| \leq K_a \left\|\tilde{F}_{\bm{x}}^{-1}(\bm{v}) - \tilde{F}^{-1}_{\bm{x}} \circ F_{\bm{x}}(\bm{\tilde{y}}) \right\|.
\end{equation}
\end{Prop}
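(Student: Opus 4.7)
The natural decomposition to exploit is $F_{\bm{x}} = \tilde{F}_{\bm{x}} + G_{\bm{x}}$, where the \emph{diagonal self-interaction part} $\tilde F_{\bm x}$ was inverted explicitly in Lemma~\ref{t.yexplicit.dimN}, and the \emph{remainder}
\begin{equation*}
G_{\bm{x},i}(\bm{y}) = \sum_{j\ne i} \gamma_j K(x_i-x_j) + \sum_{j\ne i} \gamma_j^c K(x_i-y_j)
\end{equation*}
collects all cross-interactions. Set $r_0 := \tfrac14 \min_{i\ne j}|x_i - x_j|$ and restrict $\bm{y}$ to $D_{\bm{r_0}}(\bm{x})$. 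There, each $|x_i - y_j|$ with $j\ne i$ is bounded below by $2r_0$, so $G_{\bm{x}}$ is smooth, and both $\|G_{\bm{x}}\|_\infty$ and the Lipschitz constant of $G_{\bm{x}}$ are bounded by a constant $M$ depending only on $(\gamma_i),(\gamma_i^c)$ and $\min_{i\ne j}|x_i - x_j|$; note that enlarging $\min_{i\ne j}|x_i - x_j|$ makes $M$ smaller.

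Solving $F_{\bm{x}}(\bm y) = \bm v$ is then equivalent to the fixed-point equation
\begin{equation*}
\bm{y} = \Phi_{\bm v}(\bm y) := \tilde{F}_{\bm{x}}^{-1}\bigl(\bm{v} - G_{\bm{x}}(\bm{y})\bigr).
\end{equation*}
For $\bm{v}\in Y_{\bm{a}}$ with $a_i > M + |\gamma_i^c|/r_0$, each coordinate of $\bm{v}-G_{\bm{x}}(\bm{y})$ has modulus at least $|\gamma_i^c|/r_0$, so (by the explicit formula \eqref{eq.Ftildeinv}) $\Phi_{\bm v}$ maps $D_{\bm{r_0}}(\bm{x})$ into itself. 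Moreover $\tilde F_{\bm x}^{-1}$ has Jacobian of size $O(|\gamma_i^c|/|w_i|^2)$, so on $Y_{\bm a}$ its Lipschitz constant is $O(\max_i |\gamma_i^c|/a_i^2)$. Composed with the Lipschitz map $G_{\bm x}$, this gives $\Phi_{\bm v}$ a Lipschitz constant $\leq C M\max_i |\gamma_i^c|/a_i^2$. Choosing each $a_i$ large enough (depending only on the permitted quantities) makes this $\leq 1/2$, and Banach's theorem yields a unique fixed point $\bm{y}^* \in D_{\bm{r_0}}(\bm{x})$, hence the surjectivity statement and, via uniqueness, injectivity on a neighbourhood.

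For the local diffeomorphism property, differentiate: $DF_{\bm{x}}(\bm{y}) = D\tilde F_{\bm x}(\bm{y})\bigl(I + D\tilde F_{\bm x}(\bm{y})^{-1} DG_{\bm{x}}(\bm{y})\bigr)$. The operator $D\tilde F_{\bm x}^{-1}$ is small on $Y_{\bm a}$ (for $\bm a$ large as above) while $DG_{\bm{x}}$ is bounded, so the parenthesised factor is invertible with norm close to $1$, and the inverse function theorem provides a neighbourhood $\mathcal V_{\bm x}$ on which $F_{\bm x}$ is a $C^1$-diffeomorphism onto $Y_{\bm a}$.

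Finally, the inequality \eqref{eq.iter} follows from the same perturbation-of-identity argument applied to the map $\Phi := \tilde F_{\bm x}^{-1}\circ F_{\bm x}$. Writing $\bm{y}^* = F_{\bm{x}}^{-1}(\bm v)$, the right-hand side of \eqref{eq.iter} is exactly $\|\Phi(\bm y^*) - \Phi(\bm{\tilde y})\|$, and since $D\Phi = D\tilde F_{\bm x}^{-1}\,DF_{\bm x} = I + D\tilde F_{\bm x}^{-1}\,DG_{\bm x}$ differs from the identity by an operator of norm $\leq 1/2$ (for $\bm a$ large enough), $\Phi$ is bi-Lipschitz and $\Phi^{-1}$ has Lipschitz constant $K_a \leq 2$. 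The main obstacle to watch is bookkeeping the constants: one must check at each step that the thresholds for $\bm a$ depend only on the $(\gamma_i)$, $(\gamma_i^c)$ and $\min_{i\ne j}|x_i-x_j|$, which is transparent from the elementary bounds on $G_{\bm x}$ and on $\tilde F_{\bm x}^{-1}$ on $Y_{\bm a}$, and of course increasing $\min_{i\ne j}|x_i-x_j|$ decreases all the relevant bounds, so $\bm a$ can indeed be taken decreasing in this quantity.
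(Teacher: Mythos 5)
Your proof is essentially correct but follows a genuinely different route from the paper's. Both arguments rest on the same decomposition $F_{\bm{x}}=\tilde F_{\bm{x}}+G_{\bm{x}}$ and on the fact that the self-interaction dominates when each $y_i$ is close to $x_i$; but where you solve $F_{\bm{x}}(\bm{y})=\bm{v}$ by rewriting it as the fixed-point equation $\bm{y}=\tilde F_{\bm{x}}^{-1}(\bm{v}-G_{\bm{x}}(\bm{y}))$ and invoking Banach's theorem, the paper instead studies $J_{\bm{x}}=\tilde F_{\bm{x}}^{-1}\circ F_{\bm{x}}$, shows it extends smoothly across the singular point $\bm{y}=\bm{x}$ with $\overline J_{\bm{x}}(\bm{x})=\bm{x}$ and $D\overline J_{\bm{x}}(\bm{x})=\mathrm{Id}$, and applies a quantitative inverse function theorem there before composing back with $\tilde F_{\bm{x}}$. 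Your contraction argument buys surjectivity and global uniqueness in $D_{\bm{r_0}}(\bm{x})$ in one stroke and avoids the quantitative IFT; the paper's extension $\overline J_{\bm{x}}$ and the expansion \eqref{Jdiff} are, however, reused later (notably in the proof of Theorem~\ref{Var:Nvortex}), which your version would not supply directly.

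Two technical points need repair. First, $D_{\bm{r_0}}(\bm{x})$ is not complete (the centers are removed), so Banach's theorem should be applied on the closed annular region into which $\Phi_{\bm v}$ actually maps, namely $\{\,|\gamma_i^c|/(|v_i|+M)\le|y_i-x_i|\le|\gamma_i^c|/(|v_i|-M)\,\}$; this is immediate but should be said. Second, and more substantively, the identity $D\Phi=D\tilde F_{\bm{x}}^{-1}\,DF_{\bm{x}}=I+D\tilde F_{\bm{x}}^{-1}\,DG_{\bm{x}}$ is not exact: the chain rule evaluates $D(\tilde F_{\bm{x}}^{-1})$ at $F_{\bm{x}}(\bm{y})$, not at $\tilde F_{\bm{x}}(\bm{y})$, so $D(\tilde F_{\bm{x}}^{-1})(F_{\bm{x}}(\bm{y}))\,D\tilde F_{\bm{x}}(\bm{y})$ differs from the identity by a term controlled by $\|D^2\tilde F_{\bm{x}}^{-1}\|\cdot\|G_{\bm{x}}\|\cdot\|D\tilde F_{\bm{x}}\|$. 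Since $\|D\tilde F_{\bm{x}}(\bm{y})\|\sim|\gamma_i^c|/|y_i-x_i|^2$ is large near $\bm{x}$, this correction is not obviously negligible and must be estimated (it is of order $M|y_i-x_i|/|\gamma_i^c|$, hence small for $\bm{a}$ large, but this is exactly the computation the paper carries out via \eqref{DefH}--\eqref{Jdiff}). Relatedly, to convert the bound $\|D\Phi-I\|\le 1/2$ into the bi-Lipschitz estimate \eqref{eq.iter} you need a mean-value argument on a convex set, whereas your domain is a product of punctured balls; this is precisely why the paper extends $J_{\bm{x}}$ to the full ball $B_{\bm{R}}(\bm{x})$ before concluding. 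With these adjustments your argument goes through and, as you note, all thresholds for $\bm{a}$ depend only on the permitted quantities.
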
 \par
\begin{proof}[Proof of Proposition~\ref{t.nvorticesautonomous}]
This is proven in six steps. To complete Notation \ref{n.ball}, for $\bm{R} = (R_1,\ldots, R_N)$ in $(\R_+^*)^N$, we introduce
\begin{equation} \label{NotBall1}
B_{\bm{R}}(\bm{x}) := B_{R_1}(x_1) \times \cdots \times B_{R_N}(x_N).
\end{equation}

\par	
\noindent
\textbf{1. Restriction of the domain.} 
We first set for $i$ in $\{1,\ldots,N\}$,
\begin{equation} \label{DefR}
R_i := \displaystyle \frac{ \displaystyle \min_{1\leq j \leq N}|\gamma_j^c| \min_{j \neq i} |x_i-x_j| }{8 (N-1) \displaystyle \max_{1\leq j\leq N} (|\gamma_j^c|,|\gamma_j|) }.
\end{equation}
Note in particular that $R_i \leq \displaystyle \frac{1}{8} \min_{1\leq j \leq N} |x_i -x_j|$, thus $B_{R_i}(x_i) \cap B_{R_j}(x_j) = \emptyset$ for $j\ne i$. 
Now for ${\bm y} \in D_{\bm{R}}(\bm{x})$ according to Notation \ref{n.ball}, and for $i,\, j \in \{1,\dots,N\}$, $j\neq i$,
we have
\begin{equation} \label{XiYi}
|x_i-y_i| \leq R_i \ \text{ and } |x_i-y_j| \geq \frac{7|x_i-x_j|}{8}.
\end{equation}
Hence we deduce
\begin{equation*} 
\sum_{j \neq i}\displaystyle \left( \frac{1}{|x_i-x_j|}  + \frac{1}{|x_i-y_j|}   \right) \leq
\frac{15}{7} \sum_{j \neq i}\displaystyle \frac{1}{|x_i-x_j|} ,
\end{equation*}
and
\begin{eqnarray}
\nonumber
\frac{|\gamma_i^c|}{|x_i-y_i|}
&\geq&  8 \displaystyle \max_{1\leq j\leq N} (|\gamma_j^c|,|\gamma_j|) \sum_{j \neq i}\displaystyle  \frac{1}{|x_i-x_j|} \\
\label{Eq2.8bis}
&\geq& \frac{56}{15} \sum_{j\neq i} \left( \frac{|\gamma_j|}{|x_i-x_j|} + \frac{|\gamma_j^c|}{|x_i-y_j|} \right).
\end{eqnarray}
Consequently on $D_{\bm{R}}(\bm{x})$,  for all $i \in \{1,\ldots,N\}$, one has
\begin{equation} \label{Eq:minF}
| F_{\bm{x},i}(\bm{y}) | \geq \frac{|\gamma_i^c|}{|x_i-y_i|} - \sum_{j\neq i} \left( \frac{|\gamma_j|}{|x_i-x_j|} + \frac{|\gamma_j^c|}{|x_i-y_j|} \right) 
> 2 \sum_{j\neq i} \left( \frac{|\gamma_j|}{|x_i-x_j|} + \frac{|\gamma_j^c|}{|x_i-y_j|} \right) .
\end{equation}
\ \par \noindent
\textbf{2. Decomposition of $F_{\bm{x}}$.} 
For $\bm{y}$ in $D_{\bm{R}}(\bm{x})$, we decompose $F_{\bm{x}}(\bm{y})$ as the sum of two contributions:
\begin{equation*}
F_{\bm{x}}(\bm{y}) = \tilde{F}_{\bm{x}}(\bm{y}) + G_{\bm{x}}(\bm{y}),
\end{equation*}
where $\tilde{F}_{\bm{x}}$ was introduced in Lemma \ref{t.yexplicit.dimN}, and, for $i$ in $\{1,\ldots,N\}$,
\begin{equation*}
G_{\bm{x},i}(\bm{y}) := \displaystyle \sum_{j\ne i}^{N} \gamma_j \frac{(x_i-x_j)^{\perp}}{|x_i-x_j|^{2}}
+ \sum_{j\ne i}^{N} \gamma_j^c \frac{(x_i-y_j)^{\perp}}{|x_i-y_j|^{2}}.
\end{equation*}
Note that $G_{\bm{x}}$ is a $C^\infty$ mapping on $D_{\bm{R}}(\bm{x})$, as a rational function without pole in $B_{\bm{R}}(\bm{x})$. Moreover, according to  \eqref{DefR}-\eqref{XiYi}, for all $k \in \N$, we can find a constant $C^G_{k}$, depending only on $k$, $(\gamma_{i})_{i=1..N}$, $(\gamma_{i}^c)_{i=1..N}$ and $\min_{i \neq j} |x_{i} - x_{j}|$ such that for all $\bm{y}$ in $D_{\bm{R}}(\bm{x})$, for all $i \in \{1,\dots,N\}$, we have
\begin{equation} \label{e.Gbound}
\left\| D^k G_{\bm{x},i}(\bm{y}) \right\| \leq C^{G}_{k}.
\end{equation}
Moreover, according to \eqref{Eq2.8bis} we have
\begin{equation} \label{EGsurF}
\left|\frac{G_{\bm{x},i}(\bm{y})}{\tilde{F}_{\bm{x},i}(\bm{y})}\right| < \frac{1}{3} \text{  on  } D_{\bm R}({\bm x}).
\end{equation}
\ \par	
\noindent
\textbf{3. Study of the mapping $\tilde{F}^{-1}_{\bm{x}} \circ F_{\bm{x}}$.} 
We now introduce the mapping 
\begin{equation} \label{DefJ}
J_{\bm{x}} := \tilde{F}_{\bm{x}}^{-1} \circ F_{\bm{x}}.
\end{equation}
that is well-defined of class $C^\infty$ on $D_{\bm{R}}(\bm{x})$ since $F_{\bm{x},i}$ does not vanish for $i=1,\ldots,N$ according to \eqref{Eq:minF}.
Let us study the behavior of $J_{\bm{x},i}$ near $x_i$, for fixed $i$ in $\{1,\dots,N\}$. Starting from
\begin{equation*}
J_{\bm{x},i}(\bm{y}) - x_i = 
 \gamma_i^c \displaystyle \frac{\left( \tilde{F}_{\bm{x},i}(\bm{y})+G_{\bm{x},i}(\bm{y}) \right)^{\perp}}{\left| \tilde{F}_{\bm{x},i}(\bm{y})+G_{\bm{x},i}(\bm{y}) \right|^2},
 \end{equation*}
and relying on
\begin{equation} \label{AbsTildeF}
\frac{1}{\left| \tilde{F}_{\bm{x},i}(\bm{y}) \right|^2} = \frac{|x_{i}-y_{i}|^2}{|\gamma_{i}^c|^2},
\end{equation}
we can write
\begin{equation} \label{ExprJ}
J_{\bm{x},i}(\bm{y}) - x_i  =  \gamma_i^c \displaystyle 
\frac{\left( \tilde{F}_{\bm{x},i}(\bm{y})+G_{\bm{x},i}(\bm{y}) \right)^{\perp}}{\left| \tilde{F}_{\bm{x},i}(\bm{y}) \right|^2} \frac{1}{1 + H_{\bm{x},i}({\bm y})},
\end{equation}
where $H_{\bm{x},i}(\bm{y})$ is a smooth function given on $B_{\bm R}({\bm x})$ by
\begin{equation} \label{DefH}
H_{\bm{x},i}({\bm y}):=
 2 \left\langle \frac{(x_{i}-y_{i})^\perp}{\gamma_{i}^c}, G_{\bm{x},i}(\bm{y}) \right\rangle
+ \frac{|x_{i}-y_{i}|^2}{|\gamma_{i}^c|^2} \left| G_{\bm{x},i}(\bm{y}) \right|^2 \ \text{ on }\  
B_{\bm R}({\bm x}).
\end{equation}
Due to the regularity of $G_{\bm{x}}$, $H_{\bm{x}}$ belongs to $C^\infty(\overline{B_{R}(\bm{x})})$.
Moreover it also holds 
\begin{equation} \label{PropH}
H_{\bm{x},i}({\bm y}):=\frac{1}{\left| \tilde{F}_{\bm{x},i}(\bm{y}) \right|^2}  
\left( 2 \left\langle \tilde{F}_{\bm{x},i}(\bm{y}), G_{\bm{x},i}(\bm{y}) \right\rangle
+ \left| G_{\bm{x},i}(\bm{y}) \right|^2 \right) \ \text{ on }\  D_{\bm R}({\bm x})
\ \text{ and } \ H_{\bm{x},i}({\bm x})=0.
\end{equation}
From \eqref{EGsurF}, we deduce
\begin{equation} \label{EstH}
| H_{\bm{x},i}({\bm y})| < \frac{7}{9} \ \text{ on } \ B_{\bm R}({\bm x}) .
\end{equation}
With \eqref{e.Gbound}, it follows that the Neumann series
\begin{equation*} 
\frac{1}{1 + H_{\bm{x},i}(\bm{y})} = \sum_{j=0}^\infty (-1)^j H_{\bm{x},i}(y)^j,
\end{equation*}
converges in all $C^k(\overline{B}_{{\bm R}})$ spaces.
On the other side, as
\begin{equation} \label{2.13'}
\gamma_{i}^c \frac{ \tilde{F}_{\bm{x},i}(\bm{y})^\perp}{| \tilde{F}_{\bm{x},i}(\bm{y})|^2} = y_i-x_i, 
\end{equation}
we have
\begin{equation} \label{Jdiff}
J_{\bm{x},i}(\bm{y}) - x_i  =  y_i - x_i + |x_i-y_i|^2 G_{\bm{x},i}(\bm{y}) ^{\perp} \displaystyle  \frac{1}{1 + H_{\bm{x},i}({\bm y})}.
\end{equation}
We deduce with~\ref{e.Gbound} that $J_{\bm{x}}$ admits a $C^\infty$ extension $\overline{J}_{\bm{x}}$ on $B_{\bm{R}}(\bm{x})$, and that 
\begin{equation}
\overline{J}_{\bm{x}}(\bm{x}) = \bm{x} \ \text{ and } \
D\overline{J}_{\bm{x}}(\bm{x}) = \textrm{Id}
\end{equation}

\ \par	
\noindent
\textbf{4. Inverse function theorem.}
According to the inverse function theorem, there exist $\bm{r},\, \bm{r}^{\prime}$ in $(\R_+^*)^N$, and $W$ a neighborhood of $x$ with $B_{\bm{r}^{\prime}}(\bm{x}) \subset W$,  such that $\overline{J}_{\bm{x}}$ is a $C^\infty$ diffeomorphism from $B_{\bm{r}}(\bm{x})$ to $W$.
Moreover, thanks to \eqref{AbsTildeF}, one can obtain estimates for $D^2 \tilde{F}$ on $B_{\bm R}({\bm x})$ depending on $(\gamma_{i})_{i=1..N}$, $(\gamma_{i}^c)_{i=1..N}$ and $\min_{i \neq j} |x_{i} - x_{j}|$ only. Thus with the bound \eqref{e.Gbound} and the relations \eqref{DefH} and \eqref{ExprJ}, we can estimate $D^2 \overline{J}_{\bm{x}}$ on $B_{\bm R}({\bm x})$ with a dependence only on the previous quantities. As the minimal radius of such a neighborhood $W$ can be determined only relying on $\frac{1}{\| D^2 J_{\bm{x} \|} }$ (see for instance the version of the inverse function theorem given in \cite[Chapter 6, Lemma 1.3]{Lang}), one can choose  $W$ as a ball with a radius $r^{\prime}$ depending only on $(\gamma_{i})_{i=1..N}$, $(\gamma_{i}^c)_{i=1..N}$ and $\min_{j \neq i} |x_{i} - x_{j}|$. In the sequel we denote $\mathcal{V}_{\bm{x}} :=\overline{J}^{-1}_{\bm{x}}(D_{\bm{r}^{\prime}}(\bm{x}))$ the neighborhood of $\bm{x}$ such that $\overline{J}_{\bm{x}}$ is a $C^\infty$ diffeomorphism from $\mathcal{V}_{\bm{x}} \cup \{ \bm{x} \} $ to $B_{\bm{r}^{\prime}}(\bm{x})$.  \par
\ \par
\noindent	
\textbf{5. Conclusion for $F_{\bm{x}}$.}
According to the previous argument, $J_{\bm{x}}$  is a $C^\infty$ diffeomorphism from $\mathcal{V}_{\bm x}$ to $D_{\bm{r}^{\prime}}(\bm{x})$. Moreover Lemma~\ref{t.yexplicit.dimN} ensures the existence of $\bm{a}$ in $(\R_+^*)^N$, inversely proportional to $\bm{r}^{\prime}$, thus depending only on $(\gamma_{i})_{i=1..N}$, $(\gamma_{i}^c)_{i=1..N}$ and $\min_{j \neq i} |x_{i} - x_{j}|$, with $a_i$ increasing when $\min_{j \neq i} |x_{i} - x_{j}|$  decreases, such that $\tilde{F}_{\bm{x}}$ is a $C^\infty$ diffeomorphism from $D_{\bm{r}^{\prime}}(\bm{x}) $ to $Y_{\bm{a}}$.
As $F_{\bm{x}} = \tilde{F}_{\bm{x}} \circ J_{\bm{x}}$, we conclude that $F_{\bm{x}}$ realizes a $C^\infty$ diffeomorphism from $\mathcal{V}_{\bm x}$ to $Y_{\bm{a}}$. \par
\ \par
\noindent	
\textbf{6. Proof of \eqref{eq.iter}.}
As a consequence of the inverse mapping theorem, there exists a constant $K_a>0$ depending on $\bm{a}$ introduced above, thus on $(\gamma_{i})_{i=1..N}$, $(\gamma_{i}^c)_{i=1..N}$ and $\min_{j \neq i} |x_{i} - x_{j}|$, such that for all $\bm{\tilde{y}} \in \mathcal{V}_{\bm{x}}$ and $\bm{\tilde{u}} \in D_{\bm{r}^{\prime}}(\bm{x})$,
\begin{equation} \label{eq.contraction}
\left\|\bm{\tilde{y}} - J^{-1}_{\bm{x}}(\bm{\tilde{u}}) \right\| \leq K_a \left\|\bm{\tilde{u}} - J_{\bm{x}}(\bm{\tilde{y}})\right\|.
\end{equation}
For $\bm{v}$ in $Y_{\bm{a}}$ we let $\bm{u} := \tilde{F}^{-1}_{\bm{x}}(\bm{v})$. Hence $\bm{u} \in D_{\bm{r}^{\prime}}(\bm{x})$ and  $J_{\bm{x}}^{-1}(\bm{u})= F^{-1}_{\bm{x}}(\bm{v})$, thus putting $\bm{\tilde{u}}$ in \eqref{eq.contraction} leads to \eqref{eq.iter}.

\par
This ends the proof of Proposition~\ref{t.nvorticesautonomous}.
\end{proof}	
\begin{Rema}
We could consider the intensities $(\gamma_1,\ldots,\gamma_N)$ and $(\gamma_1^C,\ldots,\gamma_1^N)$ as variables. When these intensities are fixed, Proposition~\ref{t.nvorticesautonomous} proves that a large enough $v \in (\R^{2})^N$ is attained by $F_{\bm x}$. Then a simple scaling argument shows that any $v \in (\R^{2})^N$ with no zero-component is attained provided that the intensities $(\gamma_1,\ldots,\gamma_N)$ and $(\gamma_1^C,\ldots,\gamma_1^N)$ are sufficiently small.
\end{Rema}
\ \par
We are now in position to prove Theorem~\ref{t.NvortexNcontrols}.
\begin{proof}[Proof of Theorem~\ref{t.NvortexNcontrols}]
This proof consists in three steps. First, we prove the existence of a certain family of $N$ curves from $x_{0,1},\dots, x_{0,N}$ (close to some fixed $\overline{x}_{0,1},\dots, \overline{x}_{0,N}$) to $x_{f,1},\dots, x_{f,N}$, and satisfying properties compatible with Proposition~\ref{t.nvorticesautonomous}.  In a second part, assuming that the initial position of the controls $\bm{y}$, say $\bm{y}(0) = (y_{0,1},\dots, y_{0,N})$ is close enough to $(x_{0,1},\dots, x_{0,N})$, we show how to construct $\bm y$ so that the vortices $\bm x$ follow the prescribed trajectories. The last step explains how to reduce to the previous situation when the assumption on the initial positions of the controls is not satisfied. \par
\ \par
\noindent
\textbf{1. Construction of a family of curves.}	
We first define a family of reference curves in the following lemma.
\begin{Lemm} \label{LemCourbes}
Given $N$ distinct points $\overline{x}_{0,1}$, \dots, $\overline{x}_{0,N}$ in $\R^2$ and distinct points $x_{f,1}$, \dots, $x_{f,N}$ in $\R^2$, there exists $r>0$, such that for any $T> 0$ and any $v_{\min}>0$, any $\tilde{x}_{0,1} \in B(\overline{x}_{0,1};r)$, \dots, $\tilde{x}_{0,N} \in B(\overline{x}_{0,N};r)$, any $v_{0,1},\ldots,v_{0,N}$ in $\R^2$ such that for all $i=1,\ldots,N$, $|v_{0,i}| \geq v_{\min}$, one can find $C^{\infty}$ curves $\Gamma_{1}, \dots, \Gamma_{N}: [0,T] \rightarrow \R^2$ satisfying
\begin{gather}
\label{Courbes0T}
\forall i \in \{1, \dots, N \}, \ \Gamma_{i}(0)=\tilde{x}_{0,i} \ \text{ and } \ \Gamma_{i}(T)=x_{f,i}, \\ 
\label{CourbesSeparees}
\forall t \in [0,T], \ \forall i \neq j, \  \ |\Gamma_{i}(t) - \Gamma_{j}(t)| \geq r, \\
\label{CourbesVmin}
\forall t \in [0,T], \forall i \in \{1, \dots, N \},  \ |\dot{\Gamma}_i(t)| \geq v_{\min}, \\
\label{Courbes0Vitesse}
\dot{\Gamma}_{i}(0)=v_{0,i} \ \text{ for all } \ i=1,\ldots,N.
\end{gather}
\end{Lemm}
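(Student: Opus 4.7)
I propose to construct each $\Gamma_i$ as a smooth concatenation of a short initial arc matching the prescribed velocity $v_{0,i}$, a middle arc that transports the vortex from a neighborhood of $\tilde{x}_{0,i}$ to a neighborhood of $x_{f,i}$ while keeping the $N$ trajectories widely apart, and a short final arc reaching $x_{f,i}$. First I fix
\begin{equation*}
r := \frac{1}{4}\min\Bigl(\min_{i\neq j}|\overline{x}_{0,i}-\overline{x}_{0,j}|,\ \min_{i\neq j}|x_{f,i}-x_{f,j}|\Bigr),
\end{equation*}
so that for any admissible $\tilde{x}_{0,i}\in B(\overline{x}_{0,i},r)$ one has $|\tilde{x}_{0,i}-\tilde{x}_{0,j}|\geq 2r$, while $|x_{f,i}-x_{f,j}|\geq 4r$ automatically. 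Hence \eqref{CourbesSeparees} is satisfied at $t=0$ and $t=T$ regardless of the detailed behavior in between.

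For the middle arc, the basic fact is that the pure configuration space $F_N(\R^2)=\{(z_1,\dots,z_N)\in(\R^2)^N : z_i\neq z_j \text{ for } i\neq j\}$ is path-connected, so that there exists a smooth isotopy from $(\tilde{x}_{0,1},\dots,\tilde{x}_{0,N})$ to $(x_{f,1},\dots,x_{f,N})$. A fully explicit construction is the \emph{parallel lanes} trick: choose a unit vector $\vec{e}\in\R^2$ with generic direction, so that both the initial points $\tilde{x}_{0,i}$ and the final points $x_{f,i}$ have pairwise distinct projections onto $\R\vec{e}$ (only finitely many directions are excluded); pick $N$ lanes $\ell_1,\dots,\ell_N$ parallel to $\vec{e}$, pairwise separated by $\geq 10r$; then proceed in three sub-phases, moving each vortex $i$ first along $\vec{e}^\perp$ onto its lane $\ell_i$ (no collision, since the $\vec{e}$-projections stay constant and distinct), then along its own lane to the $\vec{e}$-coordinate of $x_{f,i}$ (no collision, since lanes are distinct), then along $\vec{e}^\perp$ off its lane to reach $x_{f,i}$ (no collision, since the $\vec{e}$-projections stay constant and distinct). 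The uniform separation during this middle arc is $\geq 10r$.

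To enforce~\eqref{Courbes0Vitesse}, I prepend on some small interval $[0,\tau]$ a smooth initial segment of the form $\Gamma_i(t)=\tilde{x}_{0,i}+t\,v_{0,i}+O(t^2)$, joined to the middle arc by a $C^\infty$ cutoff; for $\tau$ small enough this segment stays inside $B(\tilde{x}_{0,i},r/2)$, so that \eqref{CourbesSeparees} is preserved. To enforce~\eqref{CourbesVmin}, I inflate the middle arc by adding small loops confined to tubes of radius $r$ around each individual curve (these tubes are mutually disjoint thanks to the separation bound $\geq 3r$) until the total arc length exceeds $v_{\min}T$, and then I choose a $C^\infty$ reparameterization of $[0,T]$ whose speed is uniformly bounded below by $v_{\min}$; the hypothesis $|v_{0,i}|\geq v_{\min}$ makes this reparameterization compatible with the prescribed initial velocity at $t=0$.

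The main obstacle is the geometric construction of the middle arc, since the indexing of the initial and final points is arbitrary and the isotopy may have to realize a non-trivial permutation of $N$ points in the plane, which cannot be achieved by straight lines and requires exploiting the 2D ambient space to route the trajectories around one another. The parallel lanes argument provides an explicit realization with full control over the separation; once it is in place, the initial velocity, minimum speed, and $C^\infty$ regularity requirements are handled by routine smooth cutoff and reparameterization arguments.
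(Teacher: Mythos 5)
Your overall architecture (disjoint middle curves, a short initial prefix to match $v_{0,i}$, loops plus reparameterization to enforce the speed floor) matches the paper's, and your ``parallel lanes'' construction is a nice explicit alternative to the paper's iterative path-connectedness argument for producing $N$ disjoint curves realizing an arbitrary permutation. However, there is one genuine gap: you fix $r$ \emph{before} the construction, as a quarter of the minimal pairwise distances among the $\overline{x}_{0,i}$ and among the $x_{f,i}$, and then claim \eqref{CourbesSeparees} with this $r$. During your first and third sub-phases (moving onto and off the lanes along $\vec{e}^{\perp}$), the only thing separating two vortices is the difference of their projections onto $\R\vec{e}$, and ``generic direction, hence pairwise distinct projections'' gives no quantitative lower bound: two of the $\tilde{x}_{0,i}$ (or two of the $x_{f,i}$) can have $\vec{e}$-projections differing by an amount far smaller than your $r$, so the curves can approach each other to within much less than $r$ and \eqref{CourbesSeparees} fails as stated. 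The fix is to reverse the order of choices: pick $\vec{e}$ first (depending only on $\overline{x}_{0,i}$ and $x_{f,i}$), let $\delta>0$ be the minimal gap between the relevant projections, and only then set $r:=\tfrac14\min(\delta,\dots)$, noting that perturbing each starting point within $B(\overline{x}_{0,i},r)$ degrades the projection gaps by at most $2r$. This is exactly the circularity the paper avoids by building its reference curves from the fixed base points $\overline{x}_{0,i}$, defining $r$ as a quarter of the minimal distance between the \emph{constructed graphs}, and only afterwards attaching a short connector from $\tilde{x}_{0,i}$ to $\overline{x}_{0,i}$ inside $B(\overline{x}_{0,i},r)$.

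A secondary point, not fatal but worth making explicit: your lanes path is piecewise linear, so it has corners, and a corner cannot be traversed by a $C^{\infty}$ curve with $|\dot\Gamma_i|\geq v_{\min}>0$; you must round the corners (inside the tubes, so separation is preserved) before the arc-length inflation and reparameterization step. Likewise the insertion of ``small loops'' must keep the curve a smooth immersion for the speed-$\geq v_{\min}$ reparameterization to be $C^{\infty}$; the paper's concrete device for this is to blend the (accelerated) reference curve with small circles at the targets via a cutoff function, and some such explicit mechanism is needed in your argument too. With these repairs your proof goes through and is essentially parallel to the paper's, the lanes construction being the only substantive methodological difference.
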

\begin{proof}[Proof of Lemma~\ref{LemCourbes}]
We start with the case $x_{i}=\overline{x}_{i}$ for all $i=1\ldots N$, and without the constraint \eqref{Courbes0Vitesse}. \par
In that case , 
it is easy to construct smooth and simple curves $\overline{\mathcal{C}}_{1}, \dots, \overline{\mathcal{C}}_{N}: [0,1] \rightarrow \R^2$, with disjoint graphs, with $|\dot{\overline{\mathcal{C}}}_{i}|>0$, and driving $\overline{x}_{0,i}$ to $x_{f,i}$. Indeed one first constructs $\overline{\mathcal{C}}_{1}$ by noticing that $\R^2 \setminus \{\overline{x}_{0,2}, \dots, \overline{x}_{0,N}, x_{f,2}, \dots, x_{f,N}\}$ is path-connected, and then one constructs $\overline{\mathcal{C}}_{2}$ by noticing that 
$\R^2 \setminus \left(\overline{\mathcal{C}}_{1}([0,1]) \cup \{\overline{x}_{0,3}, \dots, \overline{x}_{0,N}, x_{f,3}, \dots, x_{f,N}\}\right)$
is path-connected, etc. Moreover, one can ask that in small neighborhood of $t=0$, $\dot{\overline{\mathcal{C}}}_{i}(t)$ is constant. \par
Now we let
\begin{equation*} 
r:= \frac{1}{4} \min \{ |\overline{\mathcal{C}}_{i}(s) - \overline{\mathcal{C}}_{j}(t)| , \ s,t \in [0,1], \ i \neq j \} >0.
\end{equation*}
In order to achieve \eqref{CourbesVmin}, we introduce a slight modification of the curves $\overline{\mathcal{C}}_{1}, \dots, \overline{\mathcal{C}}_{N}$ . We consider small circles passing through $x_{f,1}$, \dots, $x_{f,N}$, with diameter less than $r/4$, and parameterized by $C^{\infty}$ mappings $c_{1}, \dots, c_{N}:\R \rightarrow \R^2$, $1$-periodic with $c_{i}(0)=x_{f,i}$ and $\dot{c}_{i}(0) \perp \dot{\overline{\mathcal{C}}}_{i}(1)$ for $i=1,\ldots,N$. We let $\varphi \in C^\infty(\R; \R)$ a decreasing function such that $\varphi=1$ on $(\infty,-1]$ and $\varphi=0$ on $[0,+\infty)$. Then one set the curves $\overline{\Gamma}_{1},\dots, \overline{\Gamma}_{N}$ given by the following formula for $k$ and $n$ large enough (assuming $\overline{\mathcal{C}}_{i} (t)= x_{f,i}$ for $t>1$):
\begin{equation} \label{Curveconstruc} 
\overline{\Gamma}_{i}(t) := 
\varphi\left(n \left(t-\frac{T}{k}\right) \right) \overline{\mathcal{C}}_{i}\left(\frac{kt}{T}\right) 
 + \left[1- \varphi \left(n\left(t-\frac{T}{k}\right)\right)\right] c_{i}\left(\frac{kt}{T}\right).
\end{equation}
One checks that condition \eqref{CourbesVmin} is satisfied for $k$ large enough as $\dot{\overline{\Gamma}}_i(t) \neq 0$ for $t$ in $[0,T]$, and condition \eqref{CourbesSeparees} is satisfied for $n$ large enough.

It remains to explain how to treat other starting points $\tilde{x}_{0,i} \in B(\overline{x}_{0,i};r)$ and to obtain \eqref{Courbes0Vitesse}. The idea is to go from $\tilde{x}_{0,i}$ to $\overline{x}_{0,i}$ in a very short time, and then to follow the previous trajectory $\overline{\Gamma}_i$. To do so, we introduce $C^{\infty}$ curves $\tilde{c}_{1},\dots, \tilde{c}_{N}:[0,\varepsilon] \rightarrow \R^2$ for $\varepsilon>0$ small enough, such that in a neighborhood of $t=0$, $\tilde{c}_{i}(t)= \tilde{x}_{0,i} + v_{0,i} t$, in a neighborhood of $t=\varepsilon$, $\tilde{c}_{i}(t)= \overline{x}_{0i} + \dot{\overline{\Gamma}}_{i}(0)(t-\varepsilon)$, during $[0,\varepsilon]$, one has $\tilde{c}_{i} \in B(\overline{x}_{i,0};r)$ and $|\dot{\tilde{c}}_{i}| \geq v_{\min}$. This is easily obtained for small $\varepsilon$. Then we finally rescale in time by setting for $i \in \{1,\dots, N\}$,
\begin{equation*} 
\Gamma_{i}(t)= \tilde{c}_{i}\left(\frac{T+\varepsilon}{T}t\right) \ \text{ in } \ \left[0, \frac{\varepsilon T}{T+\varepsilon}\right]
 \ \text{ and }\
\Gamma_{i}(t)= \overline{\Gamma}_{i} \left(\frac{T + \varepsilon}{T} t- \varepsilon\right) \ \text{ in } \ \left[\frac{\varepsilon T}{T+\varepsilon}, T\right],
\end{equation*}
which is $C^{\infty}$ since $\dot{\overline{\Gamma}}_i(t)$ is constant in a neighborhood of $t=0$. \par
This ends the proof of Lemma~\ref{LemCourbes}.
\end{proof}
\ \par
\noindent
%
%
\textbf{2. Construction of the control when $\bm{y}_0$ is close to $\bm{x}_0$.}
In this step, we first consider the case where $\bm{y}_0$ close to $\bm{x}_0$; the general case will be deduced later. \par
Given $x_{0,1}, \dots, x_{0,N}$ and $x_{f,1}, \dots, x_{f,N}$, we use Lemma~\ref{LemCourbes} with $(\overline{x}_{0,1}, \dots, \overline{x}_{0,N})=(x_{0,1}, \dots, x_{0,N})$ as a base point. Then according to the first part of the proof there exists a minimal distance $r>0$ between the possible curves $\Gamma_1, \dots, \Gamma_N$, for now independently of $v_{\min}>0$, of $\tilde{x}_{0,1} \in B(\overline{x}_{0,1};r), \dots, \tilde{x}_{0,N} \in B(\overline{x}_{0,N};r)$, and of $v_{0,1},\ldots,v_{0,N}$ in $\R^2$ (such that for all $i=1,\ldots,N$, $|v_{0,i}| \geq v_{\min}$.) \par
Recalling Proposition~\ref{t.nvorticesautonomous} and in particular the monotonicity of $\bm{a}$ with respect to $\min_{i \neq j } |\Gamma_i(t) - \Gamma_j(t)|$, there exists $\bm{a} \in (\R^*_+)^N$ corresponding to $\min_{t \in [0,T]} \min_{i \neq j } |\Gamma_i(t) - \Gamma_j(t)|$ such that for all $t \in [0,T]$ $F_{\bm{\Gamma}(t)}$ is surjective onto $Y_{\bm{a}}$. Then we fix
\begin{equation} \label{DefVmin}
v_{\min}=\max(a_{1},\ldots,a_{N}).
\end{equation}
Now we suppose that for all $i \in 1..N$, $y_{0,i}$ is sufficiently close to $\tilde{x}_{0,i}$ for the condition $|F_{\bm{\tilde{x}_0},i}(\bm{y}_0) | > v_{\min}$ to be verified. We express this condition with a parameter $\hat{r}$:
\begin{equation} \label{HypYprocheX}
| y_{0,i} - \tilde{x}_{0,i} | < \hat{r},
\end{equation}
 and we notice that $\hat{r}$ merely depends on $r$ and $v_{\min}$.  \par
Now given  $\tilde{x}_{0,1} \in B(\overline{x}_{0,1};r)$, \dots, $\tilde{x}_{0,N} \in B(\overline{x}_{0,N};r)$, we deduce completely the curves $\Gamma_{1}, \dots , \Gamma_{N}$ according to the minimal speed $v_{\min}$ and by setting
\begin{equation*} 
v_{0,i} :=  F_{\tilde{\bm{x}}_{0},i}(\bm{y}_{0}) .
\end{equation*}
We then define, for $t$ in $[0,T]$,
\begin{equation*}
\bm{v}(t) := \frac{d\bf{ \Gamma }}{dt}(t),
\end{equation*}
Hence according to Proposition~\ref{t.nvorticesautonomous}, we may now define the control as following:
\begin{equation} \label{ZeControl}
\forall t \in \left[0,T\right],\, \bm{y}(t)= F^{-1}_{\bf \Gamma(t) }(\bm{v}(t)).
\end{equation}
Thanks to the regularity of $\bm{v}$, $\bm{y} \in C^{\infty}(\left[0,T\right],\R^{2})^N$, and the corresponding trajectory leads $\tilde{\bm{x}}_0$ to $\bm{x_f}$ in time $T$ by construction. \par
\ \par
\noindent
\textbf{3. Starting from any ${\bm y}_{0}$.} We now explain how to reduce the general case to the case where the assumption \eqref{HypYprocheX} is satisfied. The idea is to introduce a first step during which we bring all $y_{i}$ close to $x_{0,i}$ in a \textit{very short time}. This will slightly affect the position of the vortices $x_{i}$, but  nevertheless ensure that condition \eqref{HypYprocheX} is fulfilled. \par
Given $x_{0,1}$, \dots, $x_{0,N}$ and $x_{f,1}$, \dots, $x_{f,N}$, we apply step 2. 
We deduce some $r>0$ and $\hat{r}>0$ such that we know how to drive  $\tilde{x}_{0,1} \in B(x_{0,1};r)$, \dots, $\tilde{x}_{0,N} \in B(x_{0,N};r)$ to $\bm{x}_{f}$ provided that \eqref{HypYprocheX} is satisfied. \par
Now arguing as in the first part of this proof with a path-connectedness argument, we construct curves $\mathfrak{C}_{1},\ldots,\mathfrak{C}_{N}:[0,1] \rightarrow \R^2$ joining $(y_{0,1}, \dots, y_{0,N})$ to some points $(\check{x}_{0,1}, \dots, \check{x}_{0,N})$ in $S(x_{0,1},\hat{r}/2) \times \cdots \times S(x_{0,N},\hat{r}/2)$, such that these curves and the points $x_{0,i}$ stay at a minimal distance $\tilde{r}>0$, that is to say
\begin{align*} 
\min \{ |\mathfrak{C}_{i}(s) - x_{0,j}|, \ s \in [0,1], \ i,j =1\ldots N \} & > \tilde{r}, \\
\min \{ |\mathfrak{C}_{i}(s) - \mathfrak{C}_{j}(t)| , \ s,t \in [0,1], \ i \neq j \} & > \tilde{r}, \\
\min\{|x_{0,i} - x_{0,j}|, i \neq j\} &> \tilde{r} .
\end{align*}
Then for $\varepsilon>0$ small, we consider the solution $\bm{x}^\varepsilon:=(x_{1}^{\varepsilon}, \dots, x_{N}^{\varepsilon})$ of \eqref{e.odeN} for $t \in [0,\varepsilon]$, with the control $\bm{y}^{\varepsilon}$ given by
\begin{equation*} 
y^\varepsilon_{i}(t) = \mathfrak{C}_{i} \left(\frac{t}{\varepsilon}\right) \ \text{ for } \ t \in [0,\varepsilon], \ i =1,\ldots,N.
\end{equation*}
Let us show the existence of $\varepsilon_0$ such that for all $\varepsilon < \varepsilon_0$, $\bm{x}^{\varepsilon}$ is well defined on $[0,\varepsilon]$, namely that the trajectories of the vortices and the controls do not cross. 
This is a continuous induction argument. \par
At initial time, by construction, for $i \neq j$, we have $|x_{0,i}-x_{0,j}| > \tilde{r}$ and $|x_{0,i}-y_{0,j}| > \tilde{r}$. hence by continuity of $\bm{x}^{\varepsilon}$, for each $\varepsilon >0$ there exists $t_{\varepsilon} > 0$ such that for $t \in [0,t_{\varepsilon}]$:
\begin{equation}\label{Xeps}
|x_{i}^{\varepsilon}(t)-x_{j}^{\varepsilon}(t)| > \frac{\tilde{r}}{2} \text{ and } |x_{i}^{\varepsilon}(t)-y_{j}^{\varepsilon}(t)| > \frac{\tilde{r}}{2}.
\end{equation}
It follows that on $[0,t_{\varepsilon}]$,
\begin{equation} \label{Vepsmax}
\left|\frac{d x_i^{\varepsilon}}{dt} \right|(t) < \sum_{j \neq i} \frac{2|\gamma_j|}{\tilde{r}} + \sum_{j=1..N} \frac{2|\gamma^c_j|}{\tilde{r}} =: |v_i|_{\infty}. 
\end{equation}
Since $|v_i|_{\infty}$ does not depend on $\varepsilon$, we deduce that \eqref{Xeps} holds for all $t \in [0,\varepsilon]$ for $\varepsilon < \varepsilon_0 := \frac{\tilde{r}}{4 \max |v_i|_{\infty} }$. Thus $\bm{x}^{\varepsilon}$ is well defined for $\varepsilon$ sufficiently small. 
Moreover according to \eqref{Vepsmax}, 
\begin{equation*} 
\| \bm{x}^\varepsilon - \bm{x}_{0} \|_{L^{\infty}(0,\varepsilon)} \leq \varepsilon \max_{1\leq i \leq N} |v_i|_{\infty}
\longrightarrow 0 \ \text{ as } \ \varepsilon \rightarrow 0^+.
\end{equation*}
Hence for suitably small $\varepsilon$, $\bm{x}^\varepsilon(\varepsilon)$ belongs to $B(x_{0,1},r) \times \dots \times B(x_{0,N},r)$ and one can then apply the second step of this proof with $\bm{x}^\varepsilon(\varepsilon)$ as initial points,  $\bm{y}^\varepsilon(\varepsilon)$ as initial positions of the control vortices and $T-\varepsilon$ as a time horizon. It suffices indeed to rescale in time the curves $\Gamma_{i}$, which does not affect the construction as long as we go {\it faster} (see in particular how $\hat{r}$ is defined after \eqref{HypYprocheX}). Moreover, proceeding as previously, we may ensure that the connection of the two parts of the solution (during $[0,\varepsilon]$ and $[\varepsilon,T]$) is of class $C^\infty$. \par
\ \par
This ends the proof of Theorem~\ref{t.NvortexNcontrols}.
\end{proof}
\subsection{A variant of Theorem~\ref{t.NvortexNcontrols}}
Let us finish with a variant of Theorem~\ref{t.NvortexNcontrols} obtained as a consequence of Variant~\ref{Var:onevortex}. It allows, when ${\bm x}_{0}$ and ${\bm x}_{f}$ are sufficiently close in a certain sense, to ensure that  the vortices $x_{i}$ according to the previous construction are moving along straight lines. It enables to have a clear localization of all vortices $x_{i}$ and $y_{i}$ at all times. \par
\begin{Theo} \label{Var:Nvortex}
Let $\overline{\bm{x}}_f= (\overline{x}_{f,1},\dots, \overline{x}_{f,N}) $ in $(\R^{2})^N$ with $\overline{x}_{f,i}\ne \overline{x}_{f,j}$ for $i,j$ in $\{1,\ldots,N\}$ and $i\ne j$.
There exists $ D_{0} \in \big(0, \min_{i \neq j} |\overline{x}_{f,i} - \overline{x}_{f,j}|/8 \big)$ such that for any $D \in (0,D_{0})$, setting
\begin{equation} \label{DefTauRho}
\tau = \frac{D^2}{\min_{i} |\gamma^c_{i}|} \ \text{ and } \ \rho = \min \left(\frac{D}{8},D^3\right),
\end{equation}
the following holds. Let $\overline{\bm{x}}_0= (\overline{x}_{0,1},\dots, \overline{x}_{0,N}) $ in $(\R^{2})^N$ such that for all $i$, $\overline{x}_{0,i} \in \overline{B}(\overline{x}_{f,i},D) \setminus \overline{B}(\overline{x}_{f,i},D/2)$. Let
\begin{itemize}
\item ${\mathcal X}_{i}$ the ``ice-cream cone'' $\mbox{Conv} \left(\{ \overline{x}_{0,i} \} \cup \overline{B}(\overline{x}_{f,i}, \rho)\right)$,
\item ${\mathcal Y}_{i}$ the ``stadium'' $\gamma^c_{i} \tau \displaystyle \frac{(\overline{x}_{f,i} - \overline{x}_{0,i})^\perp}{|\overline{x}_{f,i} - \overline{x}_{0,i}|^2} + \big[\overline{x}_{0,i}, \overline{x}_{f,i}\big] +  \overline{B}(0, D/8)$.
\end{itemize}
Then all ${\mathcal X}_{i}$ and ${\mathcal Y}_{i}$ are disjoint compact convex sets and moreover,
for any $\bm{x}_{f}= (x_{f,1}, \dots, x_{f,N})$ in $\overline{B}(\overline{x}_{f,1},\rho) \times \cdots \times \overline{B}(\overline{x}_{f,N},\rho)$,
one can find  controls $y_1, \dots, y_N : [0,\tau] \rightarrow \R^2$ with values in ${\mathcal Y}_{1}, \ldots, {\mathcal Y}_{N}$ respectively,
driving $\overline{{\bm x}}_{0}$ to ${\bm x}_{f}$ in time $\tau$ and such that the corresponding solutions $x_1, \ldots, x_N$ of \eqref{e.odeN} are straight lines belonging to ${\mathcal X}_{1}, \ldots, {\mathcal X}_{N}$ respectively.
\end{Theo}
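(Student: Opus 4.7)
The plan is to combine Proposition~\ref{t.nvorticesautonomous} (applied to the straight-line reference trajectories $\Gamma_i(t) := \overline{x}_{0,i} + (t/\tau)(x_{f,i} - \overline{x}_{0,i})$) with the observation that, for $D$ small, the resulting multi-vortex control $\bm y(t) := F^{-1}_{\bm\Gamma(t)}(\dot\bm\Gamma(t))$ is a small perturbation of the single-vortex control given by Variant~\ref{Var:onevortex}. The parameters $\tau = D^2/\min_i|\gamma_i^c|$ and $\rho \leq D^3$ are chosen precisely to make this perturbation small enough to fit into the $D/8$-thickening that defines the stadium ${\mathcal Y}_i$.

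For the geometric part, I first check that ${\mathcal X}_i \subset \overline{B}(\overline{x}_{f,i}, D)$ (by convexity and since $\rho \leq D$) and ${\mathcal Y}_i \subset \overline{B}(\overline{x}_{f,i}, C_\gamma D)$ for some constant $C_\gamma$ depending only on the intensities, since the perpendicular offset $|\gamma_i^c|\tau/|\overline{x}_{f,i} - \overline{x}_{0,i}|$ is of order $D$. Sets with different indices $i$ then lie in disjoint balls for $D_0$ small enough compared to $\min_{i\ne j}|\overline{x}_{f,i} - \overline{x}_{f,j}|$. The separation between ${\mathcal X}_i$ and ${\mathcal Y}_i$ follows because the perpendicular offset is at least $|\gamma_i^c|D/\min_j|\gamma_j^c|$, which exceeds the combined radii $\rho + D/8 \leq D/4$ for small $D_0$.

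Next, I apply Proposition~\ref{t.nvorticesautonomous} to $\bm\Gamma(t)$. The minimum distance $\min_{i \neq j}|\Gamma_i(t) - \Gamma_j(t)|$ is bounded below by a positive constant (essentially $\min_{i \neq j}|\overline{x}_{f,i} - \overline{x}_{f,j}|/2$) independently of $D$, so the vector $\bm a$ produced by the proposition is bounded. On the other hand $|\dot\Gamma_i(t)| = |x_{f,i} - \overline{x}_{0,i}|/\tau \geq \min_j|\gamma_j^c|/(4D) \to +\infty$ as $D \to 0$, so $\dot\bm\Gamma(t) \in Y_{\bm a}$ for $D_0$ small and $\bm y(t) := F^{-1}_{\bm\Gamma(t)}(\dot\bm\Gamma(t))$ is well-defined and smooth on $[0,\tau]$. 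By construction the resulting solution of \eqref{e.odeN} is exactly $\bm\Gamma$, which gives the desired straight-line trajectories inside the ${\mathcal X}_i$.

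The key remaining step is to show $y_i(t) \in {\mathcal Y}_i$. Let $\tilde y_i(t) := \tilde F^{-1}_{\bm\Gamma(t),i}(\dot\bm\Gamma(t)) = \Gamma_i(t) + \gamma_i^c \dot\Gamma_i(t)^\perp/|\dot\Gamma_i(t)|^2$ be the corresponding single-vortex control from Variant~\ref{Var:onevortex}. A direct first-order expansion, using $|x_{f,i} - \overline{x}_{f,i}| \leq \rho$ and the fact that $v \mapsto v^\perp/|v|^2$ has derivative of order $1/D^2$ near $|v| \sim D$, shows that $\tilde y_i(t)$ stays within $O(\rho) = O(D^3)$ of the axis of ${\mathcal Y}_i$. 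For the multi-vortex correction, apply inequality \eqref{eq.iter} with $\tilde y = \tilde y(t)$: since $\tilde F^{-1}_{\bm\Gamma}$ has Lipschitz constant of order $1/|\dot\Gamma|^2 = O(D^2)$ on the relevant region and $G_{\bm\Gamma}(\tilde y)$ is bounded by a constant (all pairwise distances between the $\Gamma_i$'s and the $\tilde y_j$'s with $j\ne i$ are bounded below), one gets $\|y_i(t) - \tilde y_i(t)\| = O(D^2)$. Combining, $\|y_i(t) - \mathrm{axis}({\mathcal Y}_i)\| = O(D^2) \leq D/8$ for $D_0$ small, so $y_i(t) \in {\mathcal Y}_i$. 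The main obstacle is precisely this balancing of scales in the last step, where one must simultaneously control the distortion coming from $x_f \neq \overline{x}_f$ (scaling like $\rho$) and the cross-interaction between the $N$ vortex--control pairs (scaling like $D^2$); the choices $\tau = D^2/\min_i|\gamma_i^c|$ and $\rho = \min(D/8, D^3)$ are exactly what makes both fit inside the $D/8$-thickening.
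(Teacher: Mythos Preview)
Your proposal is correct and follows essentially the same approach as the paper: the paper also prescribes the straight-line trajectories $\Gamma_i$, invokes Proposition~\ref{t.nvorticesautonomous} (using that $\min_{i\neq j}|\Gamma_i(t)-\Gamma_j(t)|$ is bounded below independently of $D$ so that $\bm a$ is fixed while $|\dot\Gamma_i|\to\infty$), defines $\bm y = F^{-1}_{\bm\Gamma}(\dot{\bm\Gamma})$, and then bounds $|y_i-\underline{\tilde y}_i|$ by splitting into the $\rho$-perturbation term $|\tilde y_i-\underline{\tilde y}_i|=O(D^3)$ and the cross-interaction term $|y_i-\tilde y_i|=O(D^2)$ via \eqref{eq.iter}. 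The only cosmetic differences are that the paper treats the geometric disjointness of the $\mathcal X_i,\mathcal Y_j$ last rather than first, and bounds the cross-interaction term through the identity \eqref{Jdiff} (giving $\|\tilde{\bm y}-J_{\bm\Gamma}(\tilde{\bm y})\|\leq C\|\tilde{\bm y}-\bm\Gamma\|^2=O(D^2)$) rather than through the Lipschitz constant of $\tilde F^{-1}_{\bm\Gamma}$ as you do; these are equivalent computations.
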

\begin{proof}
The proof is composed of three steps.
We first give a first sufficient condition on $D$ (characterizing the admissibility set for $\overline{\bm{x}}_0$) such that it is possible to construct controls $\bm{y}$ associated to straight lined trajectories between $\overline{\bm{x}}_0$ and $\bm{x}_f$.
Then by comparing $\bm{y}$ and the control $\bm{\tilde{y}}$ corresponding to the "uncoupled" system, we show that for $D$ small enough (and with $\rho$ and $\tau$ defined as above), $y_i$ belongs to $\mathcal{Y}_i$ for all $i$. Finally, one shows the disjunction of the sets $\mathcal{X}_i$, $\mathcal{Y}_i$ for all $i,j$ by using again the smallness of $D$ and the form of $\tau$ and $\rho$.

\ \par
\noindent
\textbf{1. Construction of the controls associated to straight-lined trajectories.}
This step of the proof focuses on sufficient conditions to ensure the invertibility of the right-hand side of the system~\eqref{e.odeN}.
We first apply Proposition~\ref{t.nvorticesautonomous} ensuring the monotonicity of $\bm{a}$ with respect to $\min_{i \neq j } |x_i - x_j|$. We can therefore introduce $\bm{a} \in (\R^*_+)^N$ such that $F_{\bm{x}}$ is surjective onto $Y_{\bm{a}}$ for all ${\bm x}=(x_{1}, \ldots, x_{n}) \in (\R^2)^N$ satisfying
\begin{equation} \label{XiXjLoin}
|x_{i} - x_{j}| \geq \frac{\min_{i \neq j} |\overline{x}_{f,i} - \overline{x}_{f,j}|}{2}.
\end{equation}
 We deduce $v_{\min}$ given by \eqref{DefVmin}, and $K_a$ so that \eqref{eq.iter} applies. \par
Now, assuming at first $D \in \big(0, \min_{i \neq j} |\overline{x}_{i,f} - \overline{x}_{j,f}|/8 \big)$ we construct the trajectories as follows.
Given ${\bm x}_{f}$ in $\overline{B}(\overline{x}_{f,1},\rho) \times \cdots \times \overline{B}(\overline{x}_{f,N},\rho)$, we introduce  $\Gamma_i: [0,\tau] \mapsto \R^2$ as straight lines from $\overline{x}_{0,i}$ to ${x}_{f,i}$:
\begin{equation*}
\Gamma_{i}(t) = \overline{x}_{0,i} + \frac{t}{\tau}(x_{f,i}-\overline{x}_{0,i}), \ \ \forall t \in [0,\tau].
\end{equation*}
These trajectories clearly satisfy \eqref{XiXjLoin} for all $t \in [0, \tau]$, as $\Gamma_i(t) \in B(\overline{x}_{fi},D)$ for all $t \in [0, \tau ]$. \par
Then following Variant~\ref{Var:onevortex}, we can construct for all $i \in \{1,\dots,N\}$ the associated ``uncoupled'' control vortices
\begin{equation} \label{DefYtildei}
\tilde{y}_i(t) := \tilde{F}^{-1}_{\bm{\Gamma}(t),i} (\bm{\dot{\Gamma}}(t)) = \Gamma_{i}(t) + \gamma^c_{i} \tau \frac{(x_{f,i}-\overline{x}_{0,i})^\perp}{|x_{f,i}-\overline{x}_{0,i}|^2},
\end{equation}
and the ``coupled'' ones
\begin{equation*}
{\bm y}(t) := F_{{\bm \Gamma(t)}}^{-1} \left(\dot{\Gamma}_{1}(t),\ldots,\dot{\Gamma}_{N}(t)\right)
= F_{{\bm \Gamma(t)}}^{-1} \left( \frac{x_{f,1}-\overline{x}_{0,1}}{\tau} ,\ldots, \frac{x_{f,N}-\overline{x}_{0,N} }{\tau} \right).
\end{equation*}
This last construction is possible if 
\begin{equation} \label{CondVitesse}
	\left\lVert \displaystyle \frac{\bm{x_{f}} - \bm{\overline{x}_{0}}}{\tau}  \right\rVert_{\infty} \geq v_{\text{min}}. 	
\end{equation}
As for all $i\in 1,\ldots,N$, we have $\overline{x}_{0,i} \in \overline{B}(\overline{x}_{f,i},D) \setminus \overline{B}(\overline{x}_{f,i},D/2)$ and $x_{f,i} \in \overline{B}(\overline{x}_{f,i},\rho)$, we deduce
\begin{equation} \label{xfx0min}
|x_{f,i} - \overline{x}_{0,i}| \geq \frac{3}{8} D. 
\end{equation}
With the definition of $\tau$ given in \eqref{DefTauRho}, \eqref{CondVitesse} amounts to a second smallness condition on $D$ (that yields $D_0$):
\begin{equation} \label{Eq:CondD1}
D \leq \frac{3}{8} \frac{\min_{i} |\gamma^c_{i}|}{v_{\min}}.
\end{equation}
It is straightforward that $\bm{y}$ drives $\overline{{\bm x}}_{0}$ to ${\bm x}_{f}$ in time $\tau$ and and that the corresponding solutions $x_1, \ldots, x_N$ of \eqref{e.odeN} are straight lines belonging to ${\mathcal X}_{1}, \ldots, {\mathcal X}_{N}$ respectively. 

\ \par
\noindent
\textbf{2. Location of the controls.}
Let us now show that the controls $y_{i}$ belong to ${\mathcal Y}_{i}$ for $\rho$ given as in \eqref{DefTauRho}. 
We first introduce the trajectories $\bm{\underline{x}}$ and the ``uncoupled'' controls $\bm{\underline{y}}$ corresponding to the particular case ${\bm x}_{f} = \overline{\bm x}_{f}$:
\begin{equation} \label{DefUnderlineX}
\underline{x}_{i}(t) := \overline{x}_{0,i} + \frac{t}{\tau}(\overline{x}_{f,i}-\overline{x}_{0,i}) 
\ \text{ and } \ 
\underline{\tilde{y}}_i(t) :=  \underline{x}_{i}(t) + \gamma_{i}^c \tau \frac{(\overline{x}_{f,i}-\overline{x}_{0,i})^\perp}{|\overline{x}_{f,i}-\overline{x}_{0,i}|^2},
\end{equation}
corresponding to the ``middle lines'' in the ice cream cone ${\mathcal X}_{i}$ and in the stadium ${\mathcal Y}_{i}$, respectively. We now locate $y_{i}$ by means of
\begin{equation} \label{LocalisationY}
|y_{i}(t) - \underline{\tilde{y}}_{i}(t)| \leq 
|y_{i}(t) - \tilde{y}_{i}(t)| + | \tilde{y}_{i}(t) - \underline{\tilde{y}}_{i}(t)|.
\end{equation}
Let us now estimate the two terms in the right-hand side of \eqref{LocalisationY}.

Concerning the second term, we use that $x \mapsto \frac{x^\perp}{\lvert x\rvert^2}$ can be represented by the complex map $z \mapsto \frac{i}{\overline{z}}$ for $z = x_1 + i x_2$ and that $\left|\frac{1}{z} - \frac{1}{z'}\right| = \frac{|z-z'|}{|zz'|}$ in $\R^2$. Using \eqref{DefYtildei}, \eqref{xfx0min} and \eqref{DefUnderlineX}, and noticing that $\vert \underline{x}_{i}(t) - \Gamma_i(t)\vert \leq \rho$ on $[0,\tau]$, we find that
\begin{equation*}
| \tilde{y}_{i}(t) - \underline{\tilde{y}}_{i}(t)| \ \leq \  \rho + |\gamma_{i}^c| \tau \frac{64}{9D^2} \rho  
\ \leq \  D^3 \left( 1+ \frac{64}{9} \frac{|\gamma_{i}^c|}{\min_{j} |\gamma^c_{j}|} \right).
\end{equation*}
For $D$ sufficiently small, this is clearly less than $D/16$. 
\par
Concerning the first term in the right-hand side of \eqref{LocalisationY}, according to \eqref{eq.iter} and recalling \eqref{DefJ}, we have for all $t \in [0,\tau]$, 
\begin{equation} \label{eq.ineqrefcontrol}
\left\| \bm{\tilde{y}}(t) - \bm{y}(t) \right\| \leq K_a \left\| \tilde{\bm y}(t) - J_{\bm {\Gamma}(t)} (\tilde{\bm y}(t)) \right\|.
\end{equation}
Moreover due to \eqref{e.Gbound} and \eqref{Jdiff}, we have for some constant $C>0$ depending on $(\gamma_{i})_{i=1..N}$, $(\gamma_{i}^c)_{i=1..N}$ and ${\bm x}_{f}$ only,
\begin{equation}
\left\| \tilde{\bm{y}}(t) - J_{\bm{\Gamma}(t)} (\tilde{\bm y}(t)) \right\|
\leq C \| \tilde{\bm y}(t) - \bm{\Gamma}(t)\|^2 \leq C \frac{\max\vert \gamma_i^c \vert^2 \, \tau^2}{\| {\bm x}_{f} - {\bm x}_{0} \|^2}.
\end{equation}
Again, this is less than $D/16$ for suitably small $D$.
Going back to \eqref{LocalisationY}, this gives $y_{i}(t) \in {\mathcal Y}_{i}$ in all configurations. \par
\ \par
\noindent
\textbf{3. Disjunction of the sets ${\mathcal X}_{i}$ and ${\mathcal Y}_{j}$.}
It remains to show that all the ${\mathcal X}_{i}$ and ${\mathcal Y}_{j}$ are disjoint. To do so, we estimate:
\begin{equation*}
\left|\underline{\tilde{y}}_{i}(t) - \underline{x}_{i}(t) \right| 
= \frac{|\gamma_{i}^c|\tau}{|\overline{x}_{f,i}-\overline{x}_{0,i}|} \geq  \frac{|\gamma_{i}^c|\tau}{D}.
\end{equation*}
The choice \eqref{DefTauRho} of $\tau$ makes this term larger than or equal to $D$. Hence we deduce that $\mbox{dist}({\mathcal X}_{i},{\mathcal Y}_{i}) \geq \frac{3D}{4}$, for all $i$. Finally, since in the other direction
\begin{equation*}
\left|\underline{\tilde{y}}_{i}(t) - \underline{x}_{i}(t) \right| 
= \frac{|\gamma_{i}^c|\tau}{|\overline{x}_{f,i}-\overline{x}_{0,i}|} \leq 2 \frac{|\gamma_{i}^c|\tau}{D} = \frac{2 |\gamma_{i}^c| }{\min_{j} |\gamma^c_{j}|} D ,
\end{equation*}
we deduce that if $D$ is small enough (depending on ${\bm x}_{f}$), then for all $i$, \[{\mathcal X}_{i} \cup {\mathcal Y}_{i} \subset \overline{B}(\overline{x}_{i,f}, \min_{j \neq k} |\overline{x}_{j,f} - \overline{x}_{k,f}|/4). \] This is enough to ensure that all the sets ${\mathcal X}_{i} \cup {\mathcal Y}_{i}$, $i=1,\ldots,N$, are disjoint.

\end{proof}
%
%
%
%
%
%
%
%
\section{Reduction to a single control vortex}
\label{s.singlecontrol}
In this section, we establish Theorem~\ref{Thm:Main}. Hence we consider the control of the vortex system by means of a single control vortex $z: [0,T] \longrightarrow \R^2$, of intensity $\gamma^c \neq 0$. The evolution of the position of the vortex $x_i$, for $i$ in $\{1,\dots,N\}$, is now governed by System~\eqref{e.intromain}. Incorporating the factor $\frac{1}{2\pi}$ in the intensities $\gamma_{1},\ldots,\gamma_{N},\ \gamma^c$, and using $K$ the Biot-Savart kernel \eqref{Biot-Savart}, System~\eqref{e.intromain} is written as
\begin{equation}\label{e.ode1control}
\left\{
\begin{array}{rcl}
\displaystyle  \frac{dx_i}{dt}(t) &=& \displaystyle  \sum_{j \neq i} \gamma_j K* \delta_{x_j(t)}(x_i(t)) + \gamma^c K* \delta_{z(t)}(x_i(t)), \\
x_i(0) &=& x_{0,i}.
\end{array}
\right.
\end{equation}
The strategy is as follows. First, we introduce a reference solution $\bm{\overline{x}}$ with $N$ controls obtained by means of Theorem~\ref{t.NvortexNcontrols}.
Then we consider a solution of \eqref{e.ode1control} with an oscillating control that mimics the action of these $N$ controls. The proof consists then in comparing these two solutions a prove that they are suitably close for a sufficiently fast oscillating control. This will lead us to an approximate controllability result. Finally, we establish a local exact controllability result following the same lines (however with a simplified reference solution) and using a topological argument.

\subsection{A single oscillating control} \label{s.defcontrol}
We fix $T>0$, $\bm{x_0} \in (\R^{2})^N$, $\bm{x_f} \in (\R^2)^N$ and $y_{0}$ the initial position of the control vortex.  \par
\ \par
\noindent
\textbf{1. Reference solutions and controls.}
To begin with, we apply Theorem~\ref{t.NvortexNcontrols} with $N$ control vortices of intensity $\gamma^c/N$, where $\gamma^c$ is the intensity of the single control $z$ in \eqref{e.ode1control}. These $N$ control vortices are initially placed at $y_{0}$ for the first one, and arbitrarily away from $x_{0,1}, \ldots, x_{0,N}$, $y_{0}$ and one from another for the others. Hence we obtain reference controls $y_i(t)$, $i$ in $\{1,\ldots,N\}$ and reference solutions $\bm{\overline{x}}$ of \eqref{e.odeN} that go from $\bm{x_0}$ at time $0$ to $\bm{x_f}$ in time $T$.
These reference solutions satisfy, for $i \in \{1,\dots,N\}$,
\begin{equation} \label{e.ODExav}
\left\{
\begin{array}{rcl}
\displaystyle  \frac{d\overline{x}_i}{dt}(t) &=&  \displaystyle  \sum_{j \neq i} \gamma_j K* \delta_{\overline{x}_j(t)}(\overline{x}_i(t)) + \frac{\gamma^c}{N} \sum_{k=1}^N K* \delta_{y_k(t)}(\overline{x}_i(t)), \\
\overline{x}_i(0) &=& x_{0,i}.
\end{array}
\right.
\end{equation}
Note that Theorem~\ref{t.NvortexNcontrols} ensures that one can find $\overline{r}>0$ such that for all $t \in [0,T]$, 
\begin{equation} \label{e.ymin}
\min_{j,k=1\ldots N}|y_k(t) - \overline{x}_j(t)| \geq 2\overline{r} \ \text{ and }\ 
\min_{i\neq j}|\overline{x}_i(t) - \overline{x}_j(t)| \geq 4 \overline{r}.
\end{equation}
In the next step we introduce a sequence of controls $(z_n)_{n\in \N^*}$ oscillating between these $N$ reference control trajectories $(y_1,\dots, y_N)$. This follows the ideas of Filippov's convex integration \cite{Filippov:1967}; see also for instance \cite{Bressan:2019} regarding the controllability of a flock of animals by a repelling agent for similar ideas. \par
\ \par
\noindent
\textbf{2. General form of the control.}
Let us give a precise definition of this oscillating control on $[0,T]$.
For $n$ in $\N^*$, we divide the interval $I:=[0,T]$ in $n$ intervals of size

\begin{equation} \label{DeltaT}
\Delta T := \frac{T}{n},
\end{equation}
that is, we set
\begin{equation*}
I_i := [t_{i-1},t_i]=\left[ (i-1)\Delta T, i\Delta T \right] \ \text{ for } i \text{ in } \{1,\ldots,n\}.
\end{equation*}
Each interval $I_i$ in then subdivided in $N$ subintervals of size
\begin{equation} \label{deltaT}
\delta T := \frac{\Delta T}{N} = \frac{T}{nN},
\end{equation}
and we define accordingly
\begin{equation*}
I_{i,k} := [t_{i-1,k-1},t_{i-1,k}] = \left[ (i-1)\Delta T + (k-1) \delta T , (i-1)\Delta T + k \delta T \right]
\ \text{ for } k \text{ in } \{1,\ldots,N\}.
\end{equation*}
The general idea would be to consider the control
\begin{equation} \nonumber
\widehat{z}_n(t) := \sum_{i=1}^n \sum_{k=1}^N \mathds{1}_{I_{i,k}}(t) y_k(t)  \text{ for } t \in [0,T].
\end{equation}
However, to achieve a continuous trajectory, we split each of the intervals $I_{i,k}$ in two parts:
\begin{equation*}
I_{i,k}^w := \left[t_{i-1,k} - \frac{\delta T}{2n},t_{i-1,k} + \frac{\delta T}{2n} \right] \ \text{ and } \ 
I_{i,k}^c := \left[t_{i-1,k-1}+\frac{\delta T}{2n} ,t_{i-1,k} - \frac{\delta T}{2n}\right],
\end{equation*}
where $I_{i,k}^c$ is used to let the control at $y_k$, and  $I_{i,k}^w$ aims at ensuring a continuous transition between $y_k$ and $y_{k+1}$.
Moreover we set $I_{1,1}^c := \left[0, t_{0,1} - \frac{\delta T }{2n} \right], I_{n,N}^c :=  \left[t_{n-1,N-1}+\frac{\delta T}{2n} , T\right]$ and $I_{n,N}^w := \emptyset$, so that we have $I = \displaystyle \bigcup_{i=1}^n \bigcup_{k=1}^N \left( I_{i,k}^c \cup I_{i,k}^w \right)$. \par
%
%
%
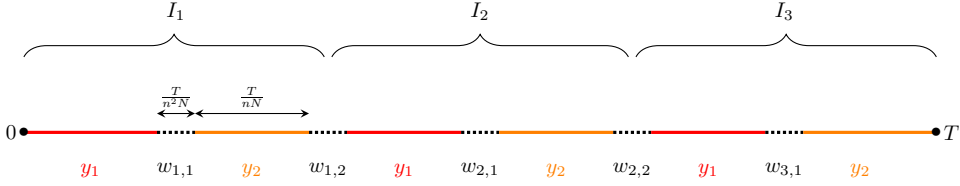
\begin{figure}[!ht]
\begin{center}
\begin{tikzpicture}[scale=0.5,every node/.style={scale=.8}]

\draw[color=red][very thick] (0,0) -- (3.5,0);
\draw[color=red] (1.75,-1) node{$y_1$};
\draw[densely dotted][very thick] (3.5,0) -- (4.5,0);
\draw (4,-1) node{$w_{1,1}$};
\draw[color=orange][very thick] (4.5,0) -- (7.5,0);
\draw[color=orange] (6,-1) node{$y_2$};

\draw[decoration={brace,amplitude=3mm}, decorate] (0,2) -- (7.9,2) coordinate[midway] (mid);
\draw (4,3.2) node{$I_1$};
\draw[decoration={brace,amplitude=3mm}, decorate] (8.1,2) -- (15.9,2) coordinate[midway] (mid);
\draw (12,3.2) node{$I_2$};
\draw[decoration={brace,amplitude=3mm}, decorate] (16.1,2) -- (24,2) coordinate[midway] (mid);
\draw (20,3.2) node{$I_3$};

\draw[<->,>=stealth] (3.5,0.5) -- (4.5,0.5);
\draw[<->,>=stealth] (4.5,0.5) -- (7.5,0.5);
\draw (4,0.5) node[above,scale=0.8]{$\frac{T}{n^2N}$};
\draw (6,0.5) node[above,scale=0.8]{$\frac{T}{nN}$};

\draw[densely dotted][very thick] (7.5,0) -- (8.5,0);
\draw (8,-1) node{$w_{1,2}$};
\draw[color=red][very thick] (8.5,0) -- (11.5,0);
\draw[color=red] (10,-1) node{$y_1$};
\draw[densely dotted][very thick] (11.5,0) -- (12.5,0);
\draw (12,-1) node{$w_{2,1}$};
\draw[color=orange][very thick] (12.5,0) -- (15.5,0);
\draw[color=orange] (14,-1) node{$y_2$};

\draw[densely dotted][very thick] (15.5,0) -- (16.5,0);
\draw (16,-1) node{$w_{2,2}$};
\draw[color=red][very thick] (16.5,0) -- (19.5,0);
\draw[color=red] (18,-1) node{$y_1$};
\draw[densely dotted][very thick] (19.5,0) -- (20.5,0);
\draw (20,-1) node{$w_{3,1}$};
\draw[color=orange][very thick] (20.5,0) -- (24,0);
\draw[color=orange] (22,-1) node{$y_2$};

\draw (0,0) node[left]{$0$};
\draw (0,0) node{$\bullet$}; 
\draw (24,0) node[right]{$T$};
\draw (24,0) node{$\bullet$};

\end{tikzpicture}
\end{center}
\caption{Time intervals and associated control value for $\protect N=2$ and $ \protect n=3$.}
\label{f.intervals}
\end{figure}
%
%
%
For each $i$ and $k$, we introduce a smooth transition curve $w_{i,k}^n : \ I_{i,k}^w \longrightarrow \ \R^2$ that connects $y_{k}(t_{i-1,k}-\frac{\delta T}{2n})$ to $y_{k+1}(t_{i-1,k}+\frac{\delta T}{2n})$ in a time $\frac{T}{n^2N}$. The precise construction of this transition curve is given in the next paragraph. 
When these trajectories are determined we define, for $n$ in $\N^*$, the continuous trajectory $z_n^0$:
\begin{equation} \label{e.control0}
z_n^0(t) := \sum_{i=1}^n \sum_{k=1}^N \left(\mathds{1}_{I_{i,k}^c}(t) y_k(t) + \mathds{1}_{I_{i,k}^w}(t) w_{i,k}^n(t) \right) \text{ for } t \in [0,T].
\end{equation}
The control $z_n$ will finally be defined as a regularization of $z_n^0$. \par
\ \par
\noindent
\textbf{3. Precise form of the transition curves.} We fix $1\leq i \leq n$ and $1 \leq k \leq N$; let us detail the construction of the Lipschitz transition curves $w_{i,k}^n \in C(I_{i,k}^w)$. To lighten the notation, we will temporarily write $t_1 := t_{i-1,k}-\frac{\delta T}{2n}$ and $t_2:= t_{i-1,k}+\frac{\delta T}{2n}$. \par
To connect $y_{k}(t_1)$ to $y_{k+1}(t_2)$, we consider the set of balls of center $\overline{x}_j(t_1)$ and of radius $3\overline{r}/2$ for $j\in \{1,\ldots,N\}$, which are all disjoint according to \eqref{e.ymin}. 
Moreover $y_k(t_1)$ does not belong to any of these balls according to \eqref{e.ymin}. 
We construct a trajectory $w_{i,k}^n$ avoiding the balls $B_{\overline{x}_j(t_1)}(3\overline{r}/2)$, $j \in \{1,\ldots,N\}$. 
To this end, we first define a Lipschitz trajectory $w_{i,k}^{n,\text{pm}}$ joining $y_{k}(t_1)$ to $y_{k+1}(t_2)$ along	 a straight line, and circumventing the ball $B_{\overline{x}_j(t_1)}(3\overline{r}/2)$ by following its boundary if the line crosses it, see Figure~\ref{f.wtrajectory}. We choose an arbitrary sense to follow the circle: for instance, we choose the shortest arc, and the clockwise direction when both arcs have the same length. The resulting curve is followed at constant speed. \par
%
%
\begin{figure}[!h]
\begin{center}
\begin{tikzpicture}[scale=0.6,every node/.style={scale=.8}]
\draw (2,6) -- (14,2);
\draw (2,6) node{$\bullet$};
\draw (14,2) node{$\bullet$};
\draw (2,6) node[above]{$y_{k}(t_1)$};
\draw (14,2) node[below]{$y_{k+1}(t_2)$};
\draw[white, fill=gray!30] (5,5) circle(1.5);
\draw[white, fill=gray!30] (11,4) circle(1.5);
\draw[gray] (5,3.5) node[below]{$B_{\overline{x}_i(t_1)}(\frac{3\overline{r}}{2})$};
\draw[gray] (5,5) node{$\bullet$};
\draw[gray] (5,5) node[above]{$\overline{x}_i(t_1)$};
\draw[gray] (11,4) node[above]{$\overline{x}_j(t_1)$};
\draw[gray] (11,4) node{$\bullet$};
\draw[gray] (11,2.5) node[below]{$B_{\overline{x}_j(t_1)}(\frac{3\overline{r}}{2})$};
\draw (6.43,4.54) arc (-18:162:1.5);
\draw (9.59,3.48) arc(-160:-57:1.5);
\draw (8,4) node[above]{$w_{i,k}^{n,\text{pm}}$};
\end{tikzpicture}
\end{center}
\caption{Construction of the trajectory $\protect w_{i,k}^{n,\text{pm}}$.}
\label{f.wtrajectory}
\end{figure}
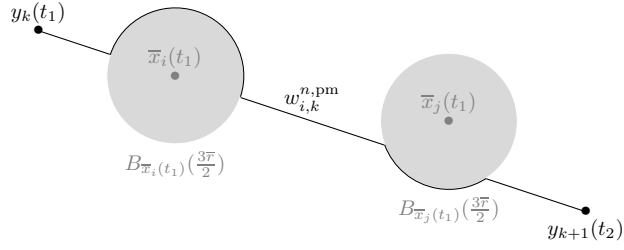
Let us remark that the trajectory $z_n^0$ defined by \eqref{e.control0} is Lipschitz on $[0,T]$; we call $L(n)$ its best Lipschitz constant. One can readily check that this $L(n)$ may be estimated, for some geometrical constant $C^{\pi}$, by
\begin{equation} \label{e.Lipschitzn}
L(n) \leq \frac{C^{\pi} Nn^2}{T} \sup_{k \in \{1,\dots,N\}} \|y_k\|_{C^1([0,T])}.
\end{equation} 
\par
\ \par
\noindent
\textbf{4. Regularization of the trajectory $z_n^0$.} Now we introduce the regularized control sequence $z_n$ from the Lipschitz construction $z_n^0$ above. Let $(\rho_{\varepsilon})_{\varepsilon>0}$ a mollifying sequence obtained from rescaling $\rho \in C_{c}^{\infty}((0,T),\R)$, $\rho \geq 0$, of integral $1$, and of support in $(-\varepsilon, \varepsilon)$ for $\varepsilon$ sufficiently small. For $n$ in $\N^*$, we define the function $z_{n}$ in  $C^{\infty}([0,T],\R^2)$ by
\begin{equation} \label{e.control}
z_n (t) := \rho_{\frac{1}{n^3}} * z_n^0 (t) \ \text{ in } [0,T],
\end{equation}
where we extended $z_{n}^0$ by  $z_{n}^0(0)$ (respectively  $z_{n}^0(T)$) on $(-\infty,0)$ (resp. $(T+\infty)$). 
Notice that since $\frac{1}{n^3} \ll \frac{1}{n}$ and since each $y_{k}$ is constant in $I_{1,1}^c$ and in $I_{n,N}^c$, we have $z_n (0)=z_n^0(0)$ and  $z_n (T)=z_n^0(T)$ for $n$ sufficiently large. \par
\ \par
\noindent
\textbf{5. A property of the control $z_n$.} We now state a property of the control $z_n$ following from its construction.
\begin{Lemm} \label{t.wconnection}
There exists $n_0 \in \N^*$ such that for all $n \geq n_0$, for all $t \in [0,T]$, and for all $j \in \{1,\ldots,N\}$,
\begin{equation} \label{e.wconnection}
|z_n(t) - \overline{x}_j(t) | \geq \overline{r},
\end{equation}
where $\bm{\overline{x}}$ is the trajectory solution of \eqref{e.ODExav} and  $\overline{r} > 0$ was introduced in \eqref{e.ymin}. 
\end{Lemm}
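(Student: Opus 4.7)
The plan is to treat separately the three types of behavior of $z_n$ on $[0,T]$: the ``core'' phases where $z_n^0$ coincides with one of the reference controls $y_k$, the ``transition'' phases where $z_n^0$ equals a curve $w_{i,k}^n$, and the effect of the mollification $\rho_{1/n^3}*$ that turns $z_n^0$ into $z_n$. In each case I will show that $|z_n(t)-\overline{x}_j(t)|\ge \overline{r}$ for $n$ large enough, uniformly in $t$ and $j$.

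First, I would record a uniform Lipschitz bound on the reference trajectories $\overline{x}_j$: since the $y_k$ and the $\overline{x}_j$ stay at mutual distances at least $2\overline{r}$ by \eqref{e.ymin} and since the right-hand side of \eqref{e.ODExav} is a sum of $K$-terms whose denominators are thus uniformly bounded below, there is a constant $M>0$ with $\|\dot{\overline{x}}_j\|_{L^\infty([0,T])} \le M$ for all $j$. Then for $t\in I_{i,k}^c$ we have $z_n^0(t)=y_k(t)$, so by \eqref{e.ymin} directly
\[
|z_n^0(t)-\overline{x}_j(t)| \ge 2\overline{r}.
\]
For $t\in I_{i,k}^w$, set $t_1=t_{i-1,k}-\delta T/(2n)$. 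By the very construction of $w_{i,k}^n$ (a straight segment circumventing each ball $B_{\overline{x}_j(t_1)}(3\overline{r}/2)$ along its boundary), the trajectory stays at Euclidean distance at least $3\overline{r}/2$ from every $\overline{x}_j(t_1)$. Combining with the Lipschitz bound and the fact that $|t-t_1|\le T/(n^2N)$ on $I_{i,k}^w$ gives
\[
|w_{i,k}^n(t)-\overline{x}_j(t)| \ge \frac{3\overline{r}}{2} - \frac{MT}{n^2N} \ge \frac{5\overline{r}}{4}
\]
for $n$ large enough, uniformly in $i,k,j,t$.

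Together these two estimates yield $|z_n^0(t)-\overline{x}_j(t)|\ge 5\overline{r}/4$ for all $t\in[0,T]$ and all $j$, once $n$ is large. It remains to transfer this estimate to $z_n$. Here I would use the Lipschitz bound \eqref{e.Lipschitzn}: since $z_n^0$ has Lipschitz constant at most $L(n) \le C^\pi N n^2 T^{-1} \sup_k\|y_k\|_{C^1}$ and $z_n = \rho_{1/n^3}*z_n^0$ with $\mathrm{supp}\,\rho_{1/n^3}\subset(-n^{-3},n^{-3})$, a standard estimate on convolution with a Lipschitz function gives
\[
\|z_n - z_n^0\|_{L^\infty([0,T])} \le L(n)\cdot n^{-3} \le \frac{C^\pi N \sup_k\|y_k\|_{C^1}}{Tn},
\]
which tends to $0$ as $n\to\infty$. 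Hence for $n\ge n_0$ large enough the perturbation is less than $\overline{r}/4$, and the triangle inequality gives $|z_n(t)-\overline{x}_j(t)|\ge 5\overline{r}/4 - \overline{r}/4 = \overline{r}$.

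The main obstacle is the transition phase: one needs the design of $w_{i,k}^n$ to produce a bound at the \emph{frozen} time $t_1$, while the target bound involves $\overline{x}_j(t)$ at the current time. The key quantitative point is that the transition windows have length $\delta T/n = T/(n^2N)$, which is of order $n^{-2}$, so the displacement of $\overline{x}_j$ during a window is $O(n^{-2})$ and hence negligible compared to the gap $3\overline{r}/2 - 2\overline{r} \cdot 0 = 3\overline{r}/2$ between the design distance and the desired threshold $\overline{r}$. The regularization estimate, thanks to $n^{-3}\ll n^{-2}\ll n^{-1}$, takes care of the remaining mollification error.
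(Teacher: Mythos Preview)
Your proposal is correct and follows essentially the same approach as the paper: bound $|z_n^0(t)-\overline{x}_j(t)|$ separately on the core intervals (where \eqref{e.ymin} gives $2\overline{r}$) and on the transition intervals (where the design of $w_{i,k}^n$ gives $3\overline{r}/2$ at the frozen time $t_1$, and the short window length absorbs the drift of $\overline{x}_j$), then use the Lipschitz bound \eqref{e.Lipschitzn} and the $n^{-3}$ mollification scale to pass from $z_n^0$ to $z_n$. The only cosmetic difference is that you invoke an explicit Lipschitz bound $M$ on $\overline{x}_j$ (obtained from the boundedness of the right-hand side of \eqref{e.ODExav}), whereas the paper simply appeals to uniform continuity of $\overline{\bm{x}}$; the resulting constants ($5\overline{r}/4$ versus the paper's $7\overline{r}/6$) are immaterial.
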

\begin{proof}[Proof of Lemma~\ref{t.wconnection}]
Using  $\mbox{Supp} \, \rho_{\frac{1}{n^3}} \subset(-\frac{1}{n^3},\frac{1}{n^3})$, we deduce for $t$ in $[0,T]$
\begin{equation*}
|z_n(t) - z_n^0(t)| \leq  \int_{\R} \rho_{\frac{1}{n^3}}(s) \left| z_n^0(t-s)- z_n^0(t) \right| \dd s \leq \frac{2L(n)}{n^3},
\end{equation*}
and thus with \eqref{e.Lipschitzn} the uniform estimate

\begin{equation} \label{e.speed}
\sup_{t\in [0,T]} |z_n(t) - z_n^0(t)| \leq \frac{2C^{\pi}N}{nT} \sup_{k \in \{1,\dots,N\}} \|y_k\|_{C^1([0,T])}.
\end{equation}

Now for $t$ in $I_{i,k}^c$, for all $i$ in $\{1,\dots,n\}$ and $k$ in $\{1,\dots,N\}$, according to \eqref{e.ymin} and to the construction of $z_n^0$, we have $|z_n^0(t) - \overline{x}_j(t)| \geq 2 \overline{r}$; consequently with \eqref{e.speed}, we see that for $n$ sufficiently large, \eqref{e.wconnection} holds on these intervals.\par
Concerning intervals $I_{i,k}^w$, we use the uniform continuity of $\bm{\overline{x}}$ on $[0,T]$ to deduce the existence of $n_u$ such that for all $n \geq n_{u}$, for all $t, t^{\prime} \in [0,T]$ with $|t-t^{\prime}| \leq \frac{\delta T}{n} = \frac{T}{n^2 N}$, for all $j \in \{1,\ldots,N\}$, 
\begin{equation*}
|\overline{x}_j(t) - \overline{x}_j(t^{\prime}) | \leq \overline{r}/3.
\end{equation*}
Hence for $n \geq n_u$, for all $t \in I_{i,k}^w$, for all $j \in \{1,\ldots,N\}$, $|\overline{x}_j(t) - \overline{x}_j(t_{i-1,k}-\frac{\delta T}{2n}) | \leq \overline{r}/3$.  As by construction $|z_n^0(t) - \overline{x}_j(t_1) | \geq 3\overline{r}/2$, we have that for all $t \in I_{i,k}^w$, 
\begin{equation} \label{e.wmin}
|z_n^0(t) - \overline{x}_j(t) | \geq\frac{7\overline{r}}{6}.
\end{equation}
From \eqref{e.speed} we deduce the existence of $n_w$ such that for $n$ sufficiently large, \eqref{e.wconnection} holds as well in intervals  $I_{i,k}^c$. This ends the proof of Lemma~\ref{t.wconnection}.
\end{proof}
\subsection{Approximate controllability}
\label{Subsec:AC}
\subsubsection{Statements}
We now focus on the trajectory $\bm{x^n}$ solution of the vortex system controlled by $z_n$, the control introduced in \eqref{e.control}. For $n$ in $\N^*$ and $i$ in $\{1,\ldots,N\}$, we denote $x_i^n$ the solution of
\begin{equation}\label{e.ODEzn}
\left\{
\begin{array}{cl}
\displaystyle \frac{dx_i^n}{dt}(t) &= \displaystyle \sum_{j \neq i} \gamma_j K* \delta_{x_j^n(t)}(x_i^n(t)) + \gamma^c K* \delta_{z_n(t)}(x_i^n(t)), \\
x_i^n(0) & = x_{0,i}.
\end{array}
\right.
\end{equation}
The goal of this subsection is to establish the following result concerning the approximate controllability of the system.
In the following we let $\bar{r}$ denotes the constant introduced in \eqref{e.ymin}.
\begin{Theo} \label{t.approcontrol}
There exists $n'_{0} \in \N^*$ and $C>0$ depending only on $\overline{r}$ and $T$, 
such that for any $n \geq n'_{0}$, the solution ${\bm x}^n$ is defined globally in $[0,T]$ and satisfies: 
\begin{equation} \label{Eq:xxbarre}
\| {\bm x}^n -\overline{\bm x}\|_{C^0([0,T])} \leq \frac{C}{n},
\end{equation}
where $\bm{\bar{x}}$ is the reference solution of~\eqref{e.ODExav}.
Hence System~\eqref{e.ode1control} is approximately controllable in arbitrary time.
\end{Theo}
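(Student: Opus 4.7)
\textbf{Proof plan for Theorem~\ref{t.approcontrol}.} The strategy is a Gr\"onwall comparison between $\bm{x}^n$ and $\overline{\bm{x}}$ run in parallel with a continuous induction ensuring that $\bm{x}^n$ stays defined and does not collide with $z_n$ or between its components. Define the error $\bm{e}^n(t) := \bm{x}^n(t) - \overline{\bm{x}}(t)$. So long as $\|\bm{e}^n(t)\|_\infty \leq \overline{r}/2$, we have, using \eqref{e.ymin} and Lemma~\ref{t.wconnection}, that $|x_i^n(t)-x_j^n(t)| \geq 3\overline{r}$ and $|x_i^n(t)-z_n(t)| \geq \overline{r}/2$, so the vector field in \eqref{e.ODEzn} is well-defined and smooth with uniformly Lipschitz behavior on this tube. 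I will bootstrap this bound: prove it on every subinterval on which it holds, then extend by continuity.

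Subtracting \eqref{e.ODExav} from \eqref{e.ODEzn} and writing $\bm{e}^n(t)=\int_0^t[\cdots]\,\mathrm{d}s$, I split the integrand into two contributions. The first is a \emph{Lipschitz part} coming from $\sum_{j\neq i}\gamma_j[K(x_i^n-x_j^n)-K(\overline{x}_i-\overline{x}_j)]+\gamma^c[K(x_i^n-z_n)-K(\overline{x}_i-z_n)]$, which, thanks to the uniform bounds $|x_i^n-x_j^n|,|x_i^n-z_n|\geq \overline{r}/2$, is dominated pointwise by $C(\overline{r})\|\bm{e}^n(s)\|$. The second is the \emph{oscillation part}
\begin{equation*}
\mathrm{osc}_i(s) := \gamma^c K\bigl(\overline{x}_i(s)-z_n(s)\bigr) - \frac{\gamma^c}{N}\sum_{k=1}^N K\bigl(\overline{x}_i(s)-y_k(s)\bigr),
\end{equation*}
which does \emph{not} involve $\bm{e}^n$ and must be shown to have integral $O(1/n)$ uniformly in $t$. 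Once this is done, Gr\"onwall's inequality will close the loop and yield \eqref{Eq:xxbarre}; plugging back into the continuation argument we obtain that $\|\bm{e}^n\|_\infty\leq \overline{r}/2$ for $n$ large, hence $\bm{x}^n$ exists on all of $[0,T]$.

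The main step, therefore, is the estimate $\sup_{t\in[0,T]}\bigl|\int_0^t \mathrm{osc}_i(s)\,\mathrm{d}s\bigr|=O(1/n)$. I will analyze this interval by interval. On a full mesh interval $I_i$ of length $\Delta T=T/n$, split according to the decomposition $I_i=\bigcup_k(I_{i,k}^c\cup I_{i,k}^w)$. On the transition windows $I_{i,k}^w$ of total length $\Delta T/n$, Lemma~\ref{t.wconnection} keeps $K(\overline{x}_i-z_n)$ bounded, so the contribution is $O(\Delta T/n)$. On each $I_{i,k}^c$, $z_n(s)=y_k(s)$ exactly, so the oscillation integrand reduces to a Riemann-sum-type expression. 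Using that $\overline{x}_i$, $y_k$ are $C^1$ on $[0,T]$, a first-order Taylor expansion of $K(\overline{x}_i(\cdot)-y_k(\cdot))$ around a fixed $t_{i-1}\in I_i$ reduces the comparison with $\frac{1}{N}\sum_k\int_{I_i}K(\overline{x}_i-y_k)\,\mathrm{d}s$ to the area of a fixed value times $(|I_{i,k}^c|-\Delta T/N)=-\Delta T/(nN)$, plus remainders of order $\Delta T^2$. All in all, each $I_i$ contributes $O(1/n^2)$; summing the $n$ intervals gives $O(1/n)$, and the trailing partial interval at a general $t$ adds at most $O(\Delta T)=O(1/n)$.

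The main technical point — and the step I expect to require the most care — is ensuring the Taylor/Riemann-sum cancellation on each $I_i$ rigorously with uniform constants depending only on $\overline{r}$ and $\|y_k\|_{C^1}$ and $\|\overline{x}_j\|_{C^1}$, while simultaneously using Lemma~\ref{t.wconnection} to bound $K$ and its derivatives uniformly on the relevant configurations. Once this cancellation is established, the rest is routine: Gr\"onwall with the uniform Lipschitz estimate gives $\|\bm{e}^n\|_{C^0([0,T])}\leq C(\overline{r},T)/n$, which improves for $n$ large past the threshold of the continuous induction, and the statement about approximate controllability in arbitrary time follows by taking $n\to\infty$ using $\overline{\bm{x}}(T)=\bm{x_f}$.
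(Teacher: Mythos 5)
Your proposal is correct, and its global architecture --- a Gr\"onwall comparison between $\bm{x}^n$ and $\overline{\bm{x}}$ coupled with a continuous-induction bootstrap on the separation conditions $|x_i^n-x_j^n|,\ |x_i^n-z_n|\gtrsim\overline{r}$ --- is exactly that of the paper (the bootstrap is Hypothesis~H\ref{h.hyp} and the times $T_n$; the Gr\"onwall step is Proposition~\ref{t.xnconvergence}). Where you genuinely diverge is in the proof of the key averaging estimate $\sup_{t}\bigl|\int_0^t \mathrm{osc}_i(s)\,\mathrm{d}s\bigr|=O(1/n)$. The paper isolates this as a quantitative weak-$*$ convergence statement, Lemma~\ref{t.weakconv}: for test functions $\phi$ in $BV(0,T;C_0)\cap L^1(0,T;C^1_0)$, the pairing of $\delta_{z_n(\cdot)}-\overline{\delta}_{(\cdot)}$ against $\phi$ is $O(1/n)$, proven by comparing $z_n^0$ with its $N$ cyclic time-shifts $\tilde z_n^j$ (whose average of Dirac masses is exactly $\overline{\delta}$) and controlling the shift error through the $BV$-in-time and $C^1$-in-space moduli of $\phi$. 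You instead perform a direct Riemann-sum/Taylor cancellation on each mesh interval $I_i$, using that $s\mapsto K(\overline{x}_i(s)-y_k(s))$ is $C^1$ with norms controlled by $\overline{r}$, $\|y_k\|_{C^1}$ and $\|\overline{\bm x}\|_{C^1}$ thanks to \eqref{e.ymin}, obtaining $O(1/n^2)$ per interval and $O(1/n)$ after summation. Both are valid; yours is more elementary and tailored to the smooth data at hand, while the paper's lemma handles rougher ($BV$) test functions --- which it needs because its test function $\mathds{1}_{[0,t[}(s)\,K_{\eta}(\overline{x}_j(s)-y)$ jumps at $s=t$, a role played in your argument by the explicit $O(\Delta T)$ bound on the trailing partial interval. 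Two small points to tighten: (i) on $I_{i,k}^c$ the actual control is the mollified $z_n=\rho_{1/n^3}*z_n^0$, which coincides with $y_k$ only up to an error $O(L(n)/n^3)=O(1/n)$ by \eqref{e.speed}; since $K$ is Lipschitz at distances $\geq\overline{r}/2$ this contributes another harmless $O(1/n)$; (ii) the oscillation term must enter Gr\"onwall as $\sup_{t'}\bigl|\int_0^{t'}\mathrm{osc}_i\bigr|$, i.e.\ a bound on the primitive rather than on the $L^1$ norm of $\mathrm{osc}_i$ (which is only $O(1)$) --- your formulation already places the absolute value correctly outside the integral.
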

The proof of Theorem~\ref{t.approcontrol} uses a continuous induction argument relying on a local result stated in Proposition~\ref{t.xnconvergence} below. For this argument, we introduce a hypothesis depending on some time $\tau>0$ and some integer $n_{1}$ as follows. 
\begin{Hypo}[H\ref{h.hyp}($\tau$,$n_{1}$)] \label{h.hyp}
For all $n \geq n_1$, the trajectory $\bm{x^n}$ solution of \eqref{e.ODEzn} is well defined on $[0,\tau]$ and  for all $t$ in $[0,\tau]$,
\begin{equation*}
\forall i \in \{1,\ldots,N\}, \  |z_n(t) - x_i^n(t) |\geq \overline{r}/2 \ \text{ and } \
\forall i,j \in \{1,\ldots,N\}, \  i\neq j, \  |x_j^n(t) - x_i^n(t) |\geq \overline{r}/2.
\end{equation*}
\end{Hypo}
We now state a convergence result valid when this hypothesis holds. 
We let $n_0$ denote the integer introduced in Lemma~\ref{t.wconnection}. 
\begin{Prop} \label{t.xnconvergence}
If Hypothesis H\ref{h.hyp}($\tau$,$n_{1}$) holds for given $\tau>0$ and $n_{1} \geq n_{0}$, then there exists $\tilde{C}>0$ depending only on $\bar{r}$ such that 
for all $n\geq n_1$, for all $i$ in $\{1,\ldots,N\}$,
\begin{equation} \label{e.xnconvergence}
\max_{t \in [0,\tau]} |x^n_i(t) - \overline{x}_i(t)| \leq \frac{\tilde{C}}{n},
\end{equation}
where $\bm{\overline{x}}$ is the reference solution of \eqref{e.ODExav}.
\end{Prop}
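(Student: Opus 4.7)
The plan is to compare the ODE satisfied by $\bm{x}^n$ with the reference ODE \eqref{e.ODExav} satisfied by $\bm{\overline{x}}$, and show that under Hypothesis~H\ref{h.hyp}$(\tau,n_1)$ the oscillating control $z_n$ approximates, in an averaged sense over short time intervals, the action of the $N$ control vortices $(y_1,\ldots,y_N)$ spread with weight $\gamma^c/N$. Write $e_i(t) := x_i^n(t) - \overline{x}_i(t)$ and differentiate:
\begin{equation*}
\dot e_i = \sum_{j\neq i}\gamma_j \bigl[K(x_i^n-x_j^n) - K(\overline{x}_i - \overline{x}_j)\bigr] + \gamma^c\bigl[K(x_i^n-z_n) - K(\overline{x}_i - z_n)\bigr] + A_i(t),
\end{equation*}
where the remainder term is
\begin{equation*}
A_i(t) := \gamma^c K(\overline{x}_i(t) - z_n(t)) - \frac{\gamma^c}{N}\sum_{k=1}^N K(\overline{x}_i(t) - y_k(t)).
\end{equation*}
By Hypothesis~H\ref{h.hyp}$(\tau,n_1)$ together with Lemma~\ref{t.wconnection} and \eqref{e.ymin}, all the arguments of $K$ appearing above are bounded below in norm by $\overline{r}/2$, so $K$ is Lipschitz with constant $L = L(\overline{r})$ on the relevant domain. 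Hence the first two brackets are bounded pointwise by $C(\overline{r})\sum_j |e_j(t)|$.

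The core step is to prove that $\sup_{t\in[0,\tau]}\bigl|\int_0^t A_i(s)\,ds\bigr| \leq C/n$. Fix a full sub-interval $I_\ell = [t_{\ell-1},t_\ell]$ of length $\Delta T = T/n$. On each $I_{\ell,k}^c \subset I_{\ell}$ the unregularized control $z_n^0$ equals $y_k$, so using the mollifier estimate \eqref{e.speed} (which gives $\|z_n - z_n^0\|_\infty \leq C/n$) and the Lipschitz character of $K$,
\begin{equation*}
\int_{I_{\ell,k}^c} \gamma^c K(\overline{x}_i(s)-z_n(s))\,ds = \int_{I_{\ell,k}^c}\gamma^c K(\overline{x}_i(s)-y_k(s))\,ds + O\!\left(\frac{|I_{\ell,k}^c|}{n}\right).
\end{equation*}
The transition pieces $I_{\ell,k}^w$ have total length $NT/n^2$ and $K$ is uniformly bounded on them, so they contribute $O(1/n^2)$ to $\int_{I_\ell} A_i$. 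Summing over $k$, the contributions of the $I_{\ell,k}^c$ reconstruct $\int_{I_\ell}\frac{\gamma^c}{N}\sum_k K(\overline{x}_i - y_k)\,ds$ because $|I_{\ell,k}^c| = \Delta T/N + O(1/n^3)$ and each $k$ appears exactly once; this matches the second half of $A_i$ up to the same $O(1/n^2)$ correction. Thus $|\int_{I_\ell} A_i| \leq C/n^2$, and summing over the $n$ intervals (plus bounding the incomplete last interval by $\|A_i\|_\infty \cdot \Delta T = O(1/n)$) yields
\begin{equation*}
\sup_{t \in [0,\tau]}\Bigl|\int_0^t A_i(s)\,ds\Bigr| \leq \frac{C_0}{n}.
\end{equation*}

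Integrating the ODE for $e_i$, using the Lipschitz estimates and integrating $A_i$ by parts (or equivalently writing $e_i(t) = \int_0^t A_i + \int_0^t(\text{Lipschitz terms})$ and then using $\bigl|\int_0^t A_i\bigr| \leq C_0/n$), we obtain
\begin{equation*}
|e_i(t)| \leq \frac{C_0}{n} + C(\overline{r})\int_0^t \sum_j |e_j(s)|\,ds,
\end{equation*}
and Gronwall's lemma gives $\|e_i\|_{L^\infty([0,\tau])} \leq (C_0/n)e^{NC(\overline{r})\tau} \leq \tilde C/n$, with $\tilde C$ depending only on $\overline{r}$ and $\tau \leq T$, as required.

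The main obstacle is getting the correct $1/n$ rate in the averaging estimate: one must keep track that the length mismatch $|I_{\ell,k}^c| - \Delta T/N$ is $O(1/n^3)$ rather than $O(1/n^2)$, that the mollifier width $1/n^3$ is genuinely much smaller than the "constant" sub-intervals of length $\sim 1/n$, and that the regularization error $\|z_n - z_n^0\|_\infty$, although only $O(1/n)$ pointwise, only contributes $O(1/n^2)$ per $I_\ell$ once integrated. These bookkeeping estimates are the quantitative core of Filippov's convex-integration idea in this context.
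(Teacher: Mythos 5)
Your proof is correct in substance and shares the paper's overall architecture --- decompose the error ODE into Lipschitz terms plus an averaging remainder $A_i$, bound $\sup_t|\int_0^t A_i|$ by $C/n$, then apply Gr\"onwall (and you correctly insist on bounding $|\int_0^t A_i|$ rather than $\int_0^t|A_i|$, which is the only viable formulation) --- but it replaces the paper's key technical device by a more elementary one. The paper obtains the averaging estimate from Lemma~\ref{t.weakconv}, a quantitative weak-$\ast$ convergence statement for test functions in $BV(0,T;C_0(\R^2))\cap L^1(0,T;C^1_0(\R^2))$, proved by comparing $z_n^0$ with its cyclic shifts $\tilde z_n^j$ (whose average over $j$ reproduces $\overline{\delta}$ exactly) together with the translation estimate for $BV$ functions; it is then applied to the time-truncated functions $\phi^j_t(s,y)=\mathds{1}_{[0,t[}(s)\,K_\eta(\overline x_j(s)-y)$ built from a regularized kernel, the $BV$ framework being what makes the bound uniform in the truncation time $t$. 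You instead prove the estimate directly, block by block, as a Riemann-sum computation, handling the uniformity in $t$ by the ``incomplete last interval'' remark. Both routes give the $1/n$ rate; yours is shorter and self-contained for this application, while the paper's lemma is a reusable abstract statement.

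Two points in your bookkeeping need adjusting. First, $|I^c_{\ell,k}|=\delta T-\delta T/n$, so the mismatch with $\Delta T/N=\delta T$ is $\delta T/n=T/(Nn^2)=O(1/n^2)$, not $O(1/n^3)$; this is still sufficient, since $O(1/n^2)$ per block over $n$ blocks sums to $O(1/n)$. Second, and more substantively, the step where the $I^c_{\ell,k}$ ``reconstruct'' $\int_{I_\ell}\frac{\gamma^c}{N}\sum_k K(\overline x_i-y_k)$ requires more than the interval lengths and the fact that each $k$ appears once: you are replacing $\int_{I^c_{\ell,k}}g_k$ by $\frac1N\int_{I_\ell}g_k$ with $g_k(s)=K(\overline x_i(s)-y_k(s))$, and the cost of moving the integration domain is the oscillation of $g_k$ over $I_\ell$ times $|I^c_{\ell,k}|$. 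You must therefore invoke explicitly that $g_k$ is Lipschitz in time (since $|\overline x_i-y_k|\ge 2\overline r$ by \eqref{e.ymin} and $\overline x_i,\,y_k\in C^1$), so that its oscillation on an interval of length $T/n$ is $O(1/n)$, yielding the required $O(1/n^2)$ per block. With these two points made precise, the argument is complete.
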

The proof of Proposition~\ref{t.xnconvergence} is the goal of next paragraph. Then we will be able to prove Theorem~\ref{t.approcontrol}. \par

\subsubsection{Proof of Proposition~\ref{t.xnconvergence}}
The proof of Proposition~\ref{t.xnconvergence} relies on a preliminary result which we will state after introducing a few notations.
In the following, $M^1(\R^2)$ denotes the set of bounded measures of $\R^2$ as the dual space of $C_0(\R^2)$ the space of continuous functions that vanish at infinity endowed with the uniform norm.
The set $C^1_0(\R^2)$ denotes the subspace of $C^1 \cap C_{0}$ functions whose derivative belongs to $C_0(\R^2)$, endowed with the norm $\|f\|_{C^1_0(\R^2)} = \sup_{x \in \R^2} |f(x)| + \sup_{x \in \R^2}|\nabla f(x) |$.
In addition we consider the space $BV(0,T;C_0(\R^2))$ of functions of bounded variations in time, with values in $C_0(\R^2)$. For $\phi \in BV(0,T;C_0(\R^2))$, we will write:
\begin{gather*}
|\phi|_{BV(0,T;C_0(\R^2))}  := \sup_{M \geq 1} \ \sup_{0 \leq a_1 < \ldots < a_{M+1} \leq T} \ \sum_{i=1}^M \|\phi(a_{i+1}) - \phi(a_i)\|_{C_0(\R^2)}, \\
\|\phi\|_{BV(0,T;C_0(\R^2))}  := |\phi|_{BV(0,T;C_0(\R^2))} + \|\phi\|_{L^\infty(0,T;C_0(\R^2))} .
\end{gather*}
In the space $\text{L}^{\infty}(0,T;M^1(\R^2))$ of bounded time-dependent Radon measures, we consider the sequence  $(\delta_{z_n(\cdot)})_{n\in\N}$ and introduce 
\begin{equation*}
\overline{\delta}_{(\cdot)} := \frac{1}{N}\sum_{k=1}^N \delta_{y_k(\cdot)}.
\end{equation*}
Then we have the following quantitative weak convergence result.

\begin{Lemm} \label{t.weakconv}
Consider the sequence $(z_n)_{n \in \N^*}$ of $C^{\infty}([0,T],\R^2)^{\N^*}$ defined by \eqref{e.control}. For $\phi$ in $BV(0,T;C_0(\R^2))\cap \text{L}^1(0,T;C_0^1(\R^2))$, there exists $C>0$ depending on $\|y_k\|_{C^1(0,T)}$ for $1\leq k \leq N$, such that for all $n \in \N^*$,
\begin{equation} \label{e.weakconv}
\int_0^T \left| \left\langle \delta_{z_n(t)},\phi(t,\cdot) \right\rangle - \left\langle \overline{\delta}_{(t)},\phi(t,\cdot) \right\rangle  \right| \dd t \leq \frac{C}{n} \left(\|\phi\|_{BV(0,T;C_0(\R^2))} + \|\phi \|_{\text{L}^1(0,T,C^1_0(\R^2))} \right).
\end{equation}
with $\langle.,.\rangle$ the duality bracket between $M^1(\R^2)$ and $C_0(\R^2)$.
\end{Lemm}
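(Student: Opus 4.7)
The plan is to establish the estimate by first removing the mollification (passing from $z_n$ to $z_n^0$), then exploiting the piecewise structure of $z_n^0$ to compare its contribution directly against the averaged measure $\overline{\delta}_{(\cdot)}$, interval by interval.

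First, I would use the uniform estimate \eqref{e.speed} giving $\|z_n - z_n^0\|_{L^\infty(0,T)} \leq C/n$. Combined with a first-order Taylor estimate,
\[
\bigl|\phi(t,z_n(t)) - \phi(t,z_n^0(t))\bigr| \leq \|\nabla \phi(t,\cdot)\|_\infty \cdot |z_n(t)-z_n^0(t)|,
\]
so integrating in time yields a contribution bounded by $(C/n)\|\phi\|_{L^1(0,T;C^1_0(\R^2))}$. This reduces the problem to estimating $\int_0^T |\phi(t,z_n^0(t)) - \frac{1}{N}\sum_k \phi(t,y_k(t))|\,dt$.

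Next I would split each interval $I_i$ into the subintervals $I_{i,k}^c$ (where $z_n^0 = y_k$) and $I_{i,k}^w$ (the transitions). The total length of transition intervals is $nN \cdot \delta T/n = T/n$, so their contribution is bounded by $(T/n)\|\phi\|_{L^\infty(0,T;C_0(\R^2))} \leq (T/n)\|\phi\|_{BV(0,T;C_0(\R^2))}$. On the remaining intervals, I must compare
\[
\sum_{i,k} \int_{I_{i,k}^c} \phi(t, y_k(t))\,dt \quad \text{with} \quad \frac{1}{N}\sum_{i,k} \int_{I_i} \phi(t, y_k(t))\,dt.
\]
Writing $f_k(t):=\phi(t,y_k(t))$, on each $I_i$ the discrepancy for fixed $k$ decomposes as
\[
\int_{I_{i,k}^c} f_k\,dt - \tfrac{|I_{i,k}^c|}{|I_i|}\int_{I_i} f_k\,dt \;+\; \Bigl(\tfrac{|I_{i,k}^c|}{|I_i|} - \tfrac{1}{N}\Bigr)\int_{I_i} f_k\,dt.
\]
The second term is controlled by $(1/n)\|\phi\|_{L^\infty(0,T;C_0(\R^2))}$ per $I_i$ since $|I_{i,k}^c|/|I_i| = 1/N + O(1/n)$; summing over $i$ keeps this $O(1/n)$. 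The first term is the classical estimate on the deviation of an average over a subinterval from the global average, bounded by $|I_{i,k}^c| \cdot \mathrm{osc}_{I_i}(f_k) \leq (\Delta T/N) \cdot |f_k|_{BV(I_i)}$; summing over $i$ telescopes the BV seminorms and yields $(T/(nN)) |f_k|_{BV(0,T)}$.

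The technical point is bounding $|f_k|_{BV(0,T)}$ in terms of the norms appearing on the right-hand side of \eqref{e.weakconv}. For this I would use the standard decomposition
\[
|\phi(t',y_k(t')) - \phi(t,y_k(t))| \leq \|\phi(t',\cdot) - \phi(t,\cdot)\|_{C_0} + \|\nabla \phi(t,\cdot)\|_\infty \cdot \|y_k'\|_\infty |t'-t|,
\]
which, summed over any partition, gives $|f_k|_{BV(0,T)} \leq |\phi|_{BV(0,T;C_0(\R^2))} + \|y_k'\|_\infty \|\phi\|_{L^1(0,T;C^1_0(\R^2))}$. Summing over $k \in \{1,\ldots,N\}$ and absorbing constants depending on $\|y_k\|_{C^1}$ into $C$ completes the proof. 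The main technical obstacle is precisely this last step: propagating the BV hypothesis on $\phi$ (in the $C_0$ norm) through the composition with the trajectories $y_k$, since a priori $y_k$ contributes derivatives that must be controlled using the $L^1(C^1_0)$ norm of $\phi$ rather than the $BV(C_0)$ norm alone.
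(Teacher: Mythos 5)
Your route through the main estimate is genuinely different from the paper's. The paper introduces the cyclically shifted controls $\tilde{z}_n^j(t)=\sum_{i,k}\mathds{1}_{I_{i,k}}(t)y_{k+j}(t)$, uses the pointwise identity $\frac1N\sum_j\delta_{\tilde{z}_n^j(t)}=\overline{\delta}_{(t)}$, and reduces everything to a time translation by $j\delta T$, estimated by the classical bound $\frac{1}{\tau}\int|u(\cdot+\tau)-u|\leq TV(u)$ together with the $C^1$ bound on the $y_k$. You instead compare, on each $I_i$, the average of $f_k:=\phi(\cdot,y_k(\cdot))$ over $I_{i,k}^c$ with its average over $I_i$. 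Both implement the same relaxation idea and both give the $O(1/n)$ rate; yours avoids the boundary error $\mathcal{E}$ and the index bookkeeping. Your first two reductions (removing the mollification via \eqref{e.speed}, and discarding the transition intervals of total measure $O(T/n)$) coincide with the paper's.

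There is, however, one step that fails as written, and it is exactly the one you flag as the technical obstacle. You bound the discrepancy on $I_i$ by $|I_{i,k}^c|\cdot\mathrm{osc}_{I_i}(f_k)$ and then claim $|f_k|_{BV(0,T)}\leq|\phi|_{BV(0,T;C_0(\R^2))}+\|y_k'\|_\infty\|\phi\|_{L^1(0,T;C^1_0(\R^2))}$. The second term of that bound is not valid: summing $\|\nabla\phi(t_i,\cdot)\|_\infty(t_{i+1}-t_i)$ over arbitrary partitions is an upper-Darboux-type sum for $t\mapsto\|\nabla\phi(t,\cdot)\|_\infty$, and for a function that is merely $L^1$ in time its supremum over partitions can be infinite; the hypothesis $\phi\in BV(0,T;C_0(\R^2))$ controls $\phi$ itself, not its spatial gradient, so nothing rules this out. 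The same defect is already present in $\sum_i\mathrm{osc}_{I_i}(f_k)$. The repair is to replace oscillations and pointwise evaluation by a double average:
\begin{equation*}
\left|\int_{I_{i,k}^c}f_k\,\dd t-\frac{|I_{i,k}^c|}{|I_i|}\int_{I_i}f_k\,\dd t\right|
\leq \frac{1}{|I_i|}\int_{I_{i,k}^c}\!\int_{I_i}\big|f_k(s)-f_k(t)\big|\,\dd t\,\dd s,
\end{equation*}
then use $|f_k(s)-f_k(t)|\leq\|\phi(s,\cdot)-\phi(t,\cdot)\|_{C_0(\R^2)}+\|y_k'\|_\infty\,|s-t|\,\|\phi(t,\cdot)\|_{C^1_0(\R^2)}$. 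The gradient factor is now \emph{integrated} in $t$ over $I_i$, so summing over $i,k$ lands exactly on $\frac{T}{n}\max_k\|y_k'\|_\infty\|\phi\|_{L^1(0,T;C^1_0(\R^2))}$, while the first factor is bounded by $TV_{I_i}(\phi)$ and $\sum_iTV_{I_i}(\phi)\leq|\phi|_{BV(0,T;C_0(\R^2))}$. This mirrors how the paper treats its term $h_n$: the gradient is evaluated at the shifted time and then integrated, which is why the $L^1(C^1_0)$ norm suffices there.

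One shared caveat, not specific to your argument: your interval-averaging decomposition, exactly like the paper's change-of-variable identity, controls the difference of the time integrals $\big|\int_0^T(\langle\delta_{z_n(t)},\phi(t,\cdot)\rangle-\langle\overline{\delta}_{(t)},\phi(t,\cdot)\rangle)\dd t\big|$ rather than $\int_0^T|\cdots|\dd t$ as written in \eqref{e.weakconv}. The latter cannot be $O(1/n)$ in general (take $\phi$ independent of time with distinct values at the $y_k$: the integrand is then a fixed nonzero quantity on each $I_{i,k}^c$). The integrated version is the one actually used downstream via the truncated test functions $\phi^j_t$, so your step 3 should be phrased, from the start, as an estimate on the difference of integrals.
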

The proof of Lemma~\ref{t.weakconv} is postponed to Subsubection~\ref{sub.weakconv}. 
Let us now establish Proposition~\ref{t.xnconvergence}.
\begin{proof}[Proof of Proposition~\ref{t.xnconvergence}]
Let us introduce notations for the vector fields appearing in the right-hand sides of \eqref{e.ODEzn} and \eqref{e.ODExav}: 
\begin{equation}\label{e.Fndef}
F_{\bm{x^n},i}(t,\cdot):=  \sum_{j \neq i} \gamma_j K* \delta_{x_j^n(t)} + \gamma^c K* \delta_{z_n(t)} \text{ and } 
\overline{F}_{\bm{\bar{x}},i}(t,\cdot) :=  \sum_{j \neq i} \gamma_j K* \delta_{\overline{x}_j(t)} + \frac{\gamma^c}{N} \sum_{k=1}^N K* \delta_{y_k(t)}.
\end{equation}
As for instance in \cite[Section 4.2]{MP}, we consider, for $\eta < 1$, a radial function $\text{ln}_{\eta}$ of class $C^{\infty}(\R^2)$, satisfying the three following properties:
\begin{equation*}
\text{ln}_{\eta}(x) = \text{ln}(|x|) \; \text{for } |x|>\eta, \
|\text{ln}_{\eta} (x)|  \leq |\text{ln}(|x|) | \; \text{for } x\in \R^2 \ \text{ and } \
\left| \nabla \text{ln}_{\eta} (x) \right| \leq |x|^{-1}\; \text{for } x\in \R^2. 	
\end{equation*}
Then we define the regularized Biot-Savart Kernel $K_{\eta} = \nabla^{\perp} \text{ln}_{\eta} $ of $C^{\infty}(\R^2)$, satisfying $K_{\eta}(x) = K(x)$ for $|x| > \eta$, and globally Lipschitz on $\R^2$. 
This allows to define regularized vector fields $F^\eta_{n,i}$ and $\overline{F}^\eta_i$ by
\begin{equation*}
F^\eta_{\bm{x^n},i}(t,\cdot) := \sum_{j \neq i} \gamma_j K_{\eta}* \delta_{x^n_j(t)} + \gamma^c K_{\eta}* \delta_{z_n(t)}
\text{ and } 
\overline{F}^\eta_{\bm{\bar{x}},i}(t,\cdot) :=  \sum_{j \neq i} \gamma_j K_{\eta}* \delta_{\overline{x}_j(t)} + \frac{\gamma^c}{N} \sum_{k=1}^N K_{\eta}* \delta_{y_k(t)}.
\end{equation*}
Note that $F^\eta_{\bm{x^n},i}$ and $\overline{F}^\eta_{\bm{\bar{x}},i}$ are continuous in time and globally Lipschitz in space and hence their flows are global.
Assuming that Hypothesis H\ref{h.hyp}($\tau$,$n_1$) holds, then for
\begin{equation*} 
\eta: = \min(\overline{r}/4,1/2),
\end{equation*}
and $n \geq n_1$, the solution associated with $F^\eta_{\bm{x^n},i}$ coincides with the solution of \eqref{e.ODEzn}. Then we can replace $K$ with $K_{\eta}$ in \eqref{e.ODExav}.  \par
Now for $j \in\{1,\ldots,N\}$ and $t$ in $[0,\tau]$, we let $\phi^j_{t}$ the following vector-valued function
\begin{equation*} 
\phi^j_{t}(s,y) := \mathds{1}_{[0,t[}(s) \, K_{\eta}(\overline{x}_{j}(s)-y).
\end{equation*}
Clearly, $\phi^j _{t}$ belongs to $BV(0,T;C_0(\R^2)) \cap L^1(0,T;C_0^1(\R^2))$, is right-continuous and
the following relation holds:
\begin{equation*}
\int_0^{\tau} \left| \left\langle \delta_{z_n(s)}, \phi^j_{t}(s,\cdot) \right\rangle 
- \left\langle \overline{\delta}_{(s)},\phi^j_{t}(s,\cdot) \right\rangle \right| \dd s 
=  \int_0^{t} |K_{\eta}*(\delta_{z_n(s)} - \overline{\delta}_{(s)})|(\overline{x}_{j}(s))  \dd s.
\end{equation*}
Hence with Lemma~\ref{t.weakconv} we deduce the existence of $C>0$ depending on $\eta$ but independent of  $j$, $\tau$ and $t$ such that the following holds for all $n \geq n_1$:
\begin{equation} \label{e.ineponct}
\int_0^{t} |K *(\delta_{z_n(s)} - \overline{\delta}_{(s)})|(\overline{x}_{j}(s)) \dd s \leq \frac{C}{n}.
\end{equation}
Now we consider the distance between the trajectory $x_i^n$ and its reference trajectory $\bar{x}_i$, for $i$ in $\{1, \dots, N \}$,
\begin{equation*}
\begin{split}
|x^n_i - \overline{x}_i|(t) & \leq \left| \int_0^t \left(F_{\bm{x^n},i}(s,\overline{x}_{i}(s))-\overline{F}_{\bm{\bar{x}},i}(s,\overline{x}_{i}(s)) \right)\dd s \right| + \left|\int_0^t \left(F_{\bm{x^n},i}(s,x^n_{i}(s))-F_{\bm{x^n},i}(s,\overline{x}_{i}(s))\right)\dd s \right|.
\end{split}
\end{equation*}
For the first term in the right-hand side, recalling the relation $\left| \frac{1}{z}-\frac{1}{z^{\prime}} \right| = \frac{|z-z^{\prime}|}{|zz^{\prime}|}$ and \eqref{e.ymin}, we have
\begin{equation} \label{inversionterm}
\sum_{j \neq i} K *\left( \delta_{x_j^n(t)} - \delta_{\bar{x}_j(t)} \right) (\bar{x}_i(t)) \leq \sum_{j\neq i } \frac{|x_j^n- \bar{x}_j|}{|x_j^n-\bar{x}_i| |\bar{x}_i - \bar{x}_j|} \leq \frac{1}{(4\bar{r})^2}\sum_{j\neq i}  |x_j^n- \bar{x}_j|.
\end{equation}
Under Hypothesis H\ref{h.hyp}, $F_{\bm{x^n},i}^{\eta}$ is globally $L$-Lipschitz in space, with $L$ depending on $\eta$, hence on $\overline{r}$. Defining $\tilde{L} := L + \frac{1}{(4 \bar{r})^2}$ and with \eqref{inversionterm}, one has for $n \in \N^*$, $t \in [0,\tau]$ and $i \in \{1,\ldots,N\}$,
\begin{equation*}
|x_i^n-\bar{x}_i |(t) \leq |\gamma^c| \tilde{L} \sum_{j \neq i} \int_0^t |x_j^n-\bar{x}_j |(s) \dd s + |\gamma^c| \int_0^t  \left| K*(\delta_{z_n(s)}-\overline{\delta}_{(s)})(\overline{x}_j(s)) \right| \dd s.
\end{equation*}
According to Grönwall's lemma, this leads to
\begin{equation*}
|x^n_i - \overline{x}_i|(t)  \leq |\gamma^c| e^{\tilde{L} T} \sum_{j=1}^N \int_0^t |K*(\delta_{z_n(s)}-\overline{\delta}_{(s)}) \, (\overline{x}_{j}(s))| \dd s .
\end{equation*}
With \eqref{e.ineponct}, we conclude that for some $\tilde{C}$ independent of $\tau$, \eqref{e.xnconvergence} holds for all $n \geq n_1$. 
This ends the proof of Proposition~\ref{t.xnconvergence}.
\end{proof} 
\subsubsection{Proof of Theorem~\ref{t.approcontrol}}
\begin{proof}[Proof of Theorem~\ref{t.approcontrol}]
Establishing Theorem~\ref{t.approcontrol} amounts to showing that Hypothesis H\ref{h.hyp}($\tau$,$n_{1}$) holds for $\tau = T$ and $n_{1}$ large enough; as outlined before, we proceed by continuous induction. \par
For $n$ in $\N^*$, we define
\begin{multline*}
T_n :=\sup \left\{ \tau \in [0,T] \ \Big| \ \forall t \in[0,\tau], \ \forall i \in \{1,\ldots,N\},\ \forall i \neq j, \right. \\
 \left. |z_n(t) - x^n_i(t)|> \overline{r}/2 \ \text{ and } \ |x^n_j(t) - x^n_i(t)|> \overline{r}/2 \right\}.
\end{multline*}
Thanks to \eqref{e.ymin} and according to the continuity of $z_n$ and $\bm{x}^n$, each $T_{n}$ is positive for $n \in \N^*$. \par
\ \par
\noindent
{\bf 1.} The first step of this proof consists in showing the existence of $\underline{T}>0$ such that for all $n \geq n_{0}$ (where $n_{0}$ was introduced in Lemma~\ref{t.wconnection}), $T_{n} \geq	\underline{T}$.\par
Let $n \geq n_{0}$. It is straightforward that $\bm{x^n}$ is well-defined on $[0,T_n]$. Relying on \eqref{e.ODEzn}, we have that for $t \in [0,T_{n}]$, 
\begin{equation*}
\left| \frac{d x_i^n(t)}{dt} \right | \leq \frac{2}{\overline{r}} \left(|\gamma^c| +  \sum_{j=1}^N |\gamma_j| \right).  
\end{equation*}
Thus setting 
\begin{equation*}
\underline{T}_1 := \displaystyle \frac{\overline{r}^2}{\displaystyle 16\left(|\gamma^c| +  \sum_{j=1}^N |\gamma_j| \right)}
\end{equation*}
we find that for $t \in [0,\underline{T}_1] \cap [0,T_{n}]$ and $i \in \{1,\ldots,N\}$,
\begin{equation*}
|x^n_i(t) - x_i(0) | \leq \frac{\overline{r}}{8}.
\end{equation*}
Consequently, for $i \neq j$, recalling that $\overline{\bm x}(0)={\bm x}(0)$ so that \eqref{e.ymin} gives $|x_{i}(0) - x_{j}(0)| \geq 4\overline{r}$, we have
\begin{equation*}
|x^n_j(t) - x^n_i(t) | \geq \frac{15\overline{r}}{4}.
\end{equation*}
Concerning the reference solution $\overline{\bm x}$, by a continuity argument, there exists $\underline{T}_2 >0$ such that for all $t  \in [0, \underline{T}_2]$, for all $i \in \{1, \ldots, N\}$, $|\overline{x}_i(t) - \overline{x}_i(0) | \leq \overline{r}/8$. 
Using \eqref{e.wconnection} we deduce that on $[0,\underline{T}_2]$, for $n \geq n_0$, 
\begin{equation*}
|z_n(t)- x^n_i(t)| \geq |z_n(t)- \overline{x}_i(t)| - |x^n_{i}(t)- x^n_{i}(0)| - |\overline{x}_i(t) -\overline{x}_i(0)|
\geq 
\frac{3\overline{r}}{4}.
\end{equation*}
Hence by an immediate contradiction argument, we obtain that $T_{n} \geq \underline{T}:=\min(\underline{T}_1, \underline{T}_2)$. \par 
\ \par
\noindent
{\bf 2.} Now let us show that $T_{n}=T$ for $n$ sufficiently large. We define
\begin{equation*} 
\tilde{T}:= \inf_{n \geq n_{0}} T_{n}.
\end{equation*}
We know $\tilde{T}>0$; let us show that $\tilde{T}=T$. By definition of $T_{n}$, we see that the hypothesis H\ref{h.hyp}($\tilde{T}$,$n_{0}$) is fulfilled. In particular we deduce a constant $\tilde{C}$ according to Proposition~\ref{t.xnconvergence}. Now we define $n'_{0}$ as
\begin{equation*} 
n'_{0}:=\max\left( n_{0}, \left\lceil \frac{4\tilde{C}}{\overline{r}} \right\rceil  \right).
\end{equation*}
We claim that for all $n \geq n'_{0}$, $T_{n}=T$. Indeed, applying Proposition~\ref{t.xnconvergence} we find that for all $n \geq n'_{0}$,
\begin{equation*} 
\max_{t \in [0,\tilde{T}]} |x^n_i(t) - \overline{x}_i(t)| \leq \frac{\overline{r}}{4}.
\end{equation*}
With \eqref{e.ymin} and Lemma~\ref{t.wconnection}, this involves that for $t \in [0,\tilde{T}]$, $i \in \{1,\ldots,N\}$, $j \neq i$,
\begin{equation*} 
|x^n_i(t) - z_n(t)| \geq \frac{3\overline{r}}{4} \ \text{ and } \ |x^n_i(t) - x^n_j(t)| \geq \frac{3\overline{r}}{4}.
\end{equation*}
If we had $\tilde{T} < T$, we could find a $n$ contradicting one of these inequalities at time $t=T_{n}$. \par
It follows that for $n \geq n'_{0}$, the solution ${\bm x}^n$ is defined globally in $[0,T]$. Moreover H\ref{h.hyp}($T$,$n'_{0}$) is true. Hence \eqref{Eq:xxbarre} follows from Proposition~\ref{t.xnconvergence}. \par
This ends the proof of Theorem~\ref{t.approcontrol}.
\end{proof}
\subsubsection{Proof of Lemma~\ref{t.weakconv}}
\label{sub.weakconv}
Recalling the definition of $z_{n}^0$ in \eqref{e.control0}, we write
\begin{multline} \nonumber
\int_0^T \left| \langle \delta_{z_n(t)},\phi(t,\cdot) \rangle - \langle \overline{\delta}_{(t)},\phi(t,\cdot) \rangle \right| \dd t
\leq
\int_0^T | \left\langle \delta_{z_n(t)},\phi(t,\cdot) \right\rangle - \left\langle \delta_{z_n^0(t)},\phi(t,\cdot) \right\rangle | \dd t 
\\ +
 \int_0^T | \left\langle \delta_{z_n^0(t)},\phi(t,\cdot) \right\rangle - \left\langle \overline{\delta}_{(t)},\phi(t,\cdot) \right\rangle | \dd t,
\end{multline}	
and estimate the two terms in the right-hand side separately. \par
\ \par 
\noindent
$\bullet$ For the first term, we use \eqref{e.speed} and deduce that we have
\begin{align*}
 \int_0^T | \left\langle \delta_{z_n(t)},\phi(t,\cdot) \right\rangle - \left\langle \delta_{z_n^0(t)},\phi(t,\cdot) \right\rangle | \dd t & \leq \sup_{t\in [0,T]} |z_n(t) - z_n^0(t)| \int_0^T | \phi(t,\cdot)|_{C_0^1(\R^2)} \dd t  \\
& \leq \frac{2C^{\pi}N}{nT} \sup_{k \in \{1,\dots,N\}} \|y_k\|_{C^1([0,T])} \|\phi \|_{\text{L}^1(0,T,C^1_0(\R^2))} .
\end{align*}
\ \par 
\noindent
$\bullet$ The main part concerns the second term. We set for $j \in \{1,\ldots,N\}$, 
\begin{equation*}
\tilde{z}_n^j(t):= \sum_{i=1}^n \sum_{k=1}^N \mathds{1}_{I_{i,k}}(t) y_{k+j}(t),
\end{equation*}
with the convention that $k+j = k+j-N$ if $k+j > N$. 
We extend $\phi$ for all times by setting $\phi(t,\cdot) = \phi(0,\cdot)$ for $t\leq 0$  $\phi(t,\cdot) = \phi(T,\cdot)$ for $t\geq T$.
We claim the following.
\begin{Lemm} \label{Lem:zztilde}
For some constant $C>0$ depending on $y_{k}$, we have
\begin{equation*}
\int_0^T \left| \left\langle \delta_{z_n(t)},\phi(t,\cdot) \right\rangle \dd t - \frac{1}{N} \sum_{j=1}^N \left\langle \delta_{\tilde{z}_n^j(t)},\phi(t,\cdot) \right\rangle \right| \dd t \leq \frac{C}{n} \left(\|\phi\|_{BV(0,T;C_0(\R^2))}  + \|\phi \|_{\text{L}^1(0,T,C^1_0(\R^2))} \right).
\end{equation*}
\end{Lemm}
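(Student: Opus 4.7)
The key observation is that the cyclic permutations $\tilde z_n^j$ realise the full set of reference positions pointwise in time: for $t\in I_{i,k}$ the definition gives $\tilde z_n^j(t)=y_{k+j}(t)$ with indices understood modulo $N$, and as $j$ runs over $\{1,\ldots,N\}$ the index $k+j$ traces out a permutation of $\{1,\ldots,N\}$. This yields the identity
\begin{equation*}
\frac{1}{N}\sum_{j=1}^N \delta_{\tilde z_n^j(t)} \,=\, \overline{\delta}_{(t)} \qquad \text{for a.e.\ } t\in[0,T],
\end{equation*}
so the left-hand side of Lemma~\ref{Lem:zztilde} equals $\int_0^T \bigl|\phi(t,z_n(t)) - \langle\overline{\delta}_{(t)},\phi(t,\cdot)\rangle\bigr|\,\dd t$, and the task reduces to showing that the fast oscillation of $z_n$ realises the spatial mean $\overline{\delta}_{(t)}$ in a time-averaged sense at rate $1/n$.

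My plan is first to replace $z_n$ by $z_n^0$ using \eqref{e.speed} and the spatial $C^1_0$-regularity of $\phi$, at the cost of $(C/n)\|\phi\|_{L^1(0,T;C^1_0)}$. Then I would split $[0,T]=\bigsqcup_{i=1}^n I_i$ and, on each $I_i$, decompose further into core and transition subintervals. The transition subintervals $I_{i,k}^w$ have total measure $T/n^2$, hence contribute at most $(C/n)\|\phi\|_{BV(0,T;C_0)}$. On the core subintervals $z_n^0=y_k$, so the remaining contribution from $I_i$ reduces to $\sum_{k=1}^N A_{i,k}$ with
\begin{equation*}
A_{i,k}\,:=\,\int_{I_{i,k}}\phi(t,y_k(t))\,\dd t \,-\, \frac{1}{N}\int_{I_i}\phi(t,y_k(t))\,\dd t,
\end{equation*}
which is a one-shot Riemann-sum error comparing the mean of $\phi(\cdot,y_k(\cdot))$ over a subinterval of length $\delta T$ to its mean over $I_i$ of length $\Delta T=N\delta T$.

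I would control each $A_{i,k}$ by picking a reference time $s_i\in I_i$ and using
$\phi(t,y_k(t))-\phi(s_i,y_k(s_i)) = [\phi(t,\cdot)-\phi(s_i,\cdot)](y_k(t)) + [\phi(s_i,y_k(t))-\phi(s_i,y_k(s_i))].$
The first piece is bounded in $C_0$ by the temporal oscillation $\omega_i(\phi):=\sup_{t\in I_i}\|\phi(t,\cdot)-\phi(s_i,\cdot)\|_{C_0}$, the second by $\|\nabla\phi(s_i,\cdot)\|_{C_0}\cdot\|y_k\|_{C^1}\cdot\Delta T$. Summing $|A_{i,k}|$ over $k$ and then $i$, and choosing $s_i$ so that $\|\nabla\phi(s_i,\cdot)\|_{C_0}\leq \Delta T^{-1}\int_{I_i}\|\nabla\phi(t,\cdot)\|_{C_0}\,\dd t$, yields a bound of order $(C/n)\bigl(|\phi|_{BV(0,T;C_0)}+\sup_k\|y_k\|_{C^1}\|\phi\|_{L^1(0,T;C^1_0)}\bigr)$, after using $\sum_i\omega_i(\phi)\leq|\phi|_{BV(0,T;C_0)}$ and $\Delta T=T/n$. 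Combined with the $z_n$--$z_n^0$ substitution and the transition-interval contribution, this yields the claimed estimate.

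The main technical obstacle will be this BV-telescoping step, specifically the selection of the reference points $s_i\in I_i$ so that the oscillations $\omega_i(\phi)$ sum to the full seminorm $|\phi|_{BV(0,T;C_0)}$. For $\phi$ merely of bounded variation (and not continuous) in time, this is standard but delicate: it requires passing to a right-continuous representative and, for each $i$, choosing $s_i$ and a near-supremum point $t_i^\star \in I_i$ so that the points $\{s_i,t_i^\star\}_i$, once ordered, form an admissible partition of $[0,T]$. Everything else in the argument only uses the smallness $\Delta T=T/n$, the uniform $C^1$-bounds on $(y_k)$, and the $O(1/n^2)$ total measure of the transition intervals.
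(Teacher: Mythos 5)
Your overall strategy is sound and rests on the same three ingredients as the paper's proof (the $BV$-in-time seminorm to absorb the temporal oscillation over each $I_i$, the spatial $C^1_0$-norm together with $\|y_k\|_{C^1}$ for the displacement of the evaluation points, and the $O(1/n)$ total measure of the transition intervals plus \eqref{e.speed} for the remaining errors), but your cancellation mechanism is genuinely different. The paper fixes $j$, performs the change of variables $s=t+j\delta T$ so that $\tilde z_n^j$ becomes a time-translate of $z_n^0$, invokes the classical shift estimate $\frac{1}{\tau}\int|u(\cdot+\tau)-u(\cdot)|\le TV(u)$, and only averages over $j$ at the very end; you compare directly with $\overline{\delta}_{(t)}$ through a per-interval Riemann-sum cancellation around a reference time $s_i\in I_i$. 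Your route avoids the paper's boundary term $\mathcal{E}$ entirely, and the step you flag as delicate is resolvable: for each $i$ pick a pair $u_i<v_i$ in $I_i$ nearly achieving $\mathrm{osc}_{I_i}\,\phi$; these pairs order into a single admissible partition of $[0,T]$, giving $\sum_i\mathrm{osc}_{I_i}\,\phi\le 2|\phi|_{BV(0,T;C_0(\R^2))}$, after which any $s_i\in I_i$ (in particular one with $\|\nabla\phi(s_i,\cdot)\|_{C_0}\le\Delta T^{-1}\int_{I_i}\|\nabla\phi(t,\cdot)\|_{C_0}\dd t$) serves for the oscillation bound. A small slip: there are about $nN$ transition intervals, each of length $T/(n^2N)$, so their total measure is $T/n$, not $T/n^2$; the contribution is still $O(1/n)\|\phi\|_{L^\infty}\le O(1/n)\|\phi\|_{BV}$, so nothing breaks.

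The one point you must fix is where the absolute value sits. Your opening reduction declares the target to be $\int_0^T|\phi(t,z_n(t))-\langle\overline{\delta}_{(t)},\phi(t,\cdot)\rangle|\dd t$, with the modulus inside the time integral; this quantity is \emph{not} $O(1/n)$ in general (take $\phi$ time-independent and $y_1,y_2$ constant with $\phi(y_1)=1$, $\phi(y_2)=0$: the integrand equals $1/2$ on every core interval). Correspondingly, your own argument cannot reach it: the cancellation in $\sum_k A_{i,k}$ occurs only after integrating over $I_i$, so what you actually prove is $\sum_i\bigl|\int_{I_i}(\cdots)\dd t\bigr|\le C/n$, hence a bound on $\bigl|\int_0^T(\cdots)\dd t\bigr|$. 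This is a defect of your reading of the statement rather than of your estimates: the stray $\dd t$ inside the modulus indicates that the intended inequality carries the modulus outside the time integrals, which is also all the paper's own proof delivers (its change of variables $s=t+j\delta T$ is an identity of integrals, not of integrands, so \eqref{e.decompspeed} is likewise only valid in that averaged form) and all that the application requires (in \eqref{e.ineponct} and the Gr\"onwall step the modulus ultimately sits outside $\int_0^t$, the cutoff $\mathds{1}_{[0,t[}$ being built into the test function). State explicitly that you prove the averaged version --- your per-interval form is in fact exactly the right strength for the downstream use --- and your proof is complete.
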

\begin{proof}[Proof of Lemma~\ref{Lem:zztilde}]
Using the change of time variable $s = t + j\delta T$ and an index permutation, we can write

\begin{align*}
\int_0^T \left\langle \delta_{\tilde{z}_n^j(t)},\phi(t,\cdot) \right\rangle \dd t 
= &\int_0^T \sum_{i=1}^n \sum_{k=1}^N \mathds{1}_{I_{i,k}}(t) \phi(t,y_{k+j}(t)) \dd t \\
= &\int_0^T \sum_{i=1}^n \sum_{k=1}^N \mathds{1}_{I_{i,k}}(s) \phi(s-j\delta T,y_{k}(s-j\delta T)) \dd s 
+ \mathcal{E},
\end{align*}
where the term $\mathcal{E}$ comes from the errors at the boundaries $t=0$ and $t=T$ where the time change $s = t + j\delta T$ does not fit, and can easily be bounded as follows:
\begin{equation*} 
|\mathcal{E}| \leq \frac{T \|\phi\|_{L^\infty(0,T;C_{0}(\R^2))}}{n}.
\end{equation*}
We infer
\begin{multline} \label{e.decompspeed}
\int_0^T \left| \left\langle \delta_{z_n^0(t)},\phi(t,\cdot) \right\rangle  - \left\langle \delta_{\tilde{z}_n^j(t)},\phi(t,\cdot) \right\rangle \right| \dd t \\
\leq  \displaystyle \int_0^T \sum_{i=1}^n \sum_{k=1}^N \mathds{1}_{I_{i,k}}(t) \left| \phi(t, y_k(t)) - \phi\big(t-j\delta T, y_k(t-j\delta T)\big) \right|  \dd t \hspace{1.5cm} \\
+ \int_0^T \sum_{i=1}^n \sum_{k=1}^N \left( \mathds{1}_{(I_{i,k}\setminus I_{i,k}^c)} |\phi(t, y_k(t))|  + \mathds{1}_{I_{i,k}^w}(t) |\phi(t, w_k(t))|\right)  \dd t + \mathcal{E}.
\end{multline}
Let us now estimate of the first two terms in the right-hand side of \eqref{e.decompspeed}. \par
\ \par
\noindent
{\bf 1.} Concerning the first term, let us define for $t \in (0,T)$:
\begin{equation*}
g_n(t) := \displaystyle\sum_{i=1}^n \sum_{k=1}^N \mathds{1}_{I_{i,k}}(t) \left| \phi(t, y_k(t))- \phi(t-j\delta T, y_k(t)) \right|,
\end{equation*}
and 
\begin{equation*}
h_n(t) := \displaystyle\sum_{i=1}^n \sum_{k=1}^N \mathds{1}_{I_{i,k}}(t) \left| \phi(t-j\delta T, y_k(t))- \phi(t-j\delta T, y_k(t-j\delta T)) \right|.
\end{equation*}
As the intervals $I_{i,k}$ are disjoint, for $t$ in $(0,T)$ there exists some $k_t \in \{1,\ldots,N\}$ such that 
\begin{equation*}
g_n(t) = |\phi(t-j\delta T, y_{k_t}(t)) - \phi(t, y_{k_t}(t))| \leq  \|\phi(t-j\delta T, \cdot) - \phi(t, \cdot)\|_{C_{0}(\R^2)}.
\end{equation*}
We use the general property of functions of bounded variations (see e.g. \cite[Lemma 2.3]{Bressan:Livre}): for $u \in BV(\R;\R)$ and $\tau>0$,
\begin{equation*} 
\frac{1}{\tau} \int_{-\infty}^{+\infty} |u(x+\tau) - u(x)| \leq TV(u).
\end{equation*}
Hence  we deduce, recalling \eqref{deltaT},
\begin{equation*}
\int_0^T g_n(t) \dd t \leq j\delta T \ |\phi|_{BV(0,T;C_0(\R^2))}\leq \frac{T}{n}|\phi|_{BV(0,T;C_0(\R^2))}.
\end{equation*}
\par
\ \\
Moreover as $\phi(t, \cdot) \in C^1(\R^2)$ for almost all $t$ in $[0,T]$ and $y_k\in C^1([0,T])$ for $1\leq k\leq N$, we find
\begin{equation*}
h_n(t) \leq \frac{T}{n} \left(\sum_{k=1}^N \|y_k\|_{C^1([0,T])} \right) |\phi(t-j\delta T,.)|_{C^1_0(\R^2)},  
\end{equation*}
hence
\begin{equation*}
\int_0^T h_n(t) \dd t \leq \frac{T}{n} \left(\sum_{k=1}^N \|y_k\|_{C^1([0,T])} \right) \|\phi \|_{\text{L}^1(0,T;C^1_0(\R^2))}.
\end{equation*}
\ \par
\noindent
{\bf 2.} For what concerns the second term of \eqref{e.decompspeed}, we simply write 
\begin{multline*}
\int_0^T \sum_{i=1}^n \sum_{k=1}^N \left( \mathds{1}_{(I_{i,k}\setminus I_{i,k}^c)} |\phi(t, y_k(t))|  + \mathds{1}_{I_{i,k}^w}(t) |\phi(t, w_k(t))|\right)  \dd t \\
\leq  \|\phi\|_{\text{L}^{\infty}(0,T;C_0(\R^2))} \sum_{i=1}^n \sum_{k=1}^N \left( \lambda(I_{i,k}\setminus I_{i,k}^c)+ \lambda(I_{i,k}^w) \right)
\leq  \frac{2T}{n} \| \phi \|_{\text{L}^{\infty}(0,T;C_0(\R^2))},
\end{multline*}
with $\lambda(I)$ denoting the Lebesgue measure of the set $I$. \par
\ \par
\noindent
Gathering the inequalities above we obtain
\begin{equation*}
\displaystyle \int_0^T \left| \left\langle \delta_{z_n(t)},\phi(t,\cdot) \right\rangle -  \left\langle \delta_{\tilde{z}_n^j(t)},\phi(t,\cdot) \right\rangle \right| \dd t \leq \frac{C}{n},
\end{equation*}
for
\begin{multline*}
C:=  \displaystyle T\left( |\phi|_{BV(0,T;C_0(\R^2))} +  \| \phi \|_{\text{L}^1(0,T;C_0^1(\R^2))}\sum_{k=1}^N \|y_k\|_{C^1([0,T])} + 3 \| \phi \|_{\text{L}^{\infty}(0,T;C_0(\R^2))} \right) \\
 + \frac{2C^{\pi} N}{T} \sup_{k \in \{1,\dots,N\}} \|y_k\|_{C^1_0(\R^2)}  \|\phi \|_{\text{L}^1(0,T,C^1_0(\R^2))}.
\end{multline*}

Since this estimate holds for all $j$ in$\{1,\ldots,N\}$, this allows to establish Lemma~\ref{Lem:zztilde}.
\end{proof}
\noindent
To conclude the proof of Lemma~\ref{t.weakconv}, it remains to notice that 
\begin{equation*}
\frac{1}{N} \sum_{j=1}^N \int_0^T \left\langle \delta_{\tilde{z}_n^j(t)},\phi(t,\cdot) \right\rangle \dd t = \frac{1}{N} \sum_{j=1}^N \int_0^T \left\langle \delta_{y_j(t)},\phi(t,\cdot) \right\rangle \dd t 
= \int_0^T \left\langle \overline{\delta}_{(t)},\phi(t,\cdot) \right\rangle \dd t,
\end{equation*}
so \eqref{e.weakconv} follows. This ends the proof of Lemma~\ref{t.weakconv}.

\subsection{Exact controllability}
The goal of this subsection is to prove Theorem~\ref{Thm:Main}.

We will use a topological argument as in \cite{GlassRosier:2013} to pass from the approximate controllability to the exact one. This relies on the following lemma, see \cite[Lemma 4.1]{GlassRosier:2013}.
\begin{Lemm} \label{t.topo}
Let $\bm{w_0} \in (\R^2)^N$, $\kappa >0$, $f: \overline{B}(\bm{w_0},\kappa) \longrightarrow (\R^2)^N$ a continuous map such that we have $||f(\bm{w})-\bm{w}\| \leq \kappa/2$ for any $\bm{w}$ in $\partial B(\bm{w_0},\kappa)$. Then $B(\bm{w_0},\kappa/2) \subset f(\overline{B}(\bm{w_0},\kappa))$.
\end{Lemm}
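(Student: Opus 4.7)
The plan is to deduce this via the homotopy invariance of Brouwer's degree in $\R^{2N} \cong (\R^2)^N$. Fix any $\bm{w_1} \in B(\bm{w_0}, \kappa/2)$; the goal is to exhibit $\bm{w} \in \overline{B}(\bm{w_0}, \kappa)$ with $f(\bm{w}) = \bm{w_1}$. I would work with the affine homotopy
\begin{equation*}
H: [0,1] \times \overline{B}(\bm{w_0}, \kappa) \longrightarrow (\R^2)^N, \quad H(s, \bm{w}) := (1-s)(\bm{w} - \bm{w_0}) + s\bigl(f(\bm{w}) - \bm{w_1}\bigr),
\end{equation*}
which continuously interpolates between the translation $\bm{w} \mapsto \bm{w} - \bm{w_0}$ at $s=0$ and $\bm{w} \mapsto f(\bm{w}) - \bm{w_1}$ at $s=1$.

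The key step is to verify that $H$ does not vanish on $[0,1] \times \partial B(\bm{w_0}, \kappa)$. Rearranging, one has
\begin{equation*}
H(s, \bm{w}) = (\bm{w} - \bm{w_0}) - s\bigl[(\bm{w} - f(\bm{w})) + (\bm{w_1} - \bm{w_0})\bigr],
\end{equation*}
so the reverse triangle inequality, combined with the hypothesis $\|f(\bm{w}) - \bm{w}\| \leq \kappa/2$ on $\partial B(\bm{w_0}, \kappa)$ and the strict inequality $\|\bm{w_1} - \bm{w_0}\| < \kappa/2$, yields
\begin{equation*}
\|H(s, \bm{w})\| \geq \kappa - s\bigl(\|f(\bm{w}) - \bm{w}\| + \|\bm{w_1} - \bm{w_0}\|\bigr) \geq \kappa - \bigl(\kappa/2 + \|\bm{w_1} - \bm{w_0}\|\bigr) > 0,
\end{equation*}
for every $s \in [0,1]$ and every $\bm{w} \in \partial B(\bm{w_0}, \kappa)$.

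Once this non-vanishing property is in hand, homotopy invariance of the Brouwer degree relative to $0$ gives
\begin{equation*}
\deg\bigl(f(\cdot) - \bm{w_1},\, B(\bm{w_0}, \kappa),\, 0\bigr) = \deg\bigl(\mathrm{Id} - \bm{w_0},\, B(\bm{w_0}, \kappa),\, 0\bigr) = 1,
\end{equation*}
the right-hand side being the degree of a translation, with unique preimage $\bm{w_0}$ and trivially nondegenerate Jacobian. The existence property of a nonzero-degree map then guarantees some $\bm{w} \in B(\bm{w_0}, \kappa)$ with $f(\bm{w}) - \bm{w_1} = 0$, which is the desired conclusion.

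No step here is genuinely difficult: the only point that requires a little care is the decomposition that isolates $f(\bm{w}) - \bm{w}$ and $\bm{w_1} - \bm{w_0}$ in the bound for $\|H(s, \bm{w})\|$, so that both hypotheses of the lemma can be applied simultaneously on $\partial B(\bm{w_0}, \kappa)$ with combined slack strictly less than $\kappa$. An entirely equivalent and macro-free alternative, if one wishes to avoid invoking degree theory, is to argue by contradiction: if some $\bm{w_1} \in B(\bm{w_0}, \kappa/2)$ were not in the image, then $\bm{w} \mapsto \kappa \cdot (f(\bm{w}) - \bm{w_1})/\|f(\bm{w}) - \bm{w_1}\| + \bm{w_0}$ would be a continuous map $\overline{B}(\bm{w_0}, \kappa) \to \partial B(\bm{w_0}, \kappa)$, homotopic (via the same $H$ normalized radially) on the boundary to the identity of $\partial B(\bm{w_0}, \kappa)$, contradicting Brouwer's no-retraction theorem.
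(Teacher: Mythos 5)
Your proof is correct: the boundary estimate $\|H(s,\bm{w})\|\geq \kappa-\bigl(\kappa/2+\|\bm{w_1}-\bm{w_0}\|\bigr)>0$ is valid, and the degree (or no-retraction) argument then yields a preimage of every $\bm{w_1}\in B(\bm{w_0},\kappa/2)$. The paper does not prove this lemma itself but cites \cite[Lemma 4.1]{GlassRosier:2013}, whose proof is exactly this standard homotopy-invariance-of-degree argument, so your approach matches the intended one.
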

Now we detail the proof of Theorem~\ref{Thm:Main}, relying on Lemma~\ref{t.topo}.
\begin{proof}[Proof of Theorem~\ref{Thm:Main}] The proof is divided in several steps. \par
\ \par 
\noindent
\textbf{1. Reduction to particular settings.}
Due to the approximate controllability result Theorem~\ref{t.approcontrol}, we can reduce the global exact controllability problem to a local one, namely when the initial and final positions of the vortices are in the situation described in Theorem~\ref{Var:Nvortex}. To be more precise, with $\bm{x_{f}}$ given, we introduce $D>0$, $\tau>0$ (arbitrarily small) and $\rho>0$ so that the result of Theorem~\ref{Var:Nvortex} applies. Now we use the approximate controllability Theorem~\ref{t.approcontrol} during a first step (of duration $T-\tau$) to bring the vortices close to their target, in such a way in particular that at time $T-\tau$, each vortex $x_{i}$ is in $\overline{B}(x_{f,i},D) \setminus \overline{B}(x_{f,i},D/2)$. \par
Once we are in this situation, we can begin a second phase (of duration $\tau$), where we again call $x_{0,1},\ldots,x_{0,N}$ the initial vortex positions, now in $\big(\overline{B}(x_{f,1},D) \setminus \overline{B}(x_{f,1},D/2)\big) \times \cdots \times \big(\overline{B}(x_{f,i},D) \setminus \overline{B}(x_{f,N},D/2)\big)$.
According to Theorem~\ref{Var:Nvortex}, we obtain the $N$ reference control trajectories $y_i$, $i \in \{1,\dots, N\}$ and the corresponding straight-lined reference solutions $\overline{x}_{i}$,  $i \in \{1,\dots, N\}$ of \eqref{e.odeN}, which belong to ${\mathcal Y}_{i}$ and ${\mathcal X}_{i}$, respectively.
Moreover we can extend the construction to the case where we replace ${\bm x}_{f}$ by any other point $\tilde{\bm x}_{f}$ in $B(x_{f,1},\rho) \times \cdots \times B(x_{f,N},\rho)$. \par
\ \par
\ \par 
\noindent
\textbf{2. Specific form of the transition curves.} 
Now we mainly follow the lines of the construction of Section~\ref{s.defcontrol} but modify the transition curves $w_{i,k}^n$ appearing in the construction of the oscillating control in \eqref{e.control0}, in order to make the construction continuous with respect to ${\bm x}_{f}$; this is easier in the present situation since we do not have to avoid the moving balls as in Figure~\ref{f.wtrajectory}. \par
To begin with, we choose some reference points $y_{i}^\star \in {\mathcal Y}_{i}$, $i \in \{1,\ldots,N\}$.
As the sets ${\mathcal X}_{i}$, ${\mathcal Y}_{j}$ are all disjoint for $i,j \in \{1,\dots, N\}$, according to an argument of path-connectedness of the plane deprived of disjoint convex compact sets, there exists a set of non crossing paths $\mathcal{C}_{k,k+1}$, $k \in \{1,\ldots,N\}$, such that $\mathcal{C}_{k,k+1}$ connects $y_{k}^\star$ to $y_{k+1}^\star$ (where $y_{N+1}^\star:=y_{1}^\star$) and avoids all other sets ${\mathcal X}_{i}$ and ${\mathcal Y}_{j}$. 
Then one constructs a Lipschitz transition curve $w_{i,k}^n$ the following way: straight from $y_{k}(t_{i-1,k}-\frac{\delta T}{2n})$ to $y_k^\star$, then following $\mathcal{C}_{k,k+1}$ between $y_k^\star$ and $y_{k+1}^\star$, and finally straight from $y_{k+1}^\star$ to $y_{k+1}(t_{i-1,k}+\frac{\delta T}{2n})$; this will be regularized as in Section~\ref{s.defcontrol}. \par
A particular feature of this construction is that the oscillating control $z_n$ defined by \eqref{e.control} now depends continuously on the objective final point $\tilde{\bm x}_f$ for all $\tilde{\bm x}_f \in \overline{B}({x}_{f,1},\rho) \times \cdots \times \overline{B}({x}_{f,N},\rho)$. \par 
\ \par 
\noindent
\textbf{3. Application of Lemma~\ref{t.topo}.}
Now we use the lines $\overline{x}_1,\dots, \overline{x}_N$ as reference trajectories; we find a constant $\overline{r}>0$ as in \eqref{e.ymin}, uniformly valid for any final point $\tilde{\bm x}_f \in \overline{B}({x}_{f,1},\rho) \times \cdots \times \overline{B}({x}_{f,N},\rho)$. From this constant $\overline{r}$, we deduce a constant $C>0$ and a rank $n'_{0} \in \N^*$ as in Theorem~\ref{t.approcontrol}. \par
Now we let
\begin{equation} \label{Constantes}
\kappa := \rho \ \text{ and } \ n_{\kappa} := \max \left( \left\lceil \frac{2 C}{\kappa}  \right\rceil, n'_0 \right)
\end{equation}
We define the following mapping 
\begin{equation}
f: 
\left\{ \begin{array}{r l}
  \tilde{\bm x}_f \in \overline{B}({x}_{f,1},\rho) \times \cdots \times \overline{B}({x}_{f,N},\rho)  \longrightarrow  & (\R^2)^N \\
   \tilde{\bm{x}}_f  \longmapsto &  \bm{x}^{n_{\kappa}}(T),
\end{array} \right.
\end{equation}
with $\bm{x}^{n_{\kappa}}(T)$ the final point of the trajectory controlled by $z_{n_{\kappa}}$ starting from ${{\bm x}}_{0}$. \par
Due to \eqref{Eq:xxbarre} and \eqref{Constantes}, we have $\|f(\tilde{\bm{x}}_f) - \tilde{\bm{x}}_f \| \leq \kappa/2$ for all $\tilde{\bm{x}}_f \in \overline{B}({x}_{f,1},\rho) \times \cdots \times \overline{B}({x}_{f,N},\rho)$. Moreover $f$ is continuous, as $z_{n_{\kappa}}$ depends continuously on the trajectories $y_k$ for $k \in \{1,\ldots,N\}$ according to the specific construction detailed above, which are continuously constructed from $\bm{x_f}$ since they are merely straight lines. \par
Hence with Lemma~\ref{t.topo}, we deduce that all points in $B(\bm{x}_f,\kappa/2)$ have a pre-image by $f$. Thus targeting $\bm{\tilde{x}_f}= f^{-1}(\bm{x_f})$, the trajectory $\bm{x}$ solution of \eqref{e.ode1control} reaches $\bm{x_f}$ in time $T$. \par
This ends the proof of Theorem~\ref{Thm:Main}.
\end{proof}
%
%
%
%
%
%
%
%
%
%
%
%

\end{document}